%
%
%
%
\documentclass[a4paper,11pt,reqno]{amsart}

\usepackage{amsthm, mathrsfs,amssymb,amsmath,}
\usepackage{enumerate}
\usepackage{mathtools}
\makeatletter
\@namedef{subjclassname@2010}{%
	\textup{2010} Mathematics Subject Classification}
\makeatother

\frenchspacing

\setlength{\textwidth}{16.0cm}
\setlength{\textheight}{24.7cm}
\setlength{\topmargin}{-2mm}
\setlength{\oddsidemargin}{3mm}
\setlength{\evensidemargin}{3mm}
\setlength{\footskip}{1cm}
\usepackage{hyperref}
\allowdisplaybreaks
\numberwithin{equation}{section}

\usepackage{graphicx}
\usepackage{enumitem}

\newtheorem{theorem}{Theorem}[section]
\newtheorem{lemma}[theorem]{Lemma}

\theoremstyle{definition}
\newtheorem{definition}[theorem]{Definition}
\newtheorem{example}[theorem]{Example}

\newtheorem{proposition}[theorem]{Proposition}
\theoremstyle{remark}
\newtheorem{remark}[theorem]{Remark}
\newtheorem{corollary}[theorem]{Corollary}

\numberwithin{equation}{section}


\usepackage{fancyhdr}
\pagestyle{fancy}
\lhead{}
\rhead{Fractal dimensions and Quantization dimension}
\begin{document}
	
	\title{ Fractal dimensions of fractal transformations and Quantization dimensions for bi-Lipschitz mappings}
	

	
	\author{Manuj Verma}
	\address{Department of Mathematics,  Indian Institute of Technology Delhi, New Delhi, India 110016}
	\email{mathmanuj@gmail.com}
	\author{Amit Priyadarshi}
	\address{Department of Mathematics, Indian Institute of Technology Delhi, New Delhi, India 110016}
	
	\email{priyadarshi@maths.iitd.ac.in}
	\author{Saurabh Verma}
	\address{Department of Applied Sciences,  Indian Institute of information Technology Allahabad, Prayagraj, India 211015 }
	\email{saurabhverma@iiita.ac.in}
	
	
	\date{}
	\subjclass{Primary 28A80}

	\keywords{Quantization dimension, Hausdorff dimension, Box dimension, Fractal transformation, Iterated function systems, Probability measures}
	\begin{abstract}
		In this paper, we study the fractal dimension of the graph of a fractal transformation and also determine the quantization dimension of a probability measure supported on the graph of the fractal transformation. Moreover, we estimate the quantization dimension of the invariant
		measures corresponding to a weighted iterated function system consisting of bi-Lipschitz mappings under the strong open set condition.
	\end{abstract}
	
	\maketitle

	
	
	\section{INTRODUCTION}

	In Fractal Geometry, Iterated Function Systems (IFSs) play an important role. They are commonly used to generate fractals. In fact, most of the fractals are the attractors of some IFS. There are many literature available on the study of IFS, its attractor and fractal dimension of attractor, see, for more details, \cite{Fal,MF2,JH,Nussbaum1,AS}. In 2009, Barnsley \cite{MF7} introduced the idea of fractal transformation. Basically, fractal transformation is a map between the attractor of one IFS to the attractor of another IFS. After that, Barnsley et al. \cite{Igu} discussed many applications of fractal transformation.
	In 2014, Barnsely and his collaborators \cite{MF1} determined some conditions under which fractal transformation is measure preserving.  In 2016, Bandt et al. \cite{Bandt} proved that under some conditions fractal transformation becomes homeomorphism. In 2018, Vince \cite{And} showed that we can extend fractal transformation from the non-empty attractor to whole space and described that under some conditions fractal transformation is the attractor of some IFS, which is constructed from the given IFSs. 
	\par
	The quantization dimension is one of the most important thing in the quantization theory.  In 1963, Zador \cite{ZADOR} was the  first, who introduced the term quantization dimension and also discussed some properties of this dimension. In 2002, Graf and Luschgy \cite{GL2} gave a formula of the quantization dimension of the self-similar measures under the open set condition (OSC). After that, Lindsay et al. \cite{Lindsay} generalized the result of Graf and Luschgy \cite{GL1} for the self-conformal measures and described some connection between quantization theory and fractal geometry. The quantization dimension is also related to some other known fractal dimensions like the Hausdorff dimension and the box-counting  dimension, see, for more details \cite{GL1}. In 2010, Roychowdhury \cite{R1} obtained the quantization dimension of the Moran measures. After that, Roychowdhury \cite{R2} also determined the quantization dimension of a Borel probability measure supported on the attractor of the bi-Lipschitz IFS by taking some conditions on the bi-Lipschitz constant and the IFS satisfying SOSC. In 2021, Roychowdhury and Selmi \cite{R4} estimated bounds of the quantization dimension of the invariant measure generated by the hyperbolic recurrent IFS under the strong separation condition. In this paper, we determine bounds of the quantization dimension of the invariant Borel probability measures supported on the attractor of a general class of bi-Lipschitz IFSs under the SOSC. This result also generalizes the result of Graf and Luschgy \cite{GL1}. We also estimate the quantization dimension for the invariant Borel probability measures supported on the graph of the fractal transformation. Here, we discuss some dimensional results for the graph of the fractal transformation. 
	
	\par
	The paper is organized as follows. In upcoming Section \ref{se2}, we discuss some preliminary results and the required definitions for forthcoming section. In Section \ref{se3}, we give some results on the fractal transformation and the product of two IFSs. Firstly, we determine some results related to the product of two IFSs. Next, we obtain the bounds of the Hausdorff dimension of the graph of the fractal transformation under some conditions. After that, we determine a relation between invariant measures of two IFS and its product IFS. We also give a relation the between the quantization dimension of these invariant measures. In the last section, we provide the bounds of the quantization dimension of the invariant probability measures  corresponding to bi-Lipschitz IFS under the SOSC and also give bounds of the quantization dimension of the invariant measure supported on the graph of the fractal transformation.

	\section{preliminaries}\label{se2}
	\begin{definition}
		Let $F$ be a subset of a metric space $(Y,\rho)$. The Hausdorff dimension of $F$ is defined as follows
		$$ \dim_H{F}=\inf\{\beta>0: \text{for every}~\epsilon>0,~\text{there is a countable cover}~~ \{V_i\}~\text{of}~F~\text{ with}\sum |V_i|^\beta<\epsilon \},$$
		where $|V_i|$ denotes the diameter of $V_i.$  
		
	\end{definition}
	\begin{definition}
		The box dimension of a non-empty bounded subset $F$ of a metric space $(Y,\rho)$ is defined as
		$$\dim_{B}F=\lim_{\delta \to 0}\frac{\log{N_{\delta}(F)}}{-\log\delta},$$
		where $N_\delta(F)$ is the minimum number of sets of diameter $\delta>0$ that can cover $F,$ provided the limit exists. 
		If this limit does not exist, then limsup and liminf are known as the upper and the lower box dimension and are denoted by $\overline{\dim}_B(F)$ and $\underline{\dim}_B(F)$,  respectively.
	\end{definition}

	Let $(Y, \rho)$ be a complete metric space, and we denote the family of all nonempty compact subsets of $Y$ by $H(Y)$ . For any $A,B\in H(Y)$, we define the Hausdorff metric by 
	$$h(A,B) = \inf\{\delta>0 : A\subset {B}_\delta~~\text{and}~~ B \subset {A}_\delta \} ,$$
	where ${A}_\delta$ and ${B}_\delta$ denote the $\delta $-neighbourhoods of sets $A$ and $B$, respectively. It is well-known that $(H(Y),h)$ is a complete metric space. \par
	A map $\theta: (Y,\rho) \to (Y,\rho) $  is called a contraction if there exists a constant  $c<1$ such that 
	$$\rho(\theta(a),\theta(b)) \le c~~\rho(a,b),~~\forall~~~ a , b \in Y.$$
	\begin{definition}
		The system $\mathcal{I}=\big\{(Y,\rho); \theta_1,\theta_2,\dots,\theta_N \big\}$ is called an iterated function system (IFS), if each $\theta_i$ is a contraction self-map on $Y$ for $i\in \{1,2,\dots,N\}$.
	\end{definition}
	\begin{definition}
		The system $\{(Y,\rho); \theta_1,\theta_2,\dots,\theta_N; p_1,p_2,\cdots,p_N\}$ is called a weighted iterated function system (WIFS) if $\{(Y,\rho):\theta_1,\theta_2,\dots,\theta_N\}$ is an IFS with probability vector $(p_1,p_2,\cdots,p_N)$.
	\end{definition}
	\begin{remark}
		$(p_1,p_2,\cdots,p_N)$ is a probability vector if and only if $\sum_{i=1}^{N}p_i=1$ and $p_i>0$ for all $i\in \{1,2,\cdots,N\}$.
	\end{remark}
	\textbf{Note-} Let $\mathcal{I}=\big\{(Y,\rho); \theta_1,\theta_2,\dots,\theta_N \big\}$ be an IFS. We define the Hutchinson mapping $S$ from $H(Y)$ into $H(Y)$ given by
	$$ S(A) = \cup_{i=1}^{N} \theta_i (A).$$
	The map $S$ is a contraction
	map under the Hausdorff metric $h$. If $(Y,\rho)$ is a complete metric space, then, by Banach contraction principle, there exists a unique $E\in H(Y)$ such that $$ E = \cup_{i=1}^N \theta_i (E) .$$ The set $E$ is called the attractor of the IFS.  Furthermore, if $(p_1,p_2, \dots,p_N)$ is a probability vector corresponding to the IFS $\mathcal{I}$, then there exists a unique Borel probability measure $\mu$ supported on the attractor $E$ such that 
	\begin{equation*}
		\begin{aligned}
			\mu &= \sum_{i=1}^{N} p_i \mu \circ \theta_{i}^{-1}.
		\end{aligned}
	\end{equation*}
	We call $\mu$ the invariant measure corresponding to the WIFS $\{(Y,\rho); \theta_1,\theta_2,\dots,\theta_N; p_1,p_2,\cdots,p_N\}$. We refer the reader to see \cite{MF2,Fal}
	for details.
	
	
	\begin{definition}
		We say that an IFS $\mathcal{I}=\{(Y,\rho);\theta_1,\theta_2,\dots,\theta_N\}$ satisfies the open set condition (OSC) if there is a non-empty open set $O$ with $\theta_i(O) \subset O~~\forall~i\in \{1,2,\cdots,N\}$  and  $ \theta_i(O)\cap \theta_j(O)= \emptyset $ for $i\ne j$. Moreover, if $O \cap E \ne \emptyset,$  where $E$ is the attractor of the IFS $\mathcal{I}$, then we say that  $\mathcal{I}$ satisfies the strong open set condition (SOSC). If $\theta_i(E)\cap \theta_j(E)=\emptyset$ for $i\ne j$, then we say that the IFS $\mathcal{I}$ satisfies the strong separation condition (SSC).  
	\end{definition}
	
	\subsection{Code space}
	For this part, we refer the reader to \cite{Fal}.
	Let $(X, \rho)$ be a complete metric space. 
	Let  $\mathcal{F}:=\{X; f_1,f_2, \dots ,f_N\}$ be an IFS.
	Let $ \Omega$ be the set of all infinite sequences $\Omega=\{\{\sigma_k\}_{k=1}^{\infty}; \sigma_k\in \{1,2,\dots ,N\} \}.$  We write $ \sigma= \sigma_1\sigma_2\sigma_3\dots  \in \Omega$ to denote a typical element of $\Omega,$ and we write $\sigma_k$ to denote the $k$th element of $\sigma \in \Omega.$ Then $(\Omega, d_{\Omega})$ is a compact metric space, where the metric $d_{\Omega}$ is defined by $d_{\Omega}(\sigma, \omega)= 0$ when $\sigma= \omega$ and $d_{\Omega}(\sigma, \omega)= 2^{-k}$ when $k$ is the least index for which $\sigma_k \ne \omega_k.$ We call $\Omega$ the code space associated with the IFS $\mathcal{F}.$
	\par
	
	Let $\sigma \in \Omega $ and $x \in X.$ Then, using the contractivity of $\mathcal{F},$ it is not difficult to prove that $$ \phi_{\mathcal{F}} (\sigma) := \lim_{k \rightarrow \infty} f_{\sigma_1} \circ f_{\sigma_2} \circ \dots \circ f_{\sigma_k}(x)$$
	exists, is independent of $x,$ and depends continuously on $\sigma.$ 
	Let $A_{\mathcal{F}}=\{\phi_{\mathcal{F}}(\sigma): \sigma \in \Omega \}.$ Then, it is easy to show that $A_{\mathcal{F}} \subset X$ is the attractor of $\mathcal{F}.$ The continuous function $$ \phi_{\mathcal{F}}: \Omega \rightarrow A_{\mathcal{F}}$$ is called the address function of $\mathcal{F}.$ We call $ \phi_{\mathcal{F}}^{-1}(\{x\})=\{\sigma \in \Omega:\phi_{\mathcal{F}}(\sigma)=x\} $ the set of addresses of the point $x \in A_{\mathcal{F}}.$ \\
	We order the elements of $\Omega$ according to $$\sigma \prec \omega~ \text{if and only if}~ \sigma_k < \omega_k,$$ where $k$ is the least index for which $\sigma_k \ne \omega_k .$ We observe that all elements of $\Omega $ are less than or equal to $ \overline{N}= NNN\dots $ and greater than or equal to $ \overline{1}= 111\dots .$ Note that $\phi_{\mathcal{F}}^{-1} (\{x\})$ contains a unique largest element.
	Let $\mathcal{F}$ be an IFS with attractor $A_{\mathcal{F}}$ and address function $\phi_{\mathcal{F}}: \Omega \rightarrow A_{\mathcal{F}}.$ Let $$ \tau_{\mathcal{F}}(x)=\max\{\sigma \in \Omega : \phi_{\mathcal{F}}(\sigma)=x\}$$
	for all $x \in A_{\mathcal{F}}.$ Then $$ \Omega_{\mathcal{F}}:=\{ \tau_{\mathcal{F}}(x): x \in A_{\mathcal{F}}\} $$ is called the tops code space and $$ \tau_{\mathcal{F}}: A_{\mathcal{F}} \rightarrow \Omega_{\mathcal{F}}$$ is called the tops function corresponding to the IFS $\mathcal{F}.$ It can be seen that the tops function $ \tau_{\mathcal{F}}: A_{\mathcal{F}} \rightarrow \Omega_{\mathcal{F}}$ is one-one and onto.
	\par
	The address structure of the IFS $\mathcal{F}$ is defined to be the set of sets 
	$$\mathcal{C}_\mathcal{F}=\{ \phi_{\mathcal{F}}^{-1}(x)\cap \overline{ \Omega}_{\mathcal{F}} : x\in A_\mathcal{F}  \}.$$
	Let $\mathcal{C}_\mathcal{G}$ be the address structure of another IFS $\mathcal{G}=\{Y; g_1,g_2,\cdots,g_N\}.$  We write $\mathcal{C}_\mathcal{F}\prec \mathcal{C}_\mathcal{G}$, if for each $P\in \mathcal{C}_\mathcal{F}$, there is a $Q\in \mathcal{C}_\mathcal{G}$  such that $P\subset Q$.
	\begin{definition}
		Let $\mathcal{F}=\{X; f_1,f_2,\cdots ,f_N\}$ and $\mathcal{G}=\{Y; g_1,g_2,\cdots,g_N\}$ be two IFSs. Suppose $A_\mathcal{F}$  and $A_\mathcal{G}$ are the attractors of $\mathcal{F}$  and $\mathcal{G},$ respectively. The associated fractal transformation $T_{\mathcal{FG}}: A_\mathcal{F}\to A_\mathcal{G}$ is defined by  $$T_{\mathcal{FG}}= \phi_{\mathcal{G}}\circ \tau_{\mathcal{F}},$$
		where $\phi_{\mathcal{G}}$ is the code map corresponding to $\mathcal{G}$ and $\tau_{\mathcal{F}}$ is the tops function corresponding to  $\mathcal{F}$. 
	\end{definition}
	\begin{remark}\cite[Theorem 1]{MF7}
		Let $\mathcal{F}=\{X; f_1,f_2,\cdots ,f_N\}$ and $\mathcal{G}=\{Y; g_1,g_2,\cdots,g_N\}$ be two IFS such that $\mathcal{C}_\mathcal{F}\prec \mathcal{C}_\mathcal{G}$. Then, the fractal transformation map $T_{\mathcal{FG}}$ is continuous.
	\end{remark}

	\par
	Let $(X,\rho)$ be a complete metric space. Given a Borel probability measure $\mu $ on $X$, a number $r \in (0, +\infty)$ and $ n \in \mathbb{N}$, the $n$th quantization error of order $r$ for $\mu $ is defined by $$V_{n,r}(\mu):=\inf \Big\{\int \rho(x, A)^r d\mu(x): A \subset X, \, \text{Card}(A) \le n\Big\},$$
	where $\rho(x, A)$ represents the distance of the point $x$ from the set $A$. Let $e_{n,r}(\mu)=V_{n,r}^{\frac{1}{r}}(\mu)$. We define the \textit{quantization dimension} of order $r$ of $\mu $ by $$D_r=D_r(\mu):= \lim_{n \to \infty} \frac{ \log n}{- \log \big(e_{n,r}(\mu)\big)},$$
	if the limit exists.
	If the limit does not exist, then we define the {lower} and {upper quantization dimensions} by taking the limit inferior and the limit superior of the sequence and are denoted by $\underline{D}_r$ and $\overline{D}_r$, respectively. 
	\begin{remark}
		Let $\mu$ be a Borel probability measure on $\mathbb{R}^d$ with $ \int \|x\|^r d\mu(x)<\infty$. Then for every $n\in \mathbb{N}$ there exists a finite set $A_n\subset \mathbb{R}^d$ such that
		$$V_{n,r}(\mu)=\int \min_{a\in A_n}\|x-a\|^rd\mu(x).$$
		This $A_n$ is called an $n$-optimal set for measure $\mu$ of order $r$.
	\end{remark}
	\begin{remark}
		Let $\mu$ be a Borel probability measure on $\mathbb{R}^d$ with compact support. Then
		$$\int \|x\|^r d\mu(x)<\infty,$$ for any $r\in (0,+\infty).$
	\end{remark}
	\begin{lemma}\cite{GL1}\label{le2.9}
		Let $\mu$ be a Borel probability measure on $\mathbb{R}^d$ with $ \int \|x\|^r d\mu(x)<\infty$. Then for any $r\in (0,\infty)$
		$$V_{n,r}(\mu)\to 0,~~\text{as}~~n\to \infty.$$
	\end{lemma}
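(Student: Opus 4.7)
The plan is to upper-bound $V_{n,r}(\mu)$ by the $r$-th moment of the distance to an explicit sequence of $n$-point sets and then push the bound to zero using dominated convergence. Since $\mathbb{R}^d$ is separable, I would fix a countable dense subset $\{x_k\}_{k\in\mathbb{N}}$ of $\mathbb{R}^d$ and set $A_n:=\{x_1,\dots,x_n\}$. By the defining infimum,
\[
V_{n,r}(\mu)\;\le\;\int \rho(x,A_n)^r\,d\mu(x),
\]
so it suffices to prove that the right-hand side tends to $0$ as $n\to\infty$.

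Pointwise, $\rho(x,A_n)$ is a monotone decreasing sequence in $n$, and by density of $\{x_k\}$ it converges to $0$ for every $x\in\mathbb{R}^d$; hence $\rho(x,A_n)^r\to 0$ pointwise. To interchange limit and integral, I would dominate by the $n=1$ term: $\rho(x,A_n)^r\le\rho(x,x_1)^r\le(\|x\|+\|x_1\|)^r\le C_r(\|x\|^r+\|x_1\|^r)$ for some constant $C_r$ depending only on $r$ (the elementary inequality $(a+b)^r\le 2^r(a^r+b^r)$ works for all $r>0$). By the hypothesis $\int\|x\|^r\,d\mu(x)<\infty$ and the fact that $\mu$ is a probability measure, this dominating function is $\mu$-integrable.

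Dominated convergence then gives $\int\rho(x,A_n)^r\,d\mu(x)\to 0$, and combining with the upper bound above yields $V_{n,r}(\mu)\to 0$. No genuine obstacle is anticipated; the only mildly delicate step is choosing a dominating function that works uniformly for all $r\in(0,\infty)$, but the elementary $(a+b)^r$ estimate handles both $r\ge 1$ and $r<1$ in a single stroke.
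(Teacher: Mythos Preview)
Your argument is correct: choosing finite truncations of a countable dense set, observing that $\rho(x,A_n)^r\downarrow 0$ pointwise, and dominating by $C_r(\|x\|^r+\|x_1\|^r)$ via the elementary inequality $(a+b)^r\le 2^r(a^r+b^r)$ gives the result by dominated convergence. Note that the paper does not supply its own proof of this lemma---it is simply quoted from \cite{GL1}---so there is nothing to compare against; your proof is a clean, self-contained justification and matches the standard argument found in Graf--Luschgy.
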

	\begin{definition}
		Let $A$ be a subset of $\mathbb{R}^d.$ The Voronoi region of $a\in A$ is defined by
		$$W(a|A)=\{x\in \mathbb{R}^d:~~\|x-a\|=\min\limits_{b\in A}\|x-b\|\}$$
		and the set $\{W(a|A);~~a\in A\}$ is called the Voronoi diagram of $A$. 
	\end{definition}
	\begin{lemma}\cite{GL1}\label{le2.11}
		Let $\mu$ be a Borel probability measure on $\mathbb{R}^d$ with compact support $A_{*}$ and $r\in (0,\infty)$. Let $A_n$ be an $n$-optimal set for measure $\mu$ of order $r$. Define
		$$\|A_n\|_{\infty}=\max\limits_{a\in A_n}~~\max\limits_{x\in W(a|A_n)\cap A_{*}}\|x-a\|.$$ 
		Then $$\bigg(\frac{\|A_n\|_{\infty}}{2}\bigg)^r\min\limits_{x\in A_{*}}\mu\bigg(B\bigg(x,\frac{\|A_n\|_{\infty}}{2}\bigg)\bigg)\leq V_{n,r}(\mu).$$
	\end{lemma}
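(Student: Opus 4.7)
The plan is to exploit the definition of $\|A_n\|_\infty$ to locate a specific point in the support $A_*$ that is, together with its nearest point in $A_n$, as far apart as possible, and then use the triangle inequality to produce a ball around this point on which the pointwise distance to $A_n$ is uniformly at least $\|A_n\|_\infty/2$. Integrating this pointwise bound against $\mu$ yields the claimed lower bound on $V_{n,r}(\mu)$.

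First I would extract the witnesses for the outer maxima. Since $A_*$ is compact and $A_n$ is finite, the Voronoi region $W(a|A_n)\cap A_*$ is a finite union of compact sets, so there exist $a^{*}\in A_n$ and $x^{*}\in W(a^{*}|A_n)\cap A_*$ with $\|x^{*}-a^{*}\|=\|A_n\|_\infty$. Note that $x^{*}\in W(a^{*}|A_n)$ means $\|x^{*}-a^{*}\|\le \|x^{*}-b\|$ for every $b\in A_n$, so in particular $\|x^{*}-b\|\ge \|A_n\|_\infty$ for all $b\in A_n$.

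Next, for any $y\in B(x^{*},\|A_n\|_\infty/2)$ and any $b\in A_n$, the triangle inequality gives
\[
\|y-b\|\ge \|x^{*}-b\|-\|y-x^{*}\|\ge \|A_n\|_\infty-\tfrac{\|A_n\|_\infty}{2}=\tfrac{\|A_n\|_\infty}{2}.
\]
Taking the minimum over $b\in A_n$ and raising to the $r$-th power yields $\min_{a\in A_n}\|y-a\|^{r}\ge (\|A_n\|_\infty/2)^{r}$ for every $y$ in this ball.

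Finally, I would integrate this inequality restricted to the ball:
\[
V_{n,r}(\mu)=\int \min_{a\in A_n}\|x-a\|^{r}\,d\mu(x)\ge \int_{B(x^{*},\|A_n\|_\infty/2)}\Big(\tfrac{\|A_n\|_\infty}{2}\Big)^{r}d\mu=\Big(\tfrac{\|A_n\|_\infty}{2}\Big)^{r}\mu\!\Big(B\big(x^{*},\tfrac{\|A_n\|_\infty}{2}\big)\Big),
\]
and then replace the measure of this particular ball by $\min_{x\in A_*}\mu(B(x,\|A_n\|_\infty/2))$ to obtain the desired inequality. There is no real obstacle here; the only subtle point is ensuring that both maxima in the definition of $\|A_n\|_\infty$ are attained, which follows from compactness of $A_*$ and finiteness of $A_n$.
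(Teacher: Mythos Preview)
Your argument is correct and is essentially the standard proof of this inequality (as given in Graf--Luschgy \cite{GL1}): locate a witness pair $(a^*,x^*)$ realizing $\|A_n\|_\infty$, use the triangle inequality to bound the distance to $A_n$ from below by $\|A_n\|_\infty/2$ on the ball $B(x^*,\|A_n\|_\infty/2)$, and integrate. The paper itself does not supply a proof of this lemma; it is simply quoted from \cite{GL1}, so there is nothing further to compare.
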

	\begin{proposition}\cite{GL1}\label{prop2.12}
		\begin{enumerate}
			\item If $0\leq t_1<\overline{D}_r<t_2,$ then
			$$\limsup_{n\to \infty}n\cdot e_{n,r}^{t_1}=\infty~~~\text{and}~~~\lim_{n\to \infty}n\cdot e_{n,r}^{t_2}=0.$$
			\item If $0\leq t_1<\underline{D}_r<t_2,$ then
			$$\liminf_{n\to \infty}n\cdot e_{n,r}^{t_2}=0~~~\text{and}~~~\lim_{n\to \infty}n\cdot e_{n,r}^{t_1}=\infty.$$
		\end{enumerate}
		
	\end{proposition}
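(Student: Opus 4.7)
The plan is to reduce both claims to the single algebraic identity
\[
\log\bigl(n \cdot e_{n,r}^{t}\bigr) \;=\; \bigl(-\log e_{n,r}\bigr)\left(\frac{\log n}{-\log e_{n,r}} - t\right),
\]
valid whenever $e_{n,r} < 1$. By Lemma~\ref{le2.9} we have $V_{n,r}(\mu) \to 0$, hence $e_{n,r}(\mu) \to 0$, and in particular $-\log e_{n,r} \to +\infty$, so for all sufficiently large $n$ the factor $-\log e_{n,r}$ is positive and unbounded. Consequently the asymptotic behaviour of $n \cdot e_{n,r}^{t}$ is controlled entirely by the sign of, and the separation from $0$ of, the parenthetical quantity $\frac{\log n}{-\log e_{n,r}} - t$.

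For part (1), assume $\overline{D}_r < t_2$ and pick an intermediate value $t'$ with $\overline{D}_r < t' < t_2$. By the definition of $\overline{D}_r$ as a $\limsup$, there exists $N$ such that $\frac{\log n}{-\log e_{n,r}} < t'$ for every $n \ge N$. Then the parenthetical factor is at most $t' - t_2 < 0$, so the right-hand side of the identity tends to $-\infty$; exponentiating gives $\lim_{n\to\infty} n \cdot e_{n,r}^{t_2} = 0$. For the $t_1$ claim, assume $t_1 < \overline{D}_r$ and choose $t''$ with $t_1 < t'' < \overline{D}_r$. By the definition of $\limsup$, there is a subsequence $n_k \to \infty$ along which $\frac{\log n_k}{-\log e_{n_k,r}} > t''$, so along this subsequence the parenthetical factor exceeds $t'' - t_1 > 0$ and the identity forces $n_k \cdot e_{n_k,r}^{t_1} \to \infty$, establishing $\limsup_{n\to\infty} n \cdot e_{n,r}^{t_1} = \infty$.

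Part (2) follows by the symmetric argument, with the roles of ``eventually'' and ``infinitely often'' exchanged: from $\underline{D}_r < t_2$ one extracts a subsequence on which $\log(n_k \cdot e_{n_k,r}^{t_2}) \to -\infty$, yielding $\liminf_{n\to\infty} n \cdot e_{n,r}^{t_2} = 0$; from $t_1 < \underline{D}_r$ one obtains that eventually $\log(n \cdot e_{n,r}^{t_1}) \to +\infty$, yielding $\lim_{n\to\infty} n \cdot e_{n,r}^{t_1} = \infty$.

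There is no serious obstacle in this proof; its content is essentially definitional bookkeeping with logarithms, powered by the single nontrivial input $e_{n,r} \to 0$ from Lemma~\ref{le2.9}. The only point that deserves a little care is the choice of a \emph{strict} intermediate value between $t$ and the relevant dimension, instead of the dimension itself, so that the parenthetical factor $\frac{\log n}{-\log e_{n,r}} - t$ stays bounded away from $0$ with a definite sign and can therefore absorb the unbounded positive factor $-\log e_{n,r}$ in the decisive direction.
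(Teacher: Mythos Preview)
The paper does not give its own proof of this proposition; it is simply quoted from Graf--Luschgy \cite{GL1}. Your argument is correct and is the standard one: rewrite $\log(n\, e_{n,r}^{t})$ as $(-\log e_{n,r})\bigl(\tfrac{\log n}{-\log e_{n,r}} - t\bigr)$, use $e_{n,r}\to 0$ to make the first factor blow up, and use the definition of $\overline{D}_r$ or $\underline{D}_r$ to control the sign of the second factor via a strictly intermediate value. One small caveat: your appeal to Lemma~\ref{le2.9} tacitly imports the moment hypothesis $\int\|x\|^r\,d\mu<\infty$, which is not written into the statement here but is the standing assumption in \cite{GL1} (and is automatic for the compactly supported measures used throughout this paper).
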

	
	\par Let $\{(X,\rho);f_1,f_2,\cdots,f_N\}$ be an IFS with probability vector $(p_1,p_2,\cdots,p_N )$. We denote the set of all finite sequences of symbols belonging to the set $\{1,2,\cdots, N\}$ by $\{1,2,\cdots,N\}^*$ and $|\sigma|$ denotes the length of $\sigma \in \{1,2,\cdots,N\}^*.$ We denote the set of all finite sequences of length $n$ over the symbols belonging to the set  $\{1,2,\cdots, N\}$ by $\{1,2,\cdots,N\}^n$. Let $ \sigma \in  \{1,2,\cdots,N\}^*$ and $m\leq |\sigma|$, we define $\sigma|_{m}$ as follows 
	\begin{equation*}
		\sigma|_{m}=
		\begin{cases}
			\emptyset  ~~~~,\quad\quad m=0\\
			\sigma_1\sigma_2\cdots\sigma_m, m\ne0.
		\end{cases}
	\end{equation*}
	We define a natural order on $\{1,2,\cdots,N\}^*$ by 
	$$\sigma\leq \tau~~~~~\text{iff}~~~~~|\sigma|\leq |\tau|, \tau_{|\sigma|}=\sigma,$$ where $\sigma,\tau\in \{1,2,\cdots,N\}^*$. We write $p_{\sigma} = p_{\sigma_1}\cdot p_{\sigma_2}\cdots p_{\sigma_m}$ and
	$p_{\sigma^-} = p_{\sigma_1}\cdot p_{\sigma_2}\cdots p_{\sigma_{m-1}}$ for $\sigma\in \{1,2,\cdots,N\}^*$, $|\sigma|=m$.  Let $\sigma,\tau\in \{1,2,\cdots,N\}^*$. We say that $\sigma$ and $\tau$ are incomparable if neither $\sigma\leq \tau$ nor $\tau\leq \sigma.$ 
	\par
	A finite set $\Gamma\subset\{1,2,\cdots,N\}^*$  is called a finite antichain if and only if any two elements of $\Gamma$ are incomparable. A finite antichain $\Gamma$ is called maximal if and only if for every finite antichain $\Gamma'\subset \{1,2,\cdots,N\}^* $ with $\Gamma\subseteq \Gamma'$, we have $\Gamma' = \Gamma.$
	\par
	\begin{lemma}\label{le3.15}\cite{GL1}\cite[Lemma 3.8]{R2}
		Let $\{\mathbb{R}^{d};f_1,f_2,\cdots,f_N; p_1,p_2,\cdots,p_N\}$ be a WIFS. Let $\mu$ be the invariant measure corresponding to this WIFS. If $\Gamma $ is a finite maximal  antichain, then 
		$$\sum_{\sigma\in \Gamma}p_\sigma=1 ~~~\text{and}~~~ \mu=\sum_{\sigma\in \Gamma }p_\sigma\mu \circ f_\sigma^{-1}.$$
	\end{lemma}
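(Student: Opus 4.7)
The plan is to reduce the statement to the ``level-$n$'' case in which every index word has the same length, and then to use the maximality of $\Gamma$ to group terms. First, I would establish by induction on $n\ge 1$ the self-similarity identity
$$\mu=\sum_{\tau\in\{1,2,\dots,N\}^n}p_\tau\,\mu\circ f_\tau^{-1},$$
which follows from the defining relation $\mu=\sum_{i=1}^N p_i\,\mu\circ f_i^{-1}$ by repeatedly substituting $\mu$ into itself, using $p_{\tau i}=p_\tau p_i$ together with $f_{\tau i}^{-1}=f_i^{-1}\circ f_\tau^{-1}$.

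Next, I would set $n:=\max\{|\sigma|:\sigma\in\Gamma\}$, which is finite because $\Gamma$ is finite, and prove the key combinatorial lemma: every $\tau\in\{1,2,\dots,N\}^n$ admits exactly one prefix in $\Gamma$. For existence, if no $\sigma\in\Gamma$ satisfied $\sigma\le\tau$, then since $|\sigma|\le n=|\tau|$ also rules out $\tau\le\sigma$ whenever $\sigma\neq\tau$, the word $\tau$ would be incomparable with every element of $\Gamma$, so $\Gamma\cup\{\tau\}$ would be a strictly larger antichain, contradicting maximality. Uniqueness holds because any two prefixes of a common word $\tau$ are themselves comparable, which is forbidden inside the antichain $\Gamma$.

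With this combinatorial fact in hand, I would rewrite the level-$n$ identity by grouping each $\tau$ according to its unique prefix $\sigma\in\Gamma$:
$$\mu=\sum_{\sigma\in\Gamma}\,\sum_{\substack{\tau\in\{1,2,\dots,N\}^n\\ \sigma\le\tau}}p_\tau\,\mu\circ f_\tau^{-1}.$$
For a fixed $\sigma\in\Gamma$ with $|\sigma|=k$, the inner sum ranges bijectively over $\tau=\sigma\omega$ with $\omega\in\{1,2,\dots,N\}^{n-k}$; since $p_\tau=p_\sigma p_\omega$ and $f_\tau^{-1}=f_\omega^{-1}\circ f_\sigma^{-1}$, factoring out $p_\sigma$ and applying the level-$(n-k)$ version of the self-similarity identity collapses the inner sum to exactly $p_\sigma\,\mu\circ f_\sigma^{-1}$. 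This yields the desired decomposition of $\mu$, and evaluating both sides on the whole ambient space then gives $\sum_{\sigma\in\Gamma}p_\sigma=1$, since each $\mu\circ f_\sigma^{-1}$ is a probability measure.

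The only genuinely delicate point is the combinatorial lemma about maximal antichains; the remainder is routine substitution and a double-sum rearrangement, so I do not anticipate any serious obstacle beyond careful bookkeeping of indices.
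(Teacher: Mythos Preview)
Your argument is correct and is essentially the standard proof of this fact. Note, however, that the paper does not supply its own proof of this lemma: it is stated as a quotation from \cite{GL1} and \cite[Lemma~3.8]{R2}, so there is nothing in the paper to compare your approach against beyond observing that your induction-and-regrouping proof is exactly the one found in those references.
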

	
	\begin{lemma}\label{le3.16}\cite{GL1}
		Let $\{\mathbb{R}^{d};f_1,f_2,\cdots,f_N;p_1,p_2,\cdots,p_N\}$ be a WIFS. If  $~~ 0<\epsilon\leq \min\{p_1,p_2,\\ \cdots,p_N\}$. Then
		$$\Gamma_{\epsilon}=\{\sigma\in \{1,2,\cdots,N\}^*;~~ p_{\sigma^-}\geq \epsilon >p_{\sigma}\}$$ is a finite maximal antichain.
	\end{lemma}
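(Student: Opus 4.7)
The plan is to verify the three required properties in turn: $\Gamma_\epsilon$ is finite, it is an antichain, and it is maximal. Throughout I will use the multiplicativity $p_\sigma = p_{\sigma_1}p_{\sigma_2}\cdots p_{\sigma_m}$ for $|\sigma|=m$, with the empty-product convention $p_\emptyset=1$, and I will write $p_{\max}=\max\{p_1,\ldots,p_N\}$ and $p_{\min}=\min\{p_1,\ldots,p_N\}$. The nontrivial case is $N\ge 2$, so $p_{\max}<1$; the $N=1$ case is degenerate because then $p_\sigma\equiv 1$ forces $\Gamma_\epsilon=\emptyset$, which is vacuously a maximal antichain since $\epsilon\le p_{\min}=1$ yields no $\sigma$ with $p_\sigma<\epsilon$.

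For finiteness, any $\sigma\in\Gamma_\epsilon$ satisfies $p_{\max}^{|\sigma|-1}\ge p_{\sigma^-}\ge\epsilon$, which bounds $|\sigma|$ above by $1+\log\epsilon/\log p_{\max}$. Only finitely many words in $\{1,\ldots,N\}^*$ have bounded length, so $\Gamma_\epsilon$ is finite. For the antichain property, suppose $\sigma,\tau\in\Gamma_\epsilon$ with $\sigma\le\tau$ and $\sigma\ne\tau$. Then $\sigma$ is a prefix of $\tau^-$ (possibly equal to $\tau^-$), so $p_{\tau^-}=p_\sigma\cdot\prod_{j=|\sigma|+1}^{|\tau|-1}p_{\tau_j}\le p_\sigma<\epsilon$, contradicting $p_{\tau^-}\ge\epsilon$. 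Hence no two distinct elements of $\Gamma_\epsilon$ are comparable.

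For maximality, I will show that every $\tau\in\{1,\ldots,N\}^*$ is comparable to some element of $\Gamma_\epsilon$, which prevents any proper extension as an antichain. Look at the non-increasing sequence $p_{\tau|_0}=1,p_{\tau|_1},\ldots,p_{\tau|_{|\tau|}}=p_\tau$. If $p_\tau<\epsilon$, let $k^*$ be the smallest index with $p_{\tau|_{k^*}}<\epsilon$; then $\sigma:=\tau|_{k^*}$ lies in $\Gamma_\epsilon$ and $\sigma\le\tau$. Otherwise $p_\tau\ge\epsilon$, and I extend $\tau$ by appending letters one at a time; since the product is multiplied by some factor $\le p_{\max}<1$ at each step, it eventually drops below $\epsilon$. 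The first extension $\sigma$ at which this happens satisfies $p_{\sigma^-}\ge\epsilon>p_\sigma$, so $\sigma\in\Gamma_\epsilon$ and $\tau\le\sigma$.

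None of the steps looks genuinely difficult: the bookkeeping is all driven by the monotonicity of $p_{\tau|_k}$ in $k$ and the strict bound $p_{\max}<1$. The only place where care is needed is in the maximality argument when $|\sigma|=1$, where $\sigma^-=\emptyset$ and one uses the convention $p_\emptyset=1\ge\epsilon$; and in checking that the edge $N=1$ is consistent with the claim.
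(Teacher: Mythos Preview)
Your proof is correct and follows the standard approach; the paper itself does not prove this lemma but simply cites it from Graf and Luschgy \cite{GL1}, so there is no in-paper argument to compare against. One small quibble: your handling of the degenerate case $N=1$ is off, since the empty set is not a maximal antichain (any singleton word gives a strictly larger antichain), so the lemma actually fails for $N=1$; but the paper works throughout under the standing assumption $N\ge 2$, so this edge case is irrelevant.
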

	Let $\{\mathbb{R}^d; f_1,f_2, \dots, f_N;p_1,p_2, \dots, p_N\}$ be a WIFS such that each $f_i $ is a contractive similarity transformation such that
	$$\|f_i(x_1)-f_i(x_2)\|=c_i\|x_1-x_2\|,$$ 
	where $0<c_i<1.$
	Then, there is a unique Borel probability measure $\mu $ supported on the attractor $E$ such that  $$ \mu = \sum_{i=1}^{N} p_i  \mu \circ f_i^{-1}.$$
	In this case, we call the measure $\mu $ an invariant self-similar measure. 
	Graf and Luschgy \cite{GL1,GL2} proved that the quantization dimension function $l_r$ of the invariant self-similar measure $\mu $ exists, and satisfies the following equation: $$\sum_{i=1}^N (p_i c_i^r)^{\frac{l_r}{r +l_r}}=1,$$
	provided that the given WIFS satisfies the OSC.
	\par

	\section{On the product IFS and dimension of the graph of a fractal transformation}\label{se3}

	In the following theorem, we determine the bounds of the Hausdorff dimension of graph of fractal transformation without any separation condition.
	\begin{theorem}\label{boundth}
		Let $\mathcal{F}=\{(X,\rho_1); f_1,f_2,\cdots ,f_N\}$ and $\mathcal{G}=\{(Y,\rho_2); g_1,g_2,\cdots,g_N\}$ be two IFSs. Consider the IFS $\mathcal{H}=\{X \times Y; h_1,h_2,\cdots ,h_N\}$, where $h_i(x,y)=(f_i(x),g_i(y))$. Then  
		$$\dim_HA_\mathcal{F}\leq \dim_H G(T_\mathcal{FG})\leq \dim_HA_\mathcal{H},$$ where $G(T_\mathcal{FG})$ denotes the graph of the fractal transformation $T_\mathcal{FG}$.
	\end{theorem}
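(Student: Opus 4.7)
The plan is to prove the two inequalities separately, each by a direct and essentially coordinate-based argument; no separation condition or quantitative estimate will be needed.

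For the lower bound $\dim_H A_\mathcal{F}\le \dim_H G(T_\mathcal{FG})$, I would use monotonicity of Hausdorff dimension under Lipschitz maps. Consider the projection $\pi_1\colon X\times Y\to X$ defined by $\pi_1(x,y)=x$, where $X\times Y$ carries any of the usual product metrics (e.g.\ $\rho((x,y),(x',y'))=\rho_1(x,x')+\rho_2(y,y')$). This projection is $1$-Lipschitz, and by the very definition of the graph we have $\pi_1\bigl(G(T_\mathcal{FG})\bigr)=A_\mathcal{F}$. Hence
\[
\dim_H A_\mathcal{F}=\dim_H \pi_1\bigl(G(T_\mathcal{FG})\bigr)\le \dim_H G(T_\mathcal{FG}).
\]

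For the upper bound $\dim_H G(T_\mathcal{FG})\le \dim_H A_\mathcal{H}$, the plan is to show the set-theoretic inclusion $G(T_\mathcal{FG})\subseteq A_\mathcal{H}$; monotonicity of the Hausdorff dimension then finishes the proof. The main observation is that the product maps $h_i(x,y)=(f_i(x),g_i(y))$ iterate coordinatewise, so for any $\sigma\in\Omega$ and any base point $(x_0,y_0)\in X\times Y$,
\[
h_{\sigma_1}\circ\cdots\circ h_{\sigma_k}(x_0,y_0)=\bigl(f_{\sigma_1}\circ\cdots\circ f_{\sigma_k}(x_0),\, g_{\sigma_1}\circ\cdots\circ g_{\sigma_k}(y_0)\bigr),
\]
and taking the limit as $k\to\infty$ yields $\phi_\mathcal{H}(\sigma)=(\phi_\mathcal{F}(\sigma),\phi_\mathcal{G}(\sigma))$. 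Consequently $A_\mathcal{H}=\{(\phi_\mathcal{F}(\sigma),\phi_\mathcal{G}(\sigma)):\sigma\in\Omega\}$. Now take an arbitrary point of the graph, which has the form $(x,T_\mathcal{FG}(x))$ for some $x\in A_\mathcal{F}$. Setting $\sigma=\tau_\mathcal{F}(x)\in\Omega$, the relations $\phi_\mathcal{F}(\tau_\mathcal{F}(x))=x$ and $T_\mathcal{FG}(x)=\phi_\mathcal{G}(\tau_\mathcal{F}(x))$ give
\[
(x,T_\mathcal{FG}(x))=(\phi_\mathcal{F}(\sigma),\phi_\mathcal{G}(\sigma))=\phi_\mathcal{H}(\sigma)\in A_\mathcal{H},
\]
which proves $G(T_\mathcal{FG})\subseteq A_\mathcal{H}$ and hence the claimed inequality.

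Neither step poses a real obstacle: the lower bound is a one-line application of the Lipschitz property of projection, and the upper bound reduces to the observation that the code map of the product IFS factors componentwise, matching the definition $T_\mathcal{FG}=\phi_\mathcal{G}\circ\tau_\mathcal{F}$. The only thing to be mildly careful about is the choice of product metric on $X\times Y$ so that both $\pi_1$ is $1$-Lipschitz and the $h_i$ are contractions with the expected Lipschitz constants; any equivalent product metric works, since all of them produce the same Hausdorff dimension.
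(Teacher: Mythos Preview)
Your proposal is correct and follows essentially the same route as the paper: the lower bound via the $1$-Lipschitz projection onto the first coordinate (the paper calls this map $\Psi$ and uses the max metric $\mathcal{D}$), and the upper bound via the inclusion $G(T_{\mathcal{FG}})\subseteq A_{\mathcal{H}}$ obtained from $\phi_{\mathcal{H}}(\sigma)=(\phi_{\mathcal{F}}(\sigma),\phi_{\mathcal{G}}(\sigma))$ applied at $\sigma=\tau_{\mathcal{F}}(x)$. Your explicit verification of the componentwise factorization of $\phi_{\mathcal{H}}$ is slightly more detailed than the paper's, but the argument is the same.
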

	\begin{proof}
		We define a mapping $\Psi: G(T_{\mathcal{FG}})\to A_\mathcal{F}$ by
		$$\Psi(x,T_\mathcal{FG}(x))=x.$$
		Let $(x,T_\mathcal{FG}(x)),(x',T_\mathcal{FG}(x'))\in G(T_{\mathcal{FG}}) $. Then, we have 
		\begin{align*}
			\rho_1(\Psi(x,T_\mathcal{FG}(x)),\Psi(x',T_\mathcal{FG}(x')))&=\rho_1(x,x')\\
			&\leq \max \{\rho_1(x,x'),\rho_2(T_\mathcal{FG}(x),T_\mathcal{FG}(x'))\}\\
			&= \mathcal{D}((x,T_\mathcal{FG}(x)),(x',T_\mathcal{FG}(x'))).
		\end{align*}
		Thus, $\Psi$ is a Lipschitz map. Therefore, by the Lipschitz invariance property of the Hausdorff dimension, we get
		\begin{equation}\label{e3.1}
			\dim_HA_\mathcal{F}\leq \dim_H G(T_\mathcal{FG}).
		\end{equation}
		For the other inequality, let $(x,T_\mathcal{FG}(x))\in G(T_\mathcal{FG}). $  So, there is a $\sigma\in \Omega_\mathcal{F}$  such that $\phi_\mathcal{F}(\sigma)=x.$ Now,
		$$(x,T_\mathcal{FG}(x))=(\phi_\mathcal{F}(\sigma),\phi_{\mathcal{G}}\circ \tau_{\mathcal{F}}\circ\ \phi_\mathcal{F}(\sigma))=(\phi_\mathcal{F}(\sigma),\phi_\mathcal{G}(\sigma))\in A_\mathcal{H}.$$
		Hence, $G(T_\mathcal{FG})\subset{A_\mathcal{H}}.$ Therefore, using the monotonic property of the Hausdorff dimension, we have 
		\begin{equation}\label{e3.2}
			\dim_H G(T_\mathcal{FG})\leq \dim_HA_\mathcal{H}. 
		\end{equation}
		Combining inequalities \ref{e3.1} and \ref{e3.2}, we get our required  result.    
	\end{proof}
	
	In the next lemma, we show that the product IFSs satisfies SSC, OSC and SOSC provided one of the IFSs satisfies SSC, OSC and SOSC, respectively. 
	
	The proof of the following lemma is not difficult. But, we give the complete proof for the convenience of the reader.   
	\begin{lemma}\label{Cond}
		Let $\mathcal{F}=\{X; f_1,f_2,\cdots ,f_N\}$ and $\mathcal{G}=\{Y; g_1,g_2,\cdots,g_N\}$ be two IFSs. For the IFS $\mathcal{H}:=\{X \times Y; h_1,h_2,\cdots ,h_N\}$, where $h_i(x,y)=(f_i(x),g_i(y))$, we have the following
		\begin{enumerate}
			\item If $\mathcal{F}$ satisfies SSC, then $\mathcal{H}$ also satisfies SSC.
			\item If $\mathcal{F}$ satisfies OSC, then $\mathcal{H}$ also satisfies OSC.
			\item If $\mathcal{F}$ satisfies SOSC,  then $\mathcal{H}$ also satisfies SOSC.
		\end{enumerate}
	\end{lemma}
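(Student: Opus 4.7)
The plan is to prove the three parts by using the product structure together with the two natural projections $\pi_1 \colon X \times Y \to X$ and $\pi_2 \colon X \times Y \to Y$. The key preliminary observation, which I would establish first, is that $\pi_1(A_{\mathcal{H}}) = A_{\mathcal{F}}$ and $A_{\mathcal{H}} \subseteq A_{\mathcal{F}} \times A_{\mathcal{G}}$. Both follow from the uniqueness of attractors: $\pi_1(A_{\mathcal{H}})$ is compact and satisfies $\pi_1(A_{\mathcal{H}}) = \bigcup_i f_i(\pi_1(A_{\mathcal{H}}))$ because $\pi_1 \circ h_i = f_i \circ \pi_1$, so it must equal $A_{\mathcal{F}}$ by Banach's fixed point theorem applied to the Hutchinson operator on $H(X)$. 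The same argument applied to $A_{\mathcal{F}} \times A_{\mathcal{G}}$ shows it is invariant under the Hutchinson operator of $\mathcal{H}$, hence contains $A_{\mathcal{H}}$ (in fact $A_{\mathcal{H}}$ can be strictly smaller, which is why we cannot simply reduce the whole lemma to the well-known product case).

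For part $(1)$, suppose $\mathcal{F}$ satisfies SSC and assume towards contradiction that some $(x,y) \in h_i(A_{\mathcal{H}}) \cap h_j(A_{\mathcal{H}})$ with $i \neq j$. Applying $\pi_1$ and using $\pi_1 \circ h_i = f_i \circ \pi_1$ together with $\pi_1(A_{\mathcal{H}}) = A_{\mathcal{F}}$, one obtains $x \in f_i(A_{\mathcal{F}}) \cap f_j(A_{\mathcal{F}})$, contradicting SSC for $\mathcal{F}$.

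For part $(2)$, let $O \subset X$ be the non-empty open set witnessing OSC for $\mathcal{F}$. I would set $U = O \times Y$, which is open in $X \times Y$ and non-empty. Then $h_i(U) = f_i(O) \times g_i(Y) \subseteq O \times Y = U$, and for $i \neq j$ the intersection $h_i(U) \cap h_j(U)$ is contained in $(f_i(O) \cap f_j(O)) \times Y = \emptyset$. Hence $U$ witnesses OSC for $\mathcal{H}$.

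For part $(3)$, I would use the same open set $U = O \times Y$ from part $(2)$. To verify the strong condition, take any $x \in O \cap A_{\mathcal{F}}$, which exists by SOSC for $\mathcal{F}$. Since $\pi_1(A_{\mathcal{H}}) = A_{\mathcal{F}}$, there is some $y \in Y$ with $(x,y) \in A_{\mathcal{H}}$, and plainly $(x,y) \in U$, so $U \cap A_{\mathcal{H}} \neq \emptyset$. The one step that genuinely requires care, and which I expect to be the main (though minor) obstacle, is the projection identity $\pi_1(A_{\mathcal{H}}) = A_{\mathcal{F}}$; once this is in hand, the three statements follow by short and essentially identical arguments.
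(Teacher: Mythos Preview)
Your proposal is correct and, for parts (1) and (2), essentially identical to the paper's argument (both use $A_{\mathcal{H}} \subseteq A_{\mathcal{F}} \times A_{\mathcal{G}}$ for SSC and the open set $O \times Y$ for OSC).

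The one genuine difference is in part (3). To verify $U \cap A_{\mathcal{H}} \neq \emptyset$, the paper does not use the projection identity $\pi_1(A_{\mathcal{H}}) = A_{\mathcal{F}}$; instead it invokes the inclusion $G(T_{\mathcal{FG}}) \subset A_{\mathcal{H}}$ established in the preceding theorem, picking a point of the form $(x, T_{\mathcal{FG}}(x))$ with $x \in U \cap A_{\mathcal{F}}$. Your route via $\pi_1(A_{\mathcal{H}}) = A_{\mathcal{F}}$ is cleaner and more self-contained: it does not rely on the fractal-transformation machinery at all, and the projection identity you prove (via uniqueness of the attractor) is a natural fact worth isolating. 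The paper's route, on the other hand, reuses an inclusion already on hand and so avoids introducing a new lemma. Both arguments are short and valid; yours has the advantage of making the lemma independent of the fractal-transformation context.
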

	\begin{proof}
		\begin{enumerate}
			\item Since $\mathcal{F}$ satisfies SSC, we have 
			\[
			f_i(A_\mathcal{F}) \cap f_j(A_\mathcal{F})= \emptyset  ~~ \text{for all}~ i \ne j.\]
			This further yields
			\[
			h_i(A_\mathcal{F}\times A_\mathcal{G}) \cap h_j(A_\mathcal{F}\times A_\mathcal{G})=(f_i(A_\mathcal{F})\times g_i(A_\mathcal{G})) \cap (f_j(A_\mathcal{F})\times g_j(A_\mathcal{G}))= \emptyset ,~ \text{for all}~ i \ne j.\]
			Since $A_{\mathcal{H}} \subset A_\mathcal{F} \times A_\mathcal{G},$ we obtain
			\[
			h_i(A_{\mathcal{H}}) \cap h_j(A_{\mathcal{H}})= \emptyset ~ \text{for all}~ i \ne j.\]
			Therefore, the IFS  $\mathcal{H}$ satisfies SSC.
			\item Since $\mathcal{F}$ satisfies  OSC, we have an open set $U$ such that
			\[\cup_{i=1}^Nf_i(U) \subset U~~\text{and}~~
			f_i(U) \cap f_j(U)= \emptyset ~~ \text{for all}~ i \ne j.\]
			This further yields
			\[
			h_i(U\times Y) \cap h_j(U \times Y)=(f_i(U)\times g_i(Y)) \cap (f_j(U)\times g_j(Y))= \emptyset ~ \text{for all}~ i \ne j.\]
			Now, define an open set $W = U \times Y.$ Then
			\[
			\cup_{i=1}^N h_i(W) \subset W ~~ \text{and}~~h_i(W) \cap h_j(W)= \emptyset ~ \text{for all}~ i \ne j.\]
			Therefore, the IFS  $\mathcal{H}$ satisfies OSC.
			\item Since $\mathcal{F}$  satisfy SOSC, we have an open set $U$ such that
			\[\cup_{i=1}^Nf_i(U) \subset U,~~U\cap A_\mathcal{F}\ne\emptyset,
			f_i(U) \cap f_j(U)= \emptyset ~~ \text{for all}~ i \ne j.\]
			This further yields
			$$h_i(U\times Y)= (f_i(U)\times g_i(Y))\subset{U\times Y},~~(U\times Y) \cap (G(T_{\mathcal{FG}})) \ne \emptyset,$$
			$$h_i(U\times Y)\cap h_j(U\times Y)=(f_i(U)\times g_i(Y))\cap (f_j(U)\times g_j(Y))=\emptyset~~\text{for all}~~i\ne j.$$
			Now, we define an open set $W = U \times Y.$ Since $G(T_{\mathcal{FG}}) \subset{A_\mathcal{H}} $, we have
			\[
			\cup_{i=1}^N h_i(W) \subset W,~~h_i(W) \cap h_j(W)= \emptyset ~ \text{for all}~ i \ne j~~\text{and}~W\cap (A_{\mathcal{H}}) \ne \emptyset.\]
			Therefore, the IFS  $\mathcal{H}$ satisfies SOSC.

		\end{enumerate}
		
	\end{proof}
	Next, we define a metric on the product space as follows:
	\par Let $(X,\rho_1)$ and $(Y,\rho_2)$ be two complete metric spaces.  We define a metric $\mathcal{D}$ on $X\times Y$ by
	$$\mathcal{D}((x,y),(x',y'))=\max\{\rho_1(x,x'),\rho_2(y,y')\},$$
	where $x,x'\in X$ and $y,y'\in Y.$ It is well-known that $(X\times Y,\mathcal{D})$ is a complete metric space.
	\begin{lemma}\label{sibi}
		Let $\mathcal{F}=\{(X,\rho_1); f_1,f_2,\cdots ,f_N\}$ and $\mathcal{G}=\{(Y,\rho_2); g_1,g_2,\cdots,g_N\}$ be two IFSs. For the IFS $\mathcal{H}:=\{X \times Y; h_1,h_2,\cdots ,h_N\}$, where $h_i(x,y)=(f_i(x),g_i(y))$, we have the following
		\begin{enumerate}
			\item If $f_i$'s and $g_i$'s are similarity transformations such that $\rho_1(f_i(x),f_i(x'))=c_i\rho_1(x,x') ~~\text{and}~~\\ \rho_2(g_i(y),g_i(y'))=c_i\rho_2(y,y')$ for all $x,x'\in X$ and $y,y'\in Y$, then so are $h_i$'s.
			\item If $f_i$'s and $g_i$'s are bi-Lipschitz mappings then so are $h_i$'s.
		\end{enumerate}
	\end{lemma}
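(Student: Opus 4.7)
The plan is to unravel everything from the definition of the max metric $\mathcal{D}$, since with $\mathcal{D}((x,y),(x',y'))=\max\{\rho_1(x,x'),\rho_2(y,y')\}$ the map $h_i(x,y)=(f_i(x),g_i(y))$ decouples coordinate-wise and both statements reduce to elementary inequalities about the $\max$ function.

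For part (1), I would simply compute
\[
\mathcal{D}(h_i(x,y),h_i(x',y'))=\max\{\rho_1(f_i(x),f_i(x')),\rho_2(g_i(y),g_i(y'))\}=\max\{c_i\rho_1(x,x'),c_i\rho_2(y,y')\},
\]
and then factor out $c_i$ (which is nonnegative) to obtain $c_i\,\mathcal{D}((x,y),(x',y'))$. Thus $h_i$ is a similarity on $(X\times Y,\mathcal{D})$ with ratio $c_i$, which is moreover a contraction whenever $c_i<1$.

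For part (2), let $a_i,L_i$ be lower and upper bi-Lipschitz constants of $f_i$ and $b_i,M_i$ the corresponding constants for $g_i$. The upper bound is immediate:
\[
\mathcal{D}(h_i(x,y),h_i(x',y'))\le\max\{L_i\rho_1(x,x'),M_i\rho_2(y,y')\}\le\max\{L_i,M_i\}\,\mathcal{D}((x,y),(x',y')).
\]
The only mildly tricky step, which I expect to be the main obstacle, is the lower bound, because one must check that the pointwise inequalities $\rho_1(f_i(x),f_i(x'))\ge a_i\rho_1(x,x')$ and $\rho_2(g_i(y),g_i(y'))\ge b_i\rho_2(y,y')$ combine well under the $\max$. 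I would argue as follows: set $t=\rho_1(x,x')$, $s=\rho_2(y,y')$, and assume without loss of generality $t\ge s$ so that $\mathcal{D}((x,y),(x',y'))=t$; then
\[
\mathcal{D}(h_i(x,y),h_i(x',y'))\ge\rho_1(f_i(x),f_i(x'))\ge a_i t\ge\min\{a_i,b_i\}\,\mathcal{D}((x,y),(x',y')),
\]
and the symmetric case $s\ge t$ is handled the same way using $\rho_2(g_i(y),g_i(y'))\ge b_i s$. Hence $h_i$ is bi-Lipschitz on $(X\times Y,\mathcal{D})$ with lower constant $\min\{a_i,b_i\}$ and upper constant $\max\{L_i,M_i\}$, completing the proof.
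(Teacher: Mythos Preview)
Your argument is correct. The paper itself omits the proof entirely, writing only ``Since the proof is easy, we skip it,'' so there is no approach to compare against; your computation with the max metric $\mathcal{D}$ is exactly the intended verification, and the bi-Lipschitz constants $\min\{a_i,b_i\}$ and $\max\{L_i,M_i\}$ you obtain are precisely the ones the paper later invokes in Proposition~\ref{bigr} and the final proposition on quantization dimension.
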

	\begin{proof} Since the proof is easy, we skip it.
		
	\end{proof}
	\begin{lemma}
		Let $\mathcal{F}=\{\mathbb{R}^d; f_1,f_2,\cdots ,f_N\}$ and  $\mathcal{G}=\{\mathbb{R}^d; g_1,g_2,\cdots,g_N\}$ be two IFSs. Consider the IFS $\mathcal{H}:=\{\mathbb{R}^d \times \mathbb{R}^d; h_1,h_2,\cdots ,h_N\}$, where $h_i(x,y)=(f_i(x),g_i(y))$. If $f_i$'s and $g_i$'s are affine transformations, then so are $h_i$'s.
	\end{lemma}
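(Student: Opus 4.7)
The plan is to simply unpack the definition of an affine transformation and exhibit $h_i$ in the appropriate form. Since each $f_i\colon\mathbb{R}^d\to\mathbb{R}^d$ is affine, I would first write $f_i(x)=A_ix+b_i$ for some $d\times d$ matrix $A_i$ and translation vector $b_i\in\mathbb{R}^d$, and similarly $g_i(y)=C_iy+d_i$. The goal is then to produce, for each $i$, a $2d\times 2d$ matrix $M_i$ and a vector $v_i\in\mathbb{R}^{2d}$ such that $h_i(z)=M_iz+v_i$ under the natural identification of $\mathbb{R}^d\times\mathbb{R}^d$ with $\mathbb{R}^{2d}$.

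The natural choice is to take $M_i$ to be the block-diagonal matrix with diagonal blocks $A_i$ and $C_i$, and to set $v_i$ to be the column vector obtained by stacking $b_i$ on top of $d_i$. A direct matrix multiplication would then show
\begin{equation*}
M_i\begin{pmatrix}x\\ y\end{pmatrix}+v_i=\begin{pmatrix}A_ix+b_i\\ C_iy+d_i\end{pmatrix}=(f_i(x),g_i(y))=h_i(x,y),
\end{equation*}
so that $h_i$ coincides with the affine map $z\mapsto M_iz+v_i$ on $\mathbb{R}^{2d}$.

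There is really no obstacle in this argument; the only subtlety worth mentioning is the canonical identification of $\mathbb{R}^d\times\mathbb{R}^d$ with $\mathbb{R}^{2d}$ as a vector space, under which the block-diagonal construction above is indeed affine in the standard sense. This identification is already implicit in the product IFS setup used throughout Section \ref{se3}, and in particular is compatible with the product metric $\mathcal{D}$ introduced before Lemma \ref{sibi}, so no further verification is required.
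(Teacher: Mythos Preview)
Your proposal is correct and is exactly the straightforward verification one expects; the paper itself omits the proof entirely, saying only ``Since the proof is easy, we skip it.'' Your block-diagonal construction is the natural way to fill this in, and nothing more is needed.
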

	\begin{proof}
Since the proof is easy, we skip it.
	\end{proof} 
	\begin{remark}
		We define three mappings $f_1,f_2,f_3: \mathbb{R}\to \mathbb{R}$ by
		$$f_1(x)=\frac{x}{3},~~~~f_2(x)=\frac{x}{2},~~~~ f_3(x)=\frac{x}{2}+\frac{1}{2}.$$
		Let $\mathcal{I}=\{\mathbb{R}; f_1,f_2,f_3\}$ and $\mathcal{J}=\{\mathbb{R}; f_2,f_3\}$ be two IFSs. Then $\mathcal{J}$ is a sub IFS of $\mathcal{I}.$  One can easily show that the sub IFS $\mathcal{J}$ satisfies SOSC and SOC but the IFS $\mathcal{I}$ does not.    
	\end{remark}

	In the following proposition, we give the exact value of the Hausdorff dimension of the graph of the fractal transformation under some separation condition. 
	\begin{proposition}\label{p3.9}
		Let $\mathcal{F}=\{(X,\rho_1); f_1,f_2,\cdots ,f_N\}$ and $\mathcal{G}=\{(Y,\rho_2); g_1,g_2,\cdots,g_N\}$ be two IFSs  such that
		$$\rho_1(f_i(x),f_i(x'))=c_i\rho_1(x,x'),$$
		$$\rho_2(g_i(y),g_i(y'))\leq r_i\rho_2(y,y'),$$
		where $c_i,r_i\in (0,1).$ If $r_i\leq c_i$ for all $i\in \{1,2,\cdots,N\}$, $\mathcal{F}$ satisfies SOSC, then 
		$\dim_HG(T_\mathcal{FG})=s_0$, where $s_0$ is given by $\sum_{i=1}^{N}{c_i}^{s_0}=1.$ 
	\end{proposition}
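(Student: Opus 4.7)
The plan is to sandwich $\dim_H G(T_{\mathcal{FG}})$ between two quantities that both equal $s_0$, using Theorem \ref{boundth} as the bracketing tool.

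First, I would apply Theorem \ref{boundth} to obtain
$$\dim_H A_\mathcal{F} \le \dim_H G(T_{\mathcal{FG}}) \le \dim_H A_\mathcal{H}.$$
For the lower bound, since $\mathcal{F}$ is a similarity IFS with contraction ratios $c_i$ satisfying the SOSC, the classical Moran--Hutchinson theorem gives $\dim_H A_\mathcal{F} = s_0$, where $s_0$ is the solution of $\sum_{i=1}^N c_i^{s_0} = 1$. (SOSC implies OSC, so this is immediate.) Thus $\dim_H G(T_{\mathcal{FG}}) \ge s_0$.

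For the upper bound, the key observation is that under the max metric $\mathcal{D}$ on $X \times Y$, each $h_i$ is a Lipschitz map with constant bounded by $c_i$. Indeed, using $r_i \le c_i$,
\begin{align*}
\mathcal{D}\bigl(h_i(x,y), h_i(x',y')\bigr) &= \max\bigl\{\rho_1(f_i(x),f_i(x')),\, \rho_2(g_i(y),g_i(y'))\bigr\} \\
&\le \max\{c_i \rho_1(x,x'),\, r_i \rho_2(y,y')\} \\
&\le c_i \max\{\rho_1(x,x'),\, \rho_2(y,y')\} = c_i\, \mathcal{D}((x,y),(x',y')).
\end{align*}
Hence $\mathcal{H}$ is an IFS of Lipschitz maps with ratios $c_i$. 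I would then invoke the standard upper bound for attractors of Lipschitz IFSs (a cover argument using the $n$-th level iterates $h_{\sigma_1} \circ \cdots \circ h_{\sigma_n}$, whose images have diameter at most $c_{\sigma_1} \cdots c_{\sigma_n} \cdot \operatorname{diam}(A_\mathcal{H})$), which yields $\dim_H A_\mathcal{H} \le s_0$.

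Combining the two bounds gives $\dim_H G(T_{\mathcal{FG}}) = s_0$. I do not expect any serious obstacle: the hypothesis $r_i \le c_i$ is precisely what is needed so that the contraction ratio of the product map $h_i$ (in the max metric) is controlled by the similarity ratio $c_i$ of $f_i$, making the upper and lower Moran exponents coincide. The only subtlety is ensuring one uses the same metric $\mathcal{D}$ throughout so that the Lipschitz-IFS upper bound applies cleanly to $\mathcal{H}$.
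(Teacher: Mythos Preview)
Your proposal is correct and follows essentially the same approach as the paper's own proof: both sandwich $\dim_H G(T_{\mathcal{FG}})$ via Theorem \ref{boundth}, compute that each $h_i$ is $c_i$-Lipschitz in the max metric $\mathcal{D}$ (using $r_i\le c_i$) to obtain $\dim_H A_{\mathcal{H}}\le s_0$, and invoke the Moran--Hutchinson formula for the similarity IFS $\mathcal{F}$ under SOSC to get $\dim_H A_{\mathcal{F}}=s_0$. The only cosmetic difference is that the paper cites \cite[Theorem 9.6]{Fal} and \cite[Theorem 2.6]{AS} for the two bounds rather than sketching the cover argument.
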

	\begin{proof}
		If we consider the IFS $\mathcal{H}=\{X \times Y; h_1,h_2,\cdots ,h_N\}$. Then
		\begin{align*}
			\mathcal{D}(h_i(x,y),h_i(x',y'))&=\mathcal{D}((f_i(x),g_i(y)),(f_i(x'),g_i(y')))\\
			&=\max(\rho_1(f_i(x),f_i(x')),\rho_2(g_i(y),g_i(y')))\\
			&\leq \max (c_i\rho_1(x,x'),r_i\rho_2(y,y'))\\
			&\leq c_i \mathcal{D}((x,y),(x',y')).
		\end{align*}
		Therefore  by  \cite[Theorem 9.6]{Fal}, $\dim_HA_\mathcal{H}\leq s_0$ where $s_0$ is given by $\sum\limits_{i=1}^{N}{c_i}^{s_0}=1$.
		Since $\mathcal{F}$ satisfies SOSC and each $f_i$ is a similarity transformation, by \cite[Theorem 2.6]{AS} $\dim_HA_\mathcal{F}= s_0$, where $s_0$ is uniquely determined by $\sum\limits_{i=1}^{N}{c_i}^{s_0}=1$. Theorem \ref{boundth}, yields that
		$$s_0=\dim_HA_\mathcal{F}\leq \dim_HG(T_\mathcal{FG})\leq \dim_HA_\mathcal{H}\leq s_0.$$
		Hence  $\dim_HG(T_\mathcal{FG})=s_0$. This completes the proof.  
	\end{proof}
	\begin{proposition}
		Let $\mathcal{F}=\{(X,\rho_1); f_1,f_2,\cdots ,f_N\}$ and $\mathcal{G}=\{(Y,\rho_2); g_1,g_2,\cdots,g_N\}$ be two IFSs  such that
		$$\rho_1(f_i(x),f_i(x'))=c_i\rho_1(x,x'),$$
		$$\rho_2(g_i(y),g_i(y'))\leq r_i\rho_2(y,y'),$$
		where $c_i,r_i\in (0,1).$ If $c_i\leq r_i$ for all $i\in \{1,2,\cdots,N\}$, $\mathcal{F}$ satisfies  SOSC. Then 
		$s_0 \leq \dim_HG(T_\mathcal{FG})\leq t_0$, where $t_0$ and  $s_0$ are given by  $\sum_{i=1}^{N}{r_i}^{t_0}=1$ and  $\sum_{i=1}^{N}{c_i}^{s_0}=1$ respectively. 
	\end{proposition}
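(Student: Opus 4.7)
The strategy is to bracket $\dim_H G(T_{\mathcal{FG}})$ between $\dim_H A_{\mathcal{F}}$ and $\dim_H A_{\mathcal{H}}$ via Theorem \ref{boundth}, and then compute each of these two dimensions separately. The key change from Proposition \ref{p3.9} is that now the $r_i$'s dominate the $c_i$'s, so the product maps $h_i$ are naturally contractions of ratio $r_i$ (rather than $c_i$), which forces the upper bound to be $t_0$ rather than $s_0$.

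For the lower bound, I would invoke the fact that $\mathcal{F}$ satisfies SOSC and each $f_i$ is a similarity with ratio $c_i$, so by the classical Moran/Schief-type result \cite[Theorem 2.6]{AS} we have $\dim_H A_{\mathcal{F}} = s_0$, where $s_0$ solves $\sum_{i=1}^N c_i^{s_0} = 1$. Combining with the first inequality from Theorem \ref{boundth} gives
\begin{equation*}
s_0 = \dim_H A_{\mathcal{F}} \leq \dim_H G(T_{\mathcal{FG}}).
\end{equation*}

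For the upper bound, I would estimate the contraction ratio of the product maps $h_i$ under the metric $\mathcal{D}$. A direct calculation gives
\begin{align*}
\mathcal{D}\bigl(h_i(x,y),h_i(x',y')\bigr)
&= \max\bigl(\rho_1(f_i(x),f_i(x')),\,\rho_2(g_i(y),g_i(y'))\bigr)\\
&\leq \max\bigl(c_i\,\rho_1(x,x'),\,r_i\,\rho_2(y,y')\bigr)\\
&\leq r_i\,\mathcal{D}\bigl((x,y),(x',y')\bigr),
\end{align*}
where the last inequality uses the standing hypothesis $c_i \leq r_i$. Thus each $h_i$ is a contraction of ratio at most $r_i$, and the standard upper-bound estimate for attractors of contractive IFSs \cite[Theorem 9.6]{Fal} yields $\dim_H A_{\mathcal{H}} \leq t_0$, where $t_0$ solves $\sum_{i=1}^N r_i^{t_0} = 1$. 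Applying the second inequality from Theorem \ref{boundth} gives $\dim_H G(T_{\mathcal{FG}}) \leq \dim_H A_{\mathcal{H}} \leq t_0$.

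There is no real obstacle here; the proof is essentially the mirror image of Proposition \ref{p3.9}. The only point that deserves care is the interchange of the roles of $c_i$ and $r_i$ inside the $\max$: because $c_i \leq r_i$, pulling out $r_i$ (rather than $c_i$) is the correct move, and this is precisely what prevents us from asserting equality $\dim_H G(T_{\mathcal{FG}}) = s_0$. Combining the two displayed inequalities finishes the proof.
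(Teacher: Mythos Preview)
Your argument is correct and is exactly the ``similar arguments of Proposition~\ref{p3.9}'' that the paper invokes: you sandwich $\dim_H G(T_{\mathcal{FG}})$ via Theorem~\ref{boundth}, compute $\dim_H A_{\mathcal{F}}=s_0$ from SOSC and \cite[Theorem~2.6]{AS}, and bound $\dim_H A_{\mathcal{H}}\le t_0$ by showing each $h_i$ contracts with ratio $r_i$ (using $c_i\le r_i$) and applying \cite[Theorem~9.6]{Fal}. Nothing further is needed.
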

	\begin{proof}
		Using similar arguments of Proposition \ref{p3.9}, one can easily prove this.  
	\end{proof}

	In the upcoming theorem, we obtain bounds of the Hausdorff dimension of the product IFS provided some conditions hold.
	\begin{theorem}\label{bidi}
		Let IFS  $~\mathcal{H}:=\{X \times Y; h_1,h_2,\cdots ,h_N\}$ satisfy SOSC , where $h_i(x,y)=(f_i(x),g_i(y))$, and assume that $$  c_i \mathcal{D}((x,y),(x',y') ) \le \mathcal{D}(h_i(x,y) , h_i(x',y')) \le C_i \mathcal{D}((x,y),(x',y')) ,$$ where  $(x,y),(x',y') \in X \times Y$ and $0 < c_i \le C_i < 1 ~ \forall~ i \in \{1,2,\cdots,N\} .$ Then $r \le  \dim_H(A_\mathcal{H}) \le R  ,$ where $r$ and $R$ are given by  $ \sum\limits_{i=1}^{N} c_i^{r} =1$ and $ \sum\limits_{i=1}^{N} C_i^{R} =1$, respectively.
	\end{theorem}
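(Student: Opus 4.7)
The plan is to establish the two inequalities separately. The upper bound $\dim_H(A_\mathcal{H}) \le R$ requires only the upper Lipschitz estimate and follows from a direct diameter count on iterated cylinders. The lower bound $\dim_H(A_\mathcal{H}) \ge r$ is the substantive part: I would apply the mass distribution principle to the invariant measure of the WIFS on $\mathcal{H}$ with probabilities $p_i := c_i^{r}$, using SOSC to control overlaps at each scale.

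For the upper bound, iterate the self-similarity to write $A_\mathcal{H} = \bigcup_{\sigma \in \{1,\ldots,N\}^n} h_\sigma(A_\mathcal{H})$, where $h_\sigma := h_{\sigma_1} \circ \cdots \circ h_{\sigma_n}$. The upper bi-Lipschitz bound iterates to $|h_\sigma(A_\mathcal{H})| \le C_\sigma |A_\mathcal{H}|$ with $C_\sigma := C_{\sigma_1}\cdots C_{\sigma_n}$, so
\[
\sum_{\sigma \in \{1,\ldots,N\}^n} |h_\sigma(A_\mathcal{H})|^{R} \;\le\; \Bigl(\sum_{i=1}^{N} C_i^{R}\Bigr)^n |A_\mathcal{H}|^{R} \;=\; |A_\mathcal{H}|^{R},
\]
uniformly in $n$, by the defining equation for $R$. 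Since $\max_\sigma |h_\sigma(A_\mathcal{H})| \to 0$ as $n \to \infty$, this yields a uniform bound on the $R$-dimensional Hausdorff measure of $A_\mathcal{H}$, hence $\dim_H(A_\mathcal{H}) \le R$.

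For the lower bound, note that $\sum_i p_i = \sum_i c_i^{r} = 1$, so $(p_1,\ldots,p_N)$ is a probability vector, and let $\mu$ be the invariant measure for the WIFS $\{\mathcal{H}; p_1,\ldots,p_N\}$, supported on $A_\mathcal{H}$. By SOSC, fix an open set $U$ with $\bigcup_i h_i(U) \subset U$, $h_i(U) \cap h_j(U) = \emptyset$ for $i \ne j$, and $U \cap A_\mathcal{H} \ne \emptyset$; choose $x_0 \in U \cap A_\mathcal{H}$ and $\eta > 0$ with $B(x_0,\eta) \subset U$. Setting $c_\sigma := \prod_j c_{\sigma_j}$, one has $p_\sigma = c_\sigma^{r}$, so for $0 < \delta < \min_i c_i$ Lemma \ref{le3.16} (applied with $\epsilon = \delta^{r}$) shows that
\[
\Gamma_\delta := \{\sigma : c_{\sigma^-} \ge \delta > c_\sigma\}
\]
is a finite maximal antichain, with $c_\sigma \ge c_{\min}\delta$ on $\Gamma_\delta$, where $c_{\min} := \min_i c_i$. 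Lemma \ref{le3.15} gives $\mu = \sum_{\sigma \in \Gamma_\delta} p_\sigma\,(\mu \circ h_\sigma^{-1})$, hence for every $x \in X \times Y$,
\[
\mu(B(x,\delta)) \;\le\; \sum_{\sigma \in \Gamma_\delta :\, h_\sigma(A_\mathcal{H}) \cap B(x,\delta) \ne \emptyset} p_\sigma.
\]
The lower Lipschitz bound forces each $h_\sigma(U)$ to contain a ball of radius at least $c_\sigma\eta \ge c_{\min}\eta\delta$, while SOSC makes the family $\{h_\sigma(U) : \sigma \in \Gamma_\delta\}$ pairwise disjoint. Thus, in the ambient (doubling) metric space, a standard volumetric packing estimate bounds by a constant $M$, independent of $x$ and $\delta$, the number of $\sigma \in \Gamma_\delta$ with $h_\sigma(A_\mathcal{H}) \cap B(x,\delta) \ne \emptyset$. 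Since $p_\sigma < \delta^{r}$ on $\Gamma_\delta$, one concludes $\mu(B(x,\delta)) \le M\delta^{r}$, and the mass distribution principle delivers $\dim_H(A_\mathcal{H}) \ge r$.

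The principal obstacle is precisely this bounded-overlap estimate in the lower bound: it is here that both hypotheses must cooperate, with SOSC supplying disjoint images $h_\sigma(U)$ and the lower Lipschitz constant $c_i$ guaranteeing that each such image is ``fat'' enough to contain an inscribed ball of radius comparable to $\delta$, so that a packing argument in the ambient geometry limits their multiplicity at scale $\delta$ by a universal constant.
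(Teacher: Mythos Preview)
Your upper bound is fine and is exactly what the paper does (it just cites Falconer's Proposition~9.6). The gap is in your lower bound, at the sentence ``The lower Lipschitz bound forces each $h_\sigma(U)$ to contain a ball of radius at least $c_\sigma\eta$,'' followed by the appeal to a ``standard volumetric packing estimate'' in ``the ambient (doubling) metric space.'' Neither of these is justified here. The theorem is stated for a product of arbitrary complete metric spaces, and no doubling hypothesis is assumed. More fundamentally, a lower bi-Lipschitz bound does \emph{not} guarantee that $h_\sigma(U)$ contains a ball of radius $c_\sigma\eta$ in $X\times Y$: the lower bound only gives $h_\sigma(U)\supset B(h_\sigma(x_0),c_\sigma\eta)\cap h_\sigma(X\times Y)$, and since $h_\sigma$ is a strict contraction it is not surjective, so $h_\sigma(X\times Y)$ need not contain any ambient ball around $h_\sigma(x_0)$. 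Without fat inscribed balls and without doubling, the disjoint family $\{h_\sigma(U):\sigma\in\Gamma_\delta\}$ cannot be packed against $B(x,\delta)$ to yield a uniform multiplicity bound $M$. This step is precisely why the classical Moran--Hutchinson argument you are imitating requires similarities (images of balls are balls) in $\mathbb{R}^d$ (which is doubling); for general bi-Lipschitz maps in general metric spaces the overlap count under SOSC simply does not follow.

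The paper circumvents this entirely. Using SOSC it fixes a word $\omega$ with $h_\omega(A_\mathcal{H})\subset V$ and, for each $n$, passes to the sub-IFS $\mathcal{L}_n=\{h_{i\omega}: i\in\{1,\ldots,N\}^n\}$, which satisfies the \emph{strong separation condition}. Under SSC the overlap multiplicity is trivially $1$ at small scales, so Falconer's Proposition~9.7 gives $\dim_H(A_n^*)\ge r_n$ with $\sum_{i\in\{1,\ldots,N\}^n} c_{i\omega}^{\,r_n}=1$, \emph{without} any volumetric or doubling argument. A short contradiction argument (comparing $r_n$ to $r$ and letting $n\to\infty$ to kill the fixed factor $c_\omega$) then yields $\dim_H(A_\mathcal{H})\ge r$. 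If you want to repair your proof, the cleanest fix is exactly this: trade SOSC for SSC by composing with a deep cylinder, so that the mass-distribution bound $\mu(B(x,\delta))\lesssim\delta^{r}$ holds with $M=1$ and no packing step is needed.
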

	
	\begin{proof}
		For the upper bound of $\dim_H(A_\mathcal{H})$, follow Proposition $9.6$ in \cite{Fal}. For the lower bound of $\dim_H(A_\mathcal{H})$ we proceed as follows.
		
		Since the IFS $\mathcal{H}$ satisfies SOSC, there exists an open set $V$  of $X\times Y$ such that    \[\cup_{i=1}^N h_i(V) \subset V,~~V\cap A_\mathcal{H}\ne\emptyset,
		h_i(V) \cap h_j(V)= \emptyset ~~ \forall~ i \ne j,~~1\leq i,j\leq N.\]
		Since $V\cap A_{\mathcal{H}}\ne \emptyset$, therefore there exists an index $\omega \in \{1,2,\cdots,N\}^*$ such that $h_\omega(A_\mathcal{H})\subset V.$ We denote  $ h_\omega(A_\mathcal{H})$ by $ (A_\mathcal{H})_\omega$ for any $\omega\in \{1,2,\cdots,N\}^*$. Now, by using the condition $h_i(V) \cap h_j(V)= \emptyset ~~ \forall~ i \ne j,~~1\leq i,j\leq N$ and $h_\omega(A_\mathcal{H})\subset V,$ it is clear that for each $n\in \mathbb{N}$, the sets $\{(A_\mathcal{H})_{i\omega}: i \in \{1,2,\cdots,N\}^{n} \}$ are  pairwise disjoint. We define an IFS $\mathcal{L}_n=\{h_{i\omega}: i \in \{1,2,\cdots,N\}^n\}$. Let $A_n^*$ be the attractor of IFS $\mathcal{L}_n$. By analysing the code space of IFS $\mathcal{L}_n$ and $\mathcal{H}$, we deduce that $A_n^*\subset A_{\mathcal{H}}$. This further yields that the IFS $\mathcal{L}_n=\{h_{i\omega}: i \in \{1,2,\cdots,N\}^n\}$ satisfies SSC. Thus, the IFS $\mathcal{L}_n$ fulfill all the assumptions of Proposition $9.7$ in \cite{Fal}. Hence, by Proposition $9.7$ in \cite{Fal}, we obtain that $ r_n \le \dim_H(A_n^*)$, where $r_n$ is given by  $ \sum_{ i \in \{1,2,\cdots,N\}^n} c_{i\omega}^{r_n} =1.$ Then  $  r_n \le \dim_H(A_n^*) \le \dim_H(A_\mathcal{H})$ because $A_n^*\subset A_\mathcal{H}$. Suppose that $ \dim_H(A_\mathcal{H}) < r.$ This implies that  $ r_n < r $. Let $ c_{max}=\max\{c_1, c_2, \dots,c_{N}\}.$ Then, we have
		$$
		c_{\omega}^{- r_n}  = \sum_{ i \in \{1,2,\cdots,N\}^n} c_{i}^{r_n}\ \ge \sum_{ i \in \{1,2,\cdots,N\}^n} c_{i}^{r} c_{i}^{\dim_H(A_\mathcal{H}) -r} \ge \sum_{ i \in \{1,2,\cdots,N\}^n} c_{i}^{r} c_{max}^{n(\dim_H(A_\mathcal{H}) - r)} $$
		This implies that $$c_{\omega}^{- r} \geq c_{max}^{n(\dim_H(A_\mathcal{H}) -r)}. $$ 
		We have a contradiction for large values of $n\in \mathbb{N} $. Therefore, we get $ \dim_H(A_\mathcal{H}) \ge  r,$ proving the assertion.
	\end{proof} 
\begin{remark}
  We may compare the above result with the work of Edgar and Golds \cite{Edgar}, wherein authors determined the upper and the lower  bounds of the  Hausdorff dimension of the graph directed bi-Lipschitz IFS using the Perron-Frobenius theory. We have obtained the upper and the lower bounds  of the Hausdorff dimension for the bi-Lipschitz IFS using Moran-Hutchinson technique. We note that our result can be obtained as a special case of Edgar and Golds \cite{Edgar}.	However, our technique is different and the proof is simpler. For determining the Hausdorff dimension using the Perron-Frobenius theory in more general setting,  one can see \cite{Nussbaum1}.  
\end{remark}

	In the next result, we estimate bounds of the Hausdorff dimension of the graph of the fractal transformation by using the previous theorem.
	\begin{proposition}\label{bigr}
		Let $\mathcal{F}=\{(X, \rho_1); f_1,f_2,\cdots ,f_N\}$ and $\mathcal{G}=\{(Y, \rho_2); g_1,g_2,\cdots,g_N\}$ be two IFSs such that $f_i$ and $g_i$ are bi-Lipschitz mappings as follows
		$$c_i \rho_1(x,x')\leq \rho_1(f_i(x),f_i(x'))\leq c_i' \rho_1(x,x'),$$
		$$r_i \rho_2(y,y')\leq \rho_2(g_i(y),g_i(y'))\leq r_i' \rho_2(y,y'),$$
		where $c_i,c_i',r_i,r_i'\in \mathbb{R}$, $0<c_i\leq c_i'<1$ and $0<r_i\leq r_i'<1.$ Assume that $\mathcal{F}$ satisfies SOSC and $\mathcal{C}_\mathcal{F}\prec \mathcal{C}_\mathcal{G}.$ Then
		$s_1\leq \dim_H{G(T_{\mathcal{FG}})}\leq s_2$, where $s_1$ and $s_2$ are uniquely determined by $\sum\limits_{i=1}^{N}\min\{c_i,r_i\}^{s_1}=1$  and $\sum\limits_{i=1}^{N}\max\{c_i',r_i'\}^{s_2}=1$, respectively.
	\end{proposition}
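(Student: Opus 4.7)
The plan is to deduce Proposition \ref{bigr} by combining the squeeze inequality in Theorem \ref{boundth}, which bounds $\dim_H G(T_{\mathcal{FG}})$ between $\dim_H A_{\mathcal{F}}$ and $\dim_H A_{\mathcal{H}}$, with Theorem \ref{bidi} applied in two different places.

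First I would verify that the product IFS $\mathcal{H}=\{X\times Y;h_1,\dots,h_N\}$ fits into the framework of Theorem \ref{bidi}. By Lemma \ref{sibi}(2) each $h_i$ is bi-Lipschitz with respect to the max-metric $\mathcal{D}$, and a short direct estimate (splitting into the two cases $\rho_1\ge\rho_2$ and $\rho_1<\rho_2$) shows
$$\min\{c_i,r_i\}\,\mathcal{D}\bigl((x,y),(x',y')\bigr)\le \mathcal{D}\bigl(h_i(x,y),h_i(x',y')\bigr)\le \max\{c_i',r_i'\}\,\mathcal{D}\bigl((x,y),(x',y')\bigr).$$
Since $\mathcal{F}$ satisfies SOSC, Lemma \ref{Cond}(3) transfers SOSC to $\mathcal{H}$. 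Theorem \ref{bidi} then yields $s_1\le \dim_H A_{\mathcal{H}}\le s_2$, and the upper half of Theorem \ref{boundth} gives the desired upper estimate $\dim_H G(T_{\mathcal{FG}})\le \dim_H A_{\mathcal{H}}\le s_2$.

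For the lower bound I cannot use $s_1\le \dim_H A_{\mathcal{H}}$ directly, because Theorem \ref{boundth} only guarantees $G(T_{\mathcal{FG}})\subset A_{\mathcal{H}}$, which is the wrong direction for a lower estimate. Instead I apply Theorem \ref{bidi} to $\mathcal{F}$ itself (viewed as a bi-Lipschitz IFS satisfying SOSC on $(X,\rho_1)$, trivially matching the hypotheses of that theorem with a single factor); this produces $\dim_H A_{\mathcal{F}}\ge t_1$ where $\sum_{i=1}^N c_i^{t_1}=1$. Since $\min\{c_i,r_i\}\le c_i$ and the function $s\mapsto \sum_i a_i^s$ is strictly decreasing on $(0,\infty)$ for $a_i\in(0,1)$, comparing the defining equations of $s_1$ and $t_1$ forces $s_1\le t_1$. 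Chaining with Theorem \ref{boundth} then gives $s_1\le t_1\le \dim_H A_{\mathcal{F}}\le \dim_H G(T_{\mathcal{FG}})$, completing the proof.

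The main obstacle is conceptual rather than technical: one must notice that the lower half of Theorem \ref{bidi} applied to $\mathcal{H}$ is of no use here because $G(T_{\mathcal{FG}})$ is only known to lie \emph{inside} $A_{\mathcal{H}}$, so the bound $\dim_H G(T_{\mathcal{FG}})\ge s_1$ must be routed through $\dim_H A_{\mathcal{F}}$ together with the elementary Moran-type comparison $s_1\le t_1$. The hypothesis $\mathcal{C}_{\mathcal{F}}\prec \mathcal{C}_{\mathcal{G}}$ is not invoked in the dimension estimates themselves, but ensures that $T_{\mathcal{FG}}$ is continuous so that its graph is a well-behaved geometric object.
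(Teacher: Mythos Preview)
Your argument is correct but follows a genuinely different route from the paper. The paper uses the hypothesis $\mathcal{C}_{\mathcal{F}}\prec \mathcal{C}_{\mathcal{G}}$ substantively: together with SOSC on $\mathcal{F}$, Vince's \cite[Theorem~6.8]{And} forces $G(T_{\mathcal{FG}})=A_{\mathcal{H}}$ (equality, not merely inclusion), after which a single application of Theorem~\ref{bidi} to $\mathcal{H}$, via Lemma~\ref{sibi}(2) and Lemma~\ref{Cond}(3), delivers both bounds at once. You instead rely only on the sandwich $\dim_H A_{\mathcal{F}}\le \dim_H G(T_{\mathcal{FG}})\le \dim_H A_{\mathcal{H}}$ from Theorem~\ref{boundth}, invoke Theorem~\ref{bidi} twice (for $\mathcal{H}$ and for $\mathcal{F}$), and close the lower bound with the Moran-type comparison $s_1\le t_1$. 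Your route avoids the external citation to \cite{And} and, as you note, never uses $\mathcal{C}_{\mathcal{F}}\prec \mathcal{C}_{\mathcal{G}}$, so it in fact establishes the dimension bounds under weaker hypotheses; the paper's route is shorter and additionally yields the identification $G(T_{\mathcal{FG}})=A_{\mathcal{H}}$, which your argument does not recover.
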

	\begin{proof}
		Since $\mathcal{F}$ satisfies SOSC and $\mathcal{C}_\mathcal{F}\prec \mathcal{C}_\mathcal{G}$, by \cite[Theorem 6.8]{And}, the graph of the fractal transformation $T_\mathcal{FG}$ is same as the attractor of the IFS $\mathcal{H}=\{X\times Y; h_1,h_2,\cdots,h_N\}$, where $h_i(x,y)=(f_i(x),g_i(y))$ for all $i\in \{1,2,\cdots,N\}$. In the light of  part(2) of Lemma \ref{sibi}, part(3) of Lemma \ref{Cond} and Theorem \ref{bidi}, we get our required result.
		
	\end{proof}
	\begin{corollary}
		Let $\mathcal{F}=\{(X,\rho_1); f_1,f_2,\cdots ,f_N\}$ and $\mathcal{G}=\{(Y,\rho_2); g_1,g_2,\cdots,g_N\}$ be two IFSs such that $f_i$ and $g_i$ are similarity transformation as follows $$\rho_1(f_i(x),f_i(x'))=c_i\rho_1(x,x'),~~~~\rho_2(g_i(y),g_i(y'))=c_i\rho_2(y,y'),$$ where $c_i\in (0,1).$ If $\mathcal{F}$ satisfies SOSC and $\mathcal{C}_\mathcal{F}\prec \mathcal{C}_\mathcal{G},$ then $\dim_H{G(T_\mathcal{FG})}=s_0$, where $s_0$ is uniquely determined by $\sum\limits_{i=1}^{N}{c_i}^{s_0}=1.$
	\end{corollary}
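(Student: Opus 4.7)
My plan is to deduce this corollary as an immediate specialization of Proposition \ref{bigr} just proved above, by observing that in the similarity setting the lower and upper bi-Lipschitz constants coincide and the two IFSs share the same contraction ratios, so the lower and upper bounds from Proposition \ref{bigr} collapse to the same number.

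Concretely, the first step is to recognize that the hypothesis $\rho_1(f_i(x), f_i(x')) = c_i \rho_1(x,x')$ means $f_i$ is bi-Lipschitz with $c_i = c_i'$ in the notation of Proposition \ref{bigr}, and similarly $\rho_2(g_i(y), g_i(y')) = c_i \rho_2(y,y')$ gives $r_i = r_i' = c_i$. The SOSC assumption on $\mathcal{F}$ and the address-structure condition $\mathcal{C}_\mathcal{F} \prec \mathcal{C}_\mathcal{G}$ are exactly the hypotheses of Proposition \ref{bigr}, so all of its conclusions apply.

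The second step is to compute the two bounds. With $c_i = c_i' = r_i = r_i'$, we get $\min\{c_i, r_i\} = \max\{c_i', r_i'\} = c_i$ for every $i$, so the equations defining $s_1$ and $s_2$ both become
\[
\sum_{i=1}^{N} c_i^{s} = 1,
\]
which determines a unique value $s = s_0$. Hence $s_1 = s_2 = s_0$, and Proposition \ref{bigr} squeezes
\[
s_0 \leq \dim_H G(T_{\mathcal{FG}}) \leq s_0,
\]
yielding $\dim_H G(T_{\mathcal{FG}}) = s_0$.

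There is essentially no obstacle here, since this is a clean specialization. The only point worth checking carefully is that the uniqueness of $s_0$ solving $\sum c_i^{s_0} = 1$ is guaranteed by the strict monotonicity of $s \mapsto \sum c_i^{s}$ on $(0,\infty)$ (as each $c_i \in (0,1)$), which ensures $s_1$ and $s_2$ really are the same number rather than just both satisfying the same equation in a degenerate way. With that remark, the corollary follows directly.
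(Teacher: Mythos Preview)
Your proof is correct and follows exactly the paper's approach: the paper's proof is simply the single line ``This is a direct consequence of Proposition \ref{bigr},'' and your argument spells out precisely why that specialization works.
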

	\begin{proof}
		This is a direct consequence of Proposition \ref{bigr}.
	\end{proof}
	Next, we discuss some aspects of the invariant measure corresponding to an IFS. 
	\par Consider an IFS $\mathcal{I}=\{[0,1]; f_1,f_2\}$, where $f_1$ and $f_2$ are defined as follows
	$$f_1(x)=\frac{x}{2},~~~ f_2(x)=\frac{x}{2}+\frac{1}{2}.$$
	Then, $[0,1]$ is the attractor of IFS $\mathcal{I}.$ If we take two probability vectors $p=(\frac{1}{2} ,\frac{1}{2})$ and $q=(\frac{1}{3},\frac{2}{3}),$ then the invariant measures $\mu_p$ and $\mu_q$ corresponding to IFS $\mathcal{I}$ with probability vectors $p$  and $q$, respectively, satisfy 
	$$\mu_p=\frac{1}{2}\mu_p\circ f_1^{-1}+\frac{1}{2}\mu_p\circ f_2^{-1},~~~\mu_p[0,1]=1,$$
	$$\mu_q=\frac{1}{3}\mu_q\circ f_1^{-1}+\frac{2}{3}\mu_q\circ f_2^{-1},~~~\mu_q[0,1]=1.$$ 
	Since IFS $\mathcal{I}$  satisfies the OSC and $\partial{[0,1]}=\{0,1\}$, by using \cite[Theorem 2.2]{Bandt}, we get $\mu_p(\{0\})=\mu_p(\{1\})=\mu_p(\{0,1\})=\mu_q(\{0\})=\mu_q(\{1\})=\mu_q(\{0,1\})=0.$
	Therefore, $$\mu_p[0,\frac{1}{2}]=\frac{1}{2}\mu_p\circ f_1^{-1}[0,\frac{1}{2}]+\frac{1}{2}\mu_p\circ f_2^{-1}[0,\frac{1}{2}]=\frac{1}{2}\mu_p[0,1]+\frac{1}{2}\mu_p(\{0\})=\frac{1}{2}=\mathcal{L}_{[0,1]}^{1}[0,\frac{1}{2}]. $$
	In fact, it is not difficult to show that $\mu_p=\mathcal{L}_{[0,1]}^{1},$ where $\mathcal{L}_{[0,1]}^{1}$ is the normalized Lebesgue measure on $[0,1].$ 
	And, $$\mu_q[0,\frac{1}{2}]=\frac{1}{3}\mu_q\circ f_1^{-1}[0,\frac{1}{2}]+\frac{2}{3}\mu_q\circ f_2^{-1}[0,\frac{1}{2}]=\frac{1}{3}\mu_q[0,1]+\frac{2}{3}\mu_q(\{0\})=\frac{1}{3}\ne\mathcal{L}_{[0,1]}^{1}[0,\frac{1}{2}]. $$
	So, for different probability vectors, we get different invariant Borel probability measures supported on the attractor of the IFS.  
	
	In the following theorem, we determine a relation between the invariant measures of the IFS $\mathcal{F}$, $\mathcal{G}$ and  $\mathcal{F}\times \mathcal{G}.$
	\begin{theorem}
		Let $\mathcal{F}=\{(X,\rho_1); f_1,f_2,\cdots ,f_N\}$ and  $\mathcal{G}=\{(Y,\rho_2); g_1,g_2,\cdots,g_N\}$ be two IFSs with a probability vector $(p_1,p_2,\dots,p_N)$. Define the WIFS $\mathcal{F} \times \mathcal{G}:=\{X \times Y; \Psi_{ij}; p_ip_j: 1\leq i,j\leq N\}$, where $\Psi_{ij}(x,y)=(f_i(x),g_j(y))$. We denote the invariant measures by $\mu_{\mathcal{F}},\mu_{\mathcal{G}}$ and $\mu_{\mathcal{F}\times \mathcal{G}}$ associated with IFSs $\mathcal{F},\mathcal{G}$ and $\mathcal{F} \times \mathcal{G}$, respectively. We have the following
		$$\mu_{\mathcal{F}\times \mathcal{G}}= \mu_{\mathcal{F}} \times \mu_{\mathcal{G}},$$
		where $(\mu_{\mathcal{F}} \times \mu_{\mathcal{G}})(A \times B)= \mu_{\mathcal{F}}(A) \mu_{\mathcal{G}}(B).$
	\end{theorem}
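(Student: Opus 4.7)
The plan is to invoke the uniqueness of the invariant measure of a WIFS. Since the invariant measure $\mu_{\mathcal{F}\times\mathcal{G}}$ is the unique Borel probability measure satisfying
\begin{equation*}
\nu = \sum_{i,j=1}^{N} p_i p_j\, \nu \circ \Psi_{ij}^{-1},
\end{equation*}
it suffices to show that the product measure $\mu_{\mathcal{F}} \times \mu_{\mathcal{G}}$ also satisfies this functional equation. First I would note that $\mu_{\mathcal{F}} \times \mu_{\mathcal{G}}$ is indeed a Borel probability measure on $X \times Y$ with support contained in the compact set $A_{\mathcal{F}} \times A_{\mathcal{G}}$, so it is a legitimate candidate.

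The central computation will be carried out on measurable rectangles $A \times B$. Observing that
\begin{equation*}
\Psi_{ij}^{-1}(A \times B) = f_i^{-1}(A) \times g_j^{-1}(B),
\end{equation*}
I would use the product structure together with the invariance equations for $\mu_{\mathcal{F}}$ and $\mu_{\mathcal{G}}$ separately to compute
\begin{align*}
\sum_{i,j=1}^{N} p_i p_j\, (\mu_{\mathcal{F}} \times \mu_{\mathcal{G}}) \circ \Psi_{ij}^{-1}(A \times B)
&= \sum_{i,j=1}^{N} p_i p_j\, \mu_{\mathcal{F}}(f_i^{-1}(A))\, \mu_{\mathcal{G}}(g_j^{-1}(B)) \\
&= \Bigl(\sum_{i=1}^{N} p_i\, \mu_{\mathcal{F}}(f_i^{-1}(A))\Bigr) \Bigl(\sum_{j=1}^{N} p_j\, \mu_{\mathcal{G}}(g_j^{-1}(B))\Bigr) \\
&= \mu_{\mathcal{F}}(A)\, \mu_{\mathcal{G}}(B) = (\mu_{\mathcal{F}} \times \mu_{\mathcal{G}})(A \times B),
\end{align*}
where the factorization of the double sum uses that $(p_ip_j)$ is a product probability vector.

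Since the collection of measurable rectangles is a $\pi$-system generating the Borel $\sigma$-algebra on $X \times Y$, and both measures $\mu_{\mathcal{F}} \times \mu_{\mathcal{G}}$ and $\sum_{i,j} p_i p_j (\mu_{\mathcal{F}} \times \mu_{\mathcal{G}}) \circ \Psi_{ij}^{-1}$ are finite Borel measures that agree on this $\pi$-system, the Dynkin $\pi$-$\lambda$ theorem yields equality on all Borel sets. Thus $\mu_{\mathcal{F}} \times \mu_{\mathcal{G}}$ is invariant for the WIFS $\mathcal{F} \times \mathcal{G}$, and uniqueness of the invariant measure forces $\mu_{\mathcal{F} \times \mathcal{G}} = \mu_{\mathcal{F}} \times \mu_{\mathcal{G}}$. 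I do not anticipate a real obstacle here; the only mildly delicate point is being careful to extend equality from rectangles to all Borel sets via a standard monotone-class or $\pi$-$\lambda$ argument, rather than declaring it by inspection.
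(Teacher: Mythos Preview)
Your proof is correct and follows essentially the same approach as the paper: both verify that $\mu_{\mathcal{F}}\times\mu_{\mathcal{G}}$ satisfies the self-referential equation $\nu=\sum_{i,j}p_ip_j\,\nu\circ\Psi_{ij}^{-1}$ and then invoke the uniqueness of the invariant measure for the WIFS $\mathcal{F}\times\mathcal{G}$. The only difference is cosmetic: the paper carries out the computation directly at the level of measures (distributing the product $\bigl(\sum_i p_i\mu_{\mathcal{F}}\circ f_i^{-1}\bigr)\times\bigl(\sum_j p_j\mu_{\mathcal{G}}\circ g_j^{-1}\bigr)$), whereas you check agreement on rectangles and then extend via a $\pi$--$\lambda$ argument, which is arguably a bit more careful.
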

	\begin{proof}
		With $p_{ij}=p_ip_j$, we have
		$$    \mu_{\mathcal{F}} = \sum_{i=1}^{N} p_i \mu_{\mathcal{F}} \circ f_{i}^{-1},~~\mu_{\mathcal{G}} = \sum_{i=1}^{N} p_i \mu_{\mathcal{G}} \circ g_{i}^{-1} ~\text{and}~\mu_{*} = \sum_{i=1,j=1}^{N} p_{ij} \mu_{*} \circ \Psi_{ij}^{-1}.$$
		Now, 
		\begin{equation*}
			\begin{aligned}
				\mu_{\mathcal{F}} \times \mu_{\mathcal{G}} & = \sum_{i=1}^{N} p_i \mu_{\mathcal{F}} \circ f_{i}^{-1} \times \sum_{j=1}^{N} p_j \mu_{\mathcal{G}} \circ g_{j}^{-1}\\ & =\sum_{i=1,j=1}^{N} p_{ij} \mu_{\mathcal{F}} \circ f_{i}^{-1} \times \mu_{\mathcal{G}} \circ g_{j}^{-1}\\ & =\sum_{i=1,j=1}^{N} p_{ij} (\mu_{\mathcal{F}}  \times \mu_{\mathcal{G}}) ( f_{i}^{-1}, g_{j}^{-1})\\ & =\sum_{i=1,j=1}^{N} p_{ij} (\mu_{\mathcal{F}}  \times \mu_{\mathcal{G}}) \circ \Psi_{ij}^{-1}.
			\end{aligned}
		\end{equation*}
		Since $\mu_{\mathcal{F}\times \mathcal{G}}$ is the unique measure satisfying the equation $\mu_{\mathcal{F}\times \mathcal{G}} = \sum\limits_{i=1,j=1}^{N} p_{ij} \mu_{\mathcal{F}\times \mathcal{G}} \circ \Psi_{ij}^{-1},$ the above equation yields that $\mu_{\mathcal{F}\times \mathcal{G}}= \mu_{\mathcal{F}}  \times \mu_{\mathcal{G}}.$ Thus, the proof of the theorem is complete.  
	\end{proof}
	
	In the next result, we obtain a relationship between the quantization dimension of the invariant measures of the IFSs $\mathcal{F}$, $\mathcal{G}$ and $\mathcal{F}\times \mathcal{G}.$

	\begin{theorem}
		Let $\mathcal{F}=\{\mathbb{R}^d~; f_1,f_2,\cdots ,f_N\}$ and $\mathcal{G}=\{\mathbb{R}^d~; g_1,g_2,\cdots,g_N\}$ be two IFSs such that
		$$\|f_i(x)-f_i(x')\|=c~\|x-x'\|$$ and
		$$\|g_i(y)-g_i(y')\|=c~\|y-y'\|$$ for all $i\in \{1,2,\cdots, N\}$, where $x,x'\in X$, $y,y'\in Y$ and $0<c<1$. Also, assume that  
		$(p_1,p_2,\dots,p_N)$, and $(q_1,q_2,\dots,q_N)$  are the probability vectors corresponding to IFS $\mathcal{F}$ and $\mathcal{G}$, respectively.  Define the WIFS $\mathcal{F} \times \mathcal{G}:=\{ \mathbb{R}^d\times \mathbb{R}^d ~; \Psi_{ij}; p_iq_j: 1\leq i,j\leq N \}$, where $\Psi_{ij}(x,y)=(f_i(x),g_j(y))$.  We denote the invariant measures by $\mu_{\mathcal{F}},\mu_{\mathcal{G}}$ and $\mu_{\mathcal{F}\times \mathcal{G}}$ associated with IFSs $\mathcal{F},\mathcal{G}$ and $\mathcal{F} \times \mathcal{G}$, respectively. If $\mathcal{F}$ and $\mathcal{G}$ satisfy OSC, then  $$D_r > \max \{{D_r}^*,D_r'\},$$
		where ${D_r}^*,D_r'$ and $D_r$ denotes the quantization dimension of order $0<r<\infty$ of $\mu_\mathcal{F},\mu_\mathcal{G}$ and $\mu_{\mathcal{F}\times \mathcal{G}}$, respectively.
	\end{theorem}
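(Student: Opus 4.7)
The plan is to reduce everything to the Graf--Luschgy formula for self-similar measures under OSC and then compare the three defining implicit equations.

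First, I would check that the product WIFS $\mathcal{F}\times\mathcal{G}$ is again a self-similar WIFS satisfying the OSC. In the Euclidean norm on $\mathbb{R}^{2d}$ one has
$$\|\Psi_{ij}(x,y)-\Psi_{ij}(x',y')\|^2 = c^{2}\|x-x'\|^2+c^{2}\|y-y'\|^2 = c^{2}\|(x,y)-(x',y')\|^2,$$
so every $\Psi_{ij}$ is a similarity with the common ratio $c$. If $U_\mathcal{F}$ and $U_\mathcal{G}$ are open sets witnessing the OSC for $\mathcal{F}$ and $\mathcal{G}$, then $U_\mathcal{F}\times U_\mathcal{G}$ witnesses the OSC for $\mathcal{F}\times\mathcal{G}$, since $(i,j)\neq(i',j')$ forces disjointness in at least one coordinate factor. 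Hence the Graf--Luschgy formula applies to all three measures $\mu_\mathcal{F}$, $\mu_\mathcal{G}$, $\mu_{\mathcal{F}\times\mathcal{G}}$.

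Set $t_1=\tfrac{D_r^{*}}{r+D_r^{*}}$, $t_2=\tfrac{D_r'}{r+D_r'}$, $t=\tfrac{D_r}{r+D_r}$. Applying the Graf--Luschgy relation cited at the end of Section~\ref{se2} to each WIFS and using that all contraction ratios equal $c$, I obtain
$$c^{rt_1}\sum_{i=1}^{N}p_i^{t_1}=1,\qquad c^{rt_2}\sum_{j=1}^{N}q_j^{t_2}=1,\qquad c^{rt}\Bigl(\sum_{i=1}^{N}p_i^{t}\Bigr)\Bigl(\sum_{j=1}^{N}q_j^{t}\Bigr)=1.$$
Define $F(s):=c^{rs}\bigl(\sum_i p_i^{s}\bigr)\bigl(\sum_j q_j^{s}\bigr)$. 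Because $c\in(0,1)$ and every $p_i,q_j\in(0,1)$, each of the three factors is strictly decreasing in $s$, so $F$ is strictly decreasing on $(0,\infty)$ and $F(t)=1$.

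The crux is to evaluate $F$ at $t_1$: the first equation gives $c^{rt_1}\sum_i p_i^{t_1}=1$, so $F(t_1)=\sum_{j=1}^{N}q_j^{t_1}$. Since $t_1\in(0,1)$ and $q_j\in(0,1)$, we have $q_j^{t_1}>q_j$ for every $j$, hence $\sum_j q_j^{t_1}>\sum_j q_j =1$. Therefore $F(t_1)>1=F(t)$ and the strict monotonicity of $F$ yields $t>t_1$. Since $x\mapsto \tfrac{x}{r+x}$ is strictly increasing on $(0,\infty)$, this gives $D_r>D_r^{*}$. The symmetric argument, evaluating $F$ at $t_2$ and using $\sum_i p_i^{t_2}>1$, produces $D_r>D_r'$, so $D_r>\max\{D_r^{*},D_r'\}$.

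The only subtle point, which drives the \emph{strict} inequality in the conclusion, is the need for $N\ge 2$ and $p_i,q_j\in(0,1)$; both are guaranteed by the definition of probability vector stated earlier in the paper (all weights positive, summing to one, with $N\ge 2$ implicit in having a non-trivial IFS). Verifying the OSC for the product system and unpacking the Graf--Luschgy equation into the factored form of $F$ are routine; no further estimates are required.
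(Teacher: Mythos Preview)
Your proof is correct and follows essentially the same route as the paper: verify that each $\Psi_{ij}$ is a similarity of ratio $c$, check the OSC for the product via $U_\mathcal{F}\times U_\mathcal{G}$, apply the Graf--Luschgy formula to all three systems, factor the product equation, and exploit $\sum_j q_j^{s}>1$ for $s\in(0,1)$ together with strict monotonicity to compare exponents. The only cosmetic difference is that you package the comparison via the single strictly decreasing function $F(s)=c^{rs}\bigl(\sum_i p_i^{s}\bigr)\bigl(\sum_j q_j^{s}\bigr)$ evaluated at $t_1$, whereas the paper evaluates $\sum_i(p_ic^r)^{s}$ at $s=\tfrac{D_r}{r+D_r}$ and compares with the defining equation for $D_r^{*}$; the underlying inequality and monotonicity argument are identical.
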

	\begin{proof}
		Since the IFSs $\mathcal{F}$ and $\mathcal{G}$ satisfy OSC and each $f_i$'s  and $g_i$'s are similarity  maps with similarity constant $c$, ${D_r}^*$ and $D_r'$ are uniquely determined by 
		\begin{equation}\label{e3.3}
			\sum_{i=1}^N (p_i c^r)^{\frac{{D_r}^*}{r +{D_r}^*}}=1
		\end{equation} and
		\begin{equation}
			\sum_{i=1}^N (q_i c^r)^{\frac{D_r'}{r +D_r'}}=1.
		\end{equation}
		Since each $f_i$'s  and $g_i$'s are similarity  maps with similarity constant $c$, we have 
		\begin{align*}
			\|\Psi_{ij}(x,y)-\Psi_{ij}(x',y')\|&=\|(f_i(x),g_j(y))-(f_i(x'),g_j(y'))\|\\
			&= \sqrt{\|f_i(x)-f_i(x')\|^2+\|g_i(y)-g_i(y')\|^2}\\
			&= c~ \sqrt{\|x-x'\|^2+\|y-y'\|^2}\\
			&= c\|(x,y)-(x',y')\|.
		\end{align*}
		Thus $\Psi_{ij}$ is a similarity  map with similarity constant $c$.
		Since the IFSs $\mathcal{F}$ and $\mathcal{G}$ satisfy OSC, there exist open sets $U$ and $V$ such that
		$$\cup_{i=1}^Nf_i(U) \subset U,~~
		f_i(U) \cap f_j(U)= \emptyset ~~ \text{for all}~ i \ne j, \text{and}$$
		$$\cup_{i=1}^Ng_i(V) \subset V,~~
		g_i(V) \cap g_j(V)= \emptyset ~~ \text{for all}~ i \ne j.$$
		This further yields,
		$$\Psi_{ij}(U\times V)=f_i(U)\times g_j(V)\subset U\times V,$$
		$$\Psi_{ij}(U\times V)\cap\Psi_{i'j'}(U\times V)=(f_i(U)\times g_j(V))\cap (f_{i'}(U)\times g_{j'}(V))=\emptyset~~\text{for all } {(i,j)}\ne {(i',j')}.$$
		Therefore the IFS $\mathcal{F}\times \mathcal{G}$ satisfies OSC. The probability $p_iq_j$ correspond to similarity $\Psi_{ij}$. Then the quantization dimension $D_r$ of order $0<r<\infty$ of invariant measure $\mu_{\mathcal{F}\times \mathcal{G}}$ is given by
		$$\sum_{i=1,j=1}^{N}(p_iq_jc^r)^{\frac{D_r}{r+D_r}}=1.$$
		This implies that 
		\begin{align*}
			\sum_{i=1}^{N}(p_ic^r)^{\frac{D_r}{r+D_r}}\sum_{j=1}^{N}({q_j})^{\frac{D_r}{r+D_r}}=1.
		\end{align*}
		Since $\sum_{j=1}^{N}({q_j})^{\frac{D_r}{r+D_r}}>\sum_{i=1}^{N}q_j=1,$ we get
		$$\sum_{i=1}^{N}(p_ic^r)^{\frac{D_r}{r+D_r}}<1.$$
		
		Since $t\to  \sum_{i=1}^{N}(p_ic^r)^{t}$ is a strictly decreasing continuous function, by the above and Equation \eqref{e3.3}, we get
		\begin{align*}
			\frac{D_r}{r+D_r}&> \frac{{D_r}^*}{r+{D_r}^*}\\rD_r + D_r{D_r}^*&> r{D_r}^*+ D_r{D_r}^*\\
			D_r&>{D_r}^*.
		\end{align*}
		Similarly,  we can  also prove that $D_r>D_r'$. Therefore, 
		$$D_r > \max \{{D_r}^*,D_r'\}.$$ This completes the proof.
	\end{proof}
	\begin{remark}
		Let $\mathcal{F}=\{\mathbb{R}; f_1(x)=\frac{x}{2},f_2(x)=\frac{x}{2}+\frac{1}{2}\}$ and  $\mathcal{G}=\{\mathbb{R}; g_1(x)=\frac{x}{2},g_2(x)=\frac{x}{2}+\frac{1}{2} \}$ be two IFSs with the same probability vector $(\frac{1}{2},\frac{1}{2})$. Define the IFS $\mathcal{H}:=\{\mathbb{R}^2; h_1,h_2\}$, where $h_i(x,y)=(f_i(x),g_i(y)).$ In this case, we have
		\begin{itemize}
			\item $A_{\mathcal{F}}=A_{\mathcal{G}}=[0,1]$ and $A_{\mathcal{H}}= \{(x,x):x \in [0,1]\}$.
			\item $(\mu_{\mathcal{F}} \times \mu_{\mathcal{G}})( A_{\mathcal{H}})=0.$
		\end{itemize}
	\end{remark}
	
	By the above remark, it is clear that we may not obtain  the invariant measure of sub IFS by restricting the invariant measure of the super IFS.
	
	Next, we give an example of a measure supported on a countable set and also compute its quantization dimension.
	\begin{example}
		Let $E=\bigg\{x_m: x_m=\frac{1}{2}\bigg(\frac{1}{m}+\frac{1}{m+1}\bigg)~~~\forall~~m\in \mathbb{N}\bigg \}$ and let $\mu$ be the measure supported on $E$ such that $\mu(x_m)=\frac{\gamma}{m^2}$, where $\gamma=(\sum_{m=1}^{\infty}m^{-2})^{-1}.$ We shall show that $$D_r(\mu)=\frac{1}{2+ \frac{1}{r}}.$$
		\par 
		Let $A$ be a subset of $\mathbb{R}$ with Card$(A)=n$. We define a set
		$$\Delta= \bigg\{m\in \mathbb{N}: A\cap\bigg[\frac{1}{m+1},\frac{1}{m}\bigg]=\emptyset \bigg\}.$$
		Then, for each $m\in \Delta$, we have
		\begin{align*}
			\min_{a\in A}|x_m-a|^r\geq \bigg(\frac{\frac{1}{m}-\frac{1}{m+1}}{2}\bigg)^r=\frac{1}{2^r m^r (m+1)^r}. 
		\end{align*}
		So that,
		\begin{align*}
			\int\min_{a\in A}|x-a|^r d\mu(x) &\geq \sum_{m\in \Delta}\min_{a\in A}|x_m-a|^r\frac{\gamma}{m^2}\\
			&\geq \sum_{m\in \Delta}\frac{1}{2^r m^r (m+1)^r}\frac{\gamma}{m^2}\\
			&\geq \frac{\gamma}{2^r}\sum_{m=n+1}\frac{1}{(m+1)^{2r+2}}\\
			&\geq \frac{\gamma}{2^r} \int_{n+1}^{\infty}\frac{1}{(x+1)^{2r+2}} dx\\
			&=\frac{\gamma}{2^r}\frac{(n+1)^{-2r-1}}{(2r+1)}.
		\end{align*}
		From the above, we deduce that  $V_{n,r}(\mu)\geq \gamma_1 (n+1)^{-2r-1}$, where $\gamma_1= \frac{\gamma}{2^r (2r+1)}.$   Since $e_{n,r}(\mu)=V_{n,r}^{\frac{1}{r}}{\mu}$, we get 
		$$ e_{n,r}(\mu)\geq {\gamma_1}^\frac{1}{r} (n+1)^{-2-\frac{1}{r}}.$$
		From the above inequality, we conclude that 
		$$\underline{D}_r(\mu)=\liminf_{n\to \infty}\frac{\log n}{-\log e_{n,r}(\mu)}\geq \lim_{n\to \infty}\frac{\log n}{-\frac{1}{r}\log \gamma_1+(2+\frac{1}{r})\log (n+1)}=\frac{1}{2+\frac{1}{r}}.$$
		Using the same technique of \cite[Example 3.5]{Fal}, it is not difficult to compute that $\dim_B(E)=\frac{1}{2}.$ By using Lemma 1 in \cite{Zhu}, for any real number $t>\dim_B(E)$, we can choose a set $A_n\subset \mathbb{R}$ with Card$(A_n)=n$ such that
		$$\min_{a\in A_n}|x-a|^r\leq (\gamma_2)^r n^\frac{-r}{t}~~~\forall~~~x\in E,$$
		where $\gamma_2$ is some constant. We define a set $B_{2n}=\{x_m: 1\leq m\leq n\}\cup A_n.$ Thus, by using the above inequality and the definition of $V_{2n,r}(\mu)$, we have
		\begin{align*}
			V_{2n,r}(\mu)&\leq \int \min_{b\in B_{2n}}|x-b|^r d\mu(x) \\
			&=\sum_{m=1}^{\infty}\min_{b\in B_{2n}}|x_m-b|^r\frac{\gamma}{m^2}\\
			&\leq \gamma(\gamma_2)^r n^\frac{-r}{t} \sum_{m=n+1}^{\infty}\frac{1}{m^2} \\
			&\leq \gamma(\gamma_2)^r n^\frac{-r}{t} \int_{n}^{\infty} \frac{1}{x^2} dx\\
			&=\gamma_0~ n^{\frac{-r}{t}-1},
		\end{align*}
		where $\gamma_0=\gamma(\gamma_2)^r.$ By the above inequality and the definition of $e_{2n,r}(\mu),$ we deduce that
		$$e_{2n,r}{\mu}\leq {\gamma_0}^\frac{1}{r}  n^{\frac{-1}{t}-\frac{1}{r}}.$$
		Therefore, by the above inequality, we determine that
		$$\overline{D}_r(\mu)=\limsup_{n\to \infty} \frac{\log n}{-\log e_{n,r}(\mu)}\leq \lim_{n\to \infty} \frac{\log 2n}{-\frac{1}{r}\log \gamma_0+({\frac{1}{t}+\frac{1}{r}})\log n}=\frac{1}{{\frac{1}{t}+\frac{1}{r}}}.$$
		The above inequality holds for any $t>\frac{1}{2}.$ Therefore,  $\overline{D}_r(\mu)\leq \frac{1}{2+\frac{1}{r}}$. This proves our claim. 
	\end{example}
	\section{bounds on the quantization dimension of the invariant probability measure supported on the attractor of a bi-Lipschitz WIFS}\label{Se4}
	Firstly, we give some lemmas and propositions for determining an upper bound of the quantization dimension for the invariant Borel probability measure supported on the attractor of a bi-Lipschitz WIFS.
	\begin{lemma}\label{le3.13}
		Let $\mu$ be a Borel probability measure on $\mathbb{R}^{d}$ and $f:\mathbb{R}^{d}\to \mathbb{R}^{d}$ be a bi-Lipschitz map such that 
		$s\|x-y\|\leq\|f(x)-f(y)\|\leq c\|x-y\|,$ where $x,y\in \mathbb{R}^{d}$ and $s,c\in (0,1).$ Then 
		$$V_{n,r}(\mu\circ f^{-1} )\leq c^r V_{n,r}(\mu).$$
	\end{lemma}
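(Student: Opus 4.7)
The plan is to exploit the definition of $V_{n,r}$ directly, using $f(A)$ as a competing codebook for the pushforward measure whenever $A$ is a codebook for $\mu$. The bi-Lipschitz lower bound $s\|x-y\|$ will not actually be needed for this inequality; only the upper Lipschitz constant $c$ is relevant. The lower bound is typically invoked to establish a matching reverse inequality or to control the quantization of the original measure from the pushforward.

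Concretely, I would proceed as follows. Fix any finite set $A\subset \mathbb{R}^d$ with $\mathrm{Card}(A)\leq n$ and set $B:=f(A)$. Then $\mathrm{Card}(B)\leq n$ as well, so $B$ is admissible in the infimum defining $V_{n,r}(\mu\circ f^{-1})$. By the change-of-variables formula for the pushforward measure,
\begin{equation*}
\int \rho(y,B)^r\,d(\mu\circ f^{-1})(y)=\int \rho(f(x),f(A))^r\,d\mu(x).
\end{equation*}
Next, for every $x\in\mathbb{R}^d$,
\begin{equation*}
\rho(f(x),f(A))=\inf_{a\in A}\|f(x)-f(a)\|\leq \inf_{a\in A} c\,\|x-a\|=c\,\rho(x,A),
\end{equation*}
using only the upper Lipschitz bound. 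Raising to the $r$th power, integrating against $\mu$, and combining with the previous display gives
\begin{equation*}
\int \rho(y,B)^r\,d(\mu\circ f^{-1})(y)\leq c^r\int \rho(x,A)^r\,d\mu(x).
\end{equation*}

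Finally, taking the infimum over all admissible $A$ on the right (since $f(A)$ is only one among many possible codebooks on the left, the inequality only gets stronger after passing to the infimum on the left),
\begin{equation*}
V_{n,r}(\mu\circ f^{-1})\leq \inf_{A:\,\mathrm{Card}(A)\leq n} c^r\int \rho(x,A)^r\,d\mu(x)=c^r V_{n,r}(\mu),
\end{equation*}
which is the claimed inequality. There is no real obstacle: the only point requiring a small amount of care is verifying that $f(A)$ is genuinely a valid codebook (which is immediate because cardinality does not increase under $f$) and writing the change-of-variables identity correctly for the pushforward $\mu\circ f^{-1}$.
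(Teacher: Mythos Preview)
Your proof is correct and follows essentially the same approach as the paper: both arguments use the change-of-variables identity for the pushforward and then restrict the competing codebooks to those of the form $f(A)$, applying the upper Lipschitz constant $c$ to bound $\|f(x)-f(a)\|$ by $c\|x-a\|$. Your observation that the lower bound $s$ plays no role in this particular inequality is also accurate.
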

	\begin{proof}
		Let $\nu=\mu\circ f^{-1}$. It can be easily observed that $\nu$ is a Borel probability measure on $\mathbb{R}^{d}$. We begin with the definition of $ V_{n,r}(\nu)$. We have
		\begin{align*}
			V_{n,r}(\nu)&=\inf \Big\{\int \min_{a\in A}\|x- a\|^r d\nu(x): A \subset \mathbb{R}^d, \, \text{Card}(A) \le n\Big\}\\
			&=\inf \Big\{\int  \min_{a\in A}\|x- a\|^rd(\mu\circ f^{-1})(x): A \subset \mathbb{R}^d, \, \text{Card}(A) \le n\Big\}\\
			&=\inf \Big\{\int  \min_{a\in A}\|f(y)- a\|^rd\mu(y): A \subset \mathbb{R}^d, \, \text{Card}(A) \le n\Big\}\\
			&\leq \inf \Big\{\int  \min_{f(b)\in f(A_1)}\|f(y)- f(b)\|^rd\mu(y): A_1 \subset \mathbb{R}^d, \, \text{Card}(A_1) \le n\Big\}\\
			&\leq c^r\inf \Big\{\int  \min_{b\in A_1}\|y- b\|^rd\mu(y): A_1 \subset \mathbb{R}^d, \, \text{Card}(A_1) \le n\Big\}\\
			&=c^rV_{n,r}(\mu).
		\end{align*}
		Thus, the proof of the lemma is established.
	\end{proof}
	\begin{lemma}\label{le3.14}
		Let $\mu_i$ be a Borel probability measure on $\mathbb{R}^d$  for $i=1,2,\cdots, K$. We define $\mu=\sum\limits_{i=1}^{K}s_i\mu_i$, where $s_i\geq0$ and $\sum\limits_{i=1}^{K}s_i=1.$ If $\sum\limits_{i=1}^{K}n_i\leq n$ where $n_i\in \mathbb{N}$ and $ \int \|x\|^r d\mu_i(x)<\infty$ for $i=1,2,\cdots, K$. Then
		$$V_{n,r}(\mu)\leq \sum\limits_{i=1}^{K}s_iV_{n_i,r}(\mu_i).$$
	\end{lemma}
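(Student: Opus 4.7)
The plan is to construct an explicit codebook for $\mu$ by pooling near-optimal codebooks for each $\mu_i$ and then exploit the linearity of integration with respect to the mixture. Concretely, for each $i \in \{1,2,\dots,K\}$ I would invoke the remark (following the definition of $V_{n,r}$) which guarantees that an $n_i$-optimal set $A_i \subset \mathbb{R}^d$ exists for $\mu_i$, i.e.\ a set with $\operatorname{Card}(A_i) \le n_i$ for which
\[
V_{n_i,r}(\mu_i) = \int \min_{a \in A_i} \|x-a\|^r \, d\mu_i(x).
\]
This uses the hypothesis $\int \|x\|^r\, d\mu_i(x) < \infty$.

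Next I would set $A = \bigcup_{i=1}^{K} A_i$. Then $\operatorname{Card}(A) \le \sum_{i=1}^{K} n_i \le n$, so $A$ is an admissible candidate in the infimum defining $V_{n,r}(\mu)$. Since $A_i \subset A$, the pointwise inequality $\min_{a \in A} \|x-a\|^r \le \min_{a \in A_i} \|x-a\|^r$ holds for every $x \in \mathbb{R}^d$ and every $i$.

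The rest is a one-line computation: using $\mu = \sum_{i=1}^{K} s_i \mu_i$ and interchanging the sum with the integral (permitted because all integrands are nonnegative and all $s_i \ge 0$), I would write
\[
V_{n,r}(\mu) \le \int \min_{a \in A} \|x-a\|^r \, d\mu(x) = \sum_{i=1}^{K} s_i \int \min_{a \in A} \|x-a\|^r \, d\mu_i(x) \le \sum_{i=1}^{K} s_i \int \min_{a \in A_i} \|x-a\|^r \, d\mu_i(x) = \sum_{i=1}^{K} s_i V_{n_i,r}(\mu_i),
\]
which is the desired inequality. There is no serious obstacle here; the only subtle point is to be sure an optimal $n_i$-set actually exists for each $\mu_i$, but this is exactly the content of the remark cited above and relies on the finite $r$-th moment assumption.
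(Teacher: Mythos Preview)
Your proposal is correct and follows essentially the same approach as the paper's own proof: take optimal $n_i$-sets $A_i$ for each $\mu_i$ (using the finite $r$-th moment hypothesis), form their union $A$, and use linearity of $\mu = \sum_i s_i \mu_i$ together with the pointwise inequality $\min_{a\in A}\|x-a\|^r \le \min_{a\in A_i}\|x-a\|^r$ to conclude. The argument and the chain of inequalities are identical.
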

	\begin{proof}
		Since $ \int \|x\|^r d\mu_i(x)<\infty$ for $i=1,2,\cdots, K$, we get $n_i$-optimal set $A_{n_i}\subset\mathbb{R}^d$ for measure $\mu_i$  with $\sum\limits_{i=1}^{K}n_i\leq n$. Let $A=\cup_{i=1}^{K}A_{n_i}$. Then Card$(A)\leq n$. By the definition of $V_{n,r}(\mu)$, we have 
		\begin{align*}
			V_{n,r}(\mu)&\leq \int \min_{a\in A}\|x- a\|^r d\mu(x) \\
			&= \sum_{i=1}^{K} s_i \int \min_{a\in A}\|x- a\|^r d\mu_i(x)\\
			&\leq  \sum_{i=1}^{K} s_i \int \min_{a\in A_{n_i}}\|x- a\|^r d\mu_i(x)\\
			&= \sum_{i=1}^{K} s_i V_{n_i,r}(\mu_i).
		\end{align*}
		This completes the proof.
	\end{proof}
	
	\begin{lemma} Let $\{\mathbb{R}^{d};f_1,f_2,\cdots,f_N\}$ be an IFS such that each $f_i$ satisfies
		$s_i\|x-y\|\leq\|f_i(x)-f_i(y)\|\leq c_i\|x-y\|$, where $x,y\in \mathbb{R}^d$ and $0<s_i\leq c_i<1.$ Also, assume that $(p_1,p_2,\cdots,p_N)$ be a probability vector corresponding to the IFS. 
		Let $r\in (0,\infty)$ be a fixed number. Then there exists a unique $l_r\in (0,\infty)$, satisfying 
		$$\sum_{i=1}^{N}(p_i{c_i}^r)^{\frac{l_r}{r+l_r}}=1.$$
	\end{lemma}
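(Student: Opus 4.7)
The plan is to rewrite the defining equation as a level-set problem for a single continuous real function and invoke the intermediate value theorem together with strict monotonicity. Set
\[
\phi(t) = \sum_{i=1}^{N}(p_i c_i^r)^{\frac{t}{r+t}}, \qquad t \in (0,\infty),
\]
so that producing $l_r$ amounts to producing a unique solution of $\phi(l_r) = 1$.

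First I would collect the elementary facts. Since each $c_i \in (0,1)$ and each $p_i \in (0,1]$, we have $0 < p_i c_i^r < 1$ for every $i$. The auxiliary function $u(t) := t/(r+t)$ maps $(0,\infty)$ continuously and strictly increasingly onto $(0,1)$, with $u(t) \to 0$ as $t \to 0^+$ and $u(t) \to 1$ as $t \to \infty$. Consequently, for each $i$ the map $t \mapsto (p_i c_i^r)^{u(t)}$ is continuous and \emph{strictly decreasing} on $(0,\infty)$, because an exponential $a^u$ with base $a \in (0,1)$ is strictly decreasing in $u$. Summing over $i$ preserves continuity and strict monotonicity, so $\phi$ is continuous and strictly decreasing on $(0,\infty)$.

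Next I would evaluate the endpoint limits. As $t \to 0^+$, each summand tends to $1$, giving $\phi(t) \to N$ (where the lemma is used under the standing assumption $N \geq 2$; the degenerate case $N=1$ forces $\phi \equiv c_1^{r u(t)} < 1$ and admits no positive $l_r$, but a one-map IFS is outside the intended setting). As $t \to \infty$, $u(t) \to 1$ and
\[
\phi(t) \to \sum_{i=1}^{N} p_i c_i^r < \sum_{i=1}^{N} p_i = 1,
\]
the strict inequality coming from $c_i^r < 1$ for all $i$. Hence $\phi$ decreases continuously from a value strictly larger than $1$ to a value strictly smaller than $1$.

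The assertion then follows at once: the intermediate value theorem supplies some $l_r \in (0,\infty)$ with $\phi(l_r)=1$, and strict decrease upgrades existence to uniqueness. No step is truly an obstacle here; the only point deserving care is the verification that $p_i c_i^r < 1$ (ensured by $c_i < 1$), since an equality $p_i c_i^r = 1$ in some coordinate would flatten the corresponding exponential and could spoil the strict monotonicity on which uniqueness depends. The lower bi-Lipschitz constants $s_i$ play no role in this particular lemma.
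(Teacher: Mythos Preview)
Your proof is correct and follows essentially the same route as the paper: define an appropriate function, verify continuity and strict monotonicity from the fact that each base $p_i c_i^r$ lies in $(0,1)$, check the endpoint values straddle $1$, and invoke the intermediate value theorem. The only cosmetic difference is that the paper first substitutes $t_0 = l_r/(r+l_r)$ and works with the simpler function $F(t_0)=\sum_i (p_i c_i^r)^{t_0}$ on $[0,1]$, evaluating at the endpoints $F(0)=N\ge 2$ and $F(1)<1$, whereas you compose directly and take limits; the content is identical.
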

	\begin{proof}
		We define a function $F:[0,\infty)\to \mathbb{R}$ by
		$$F(t)=\sum_{i=1}^{N}(p_i{c_i}^r)^t.$$ It can be easily observe that $F$ is a strictly decreasing continuous function as $0<p_i,c_i<1$ and $r\in(0,\infty)$. Furthermore, $F(0)=N\geq 2$ and $F(1)=\sum_{i=1}^{N}p_ic_i^{r}<\sum_{i=1}^{N}p_i=1$. Therefore, by the intermediate value theorem and using strictly decreasing property of $F$, there exists a unique number $t_0\in (0,1)$ such that $F(t_0)=\sum_{i=1}^{N}(p_i{c_i}^r)^{t_0}=1.$  Since $t_0\in (0,1)$, there is a unique $l_r\in (0,\infty )$ such that $\frac{l_r}{r+l_r}=t_0.$ Hence, there is a unique number $l_r\in (0,\infty)$ such that   $\sum_{i=1}^{N}(p_i{c_i}^r)^{\frac{l_r}{r+l_r}}=1.$ This completes the proof.
	\end{proof}
	\begin{proposition}\label{le3.18}
		Let $\mathcal{W}=\{\mathbb{R}^{d};~f_1,f_2,\cdots,f_N;~~p_1,p_2,\cdots,p_N\}$ be a weighted IFS (WIFS) such that each $f_i$ satisfies
		$s_i\|x-y\|\leq\|f_i(x)-f_i(y)\|\leq c_i\|x-y\|$, where $x,y\in \mathbb{R}^d$ and $0<s_i\leq c_i<1$. Let $\mu$ be the invariant Borel probability measure corresponding to the WIFS $\mathcal{W}$. Then for every $n\in \mathbb{N}$ and $ r\in (0,\infty)$
		$$V_{n,r}(\mu)\leq \min \bigg\{\sum_{i=1}^{N}p_i{c_i}^rV_{n_i,r}(\mu):~~n_i\in \mathbb{N}, \sum\limits_{i=1}^{N}n_i\leq n\bigg\}.$$
	\end{proposition}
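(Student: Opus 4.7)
The plan is to combine the invariance relation for $\mu$ with the two preparatory lemmas just above. Since $\mu$ is the invariant Borel probability measure of the WIFS $\mathcal{W}$, we have the decomposition
\begin{equation*}
\mu = \sum_{i=1}^{N} p_i\, \mu \circ f_i^{-1},
\end{equation*}
so $\mu$ is a convex combination of the push-forward measures $\mu_i := \mu \circ f_i^{-1}$ with weights $p_i$. This is exactly the setup of Lemma \ref{le3.14}.

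First I would fix $n\in\mathbb{N}$ and an arbitrary choice of positive integers $n_1,\dots,n_N$ satisfying $\sum_{i=1}^{N} n_i \le n$. Applying Lemma \ref{le3.14} to the decomposition above with these $n_i$, I get
\begin{equation*}
V_{n,r}(\mu) \le \sum_{i=1}^{N} p_i\, V_{n_i,r}(\mu \circ f_i^{-1}).
\end{equation*}
Next, because each $f_i$ is bi-Lipschitz with upper Lipschitz constant $c_i<1$, Lemma \ref{le3.13} applies to each summand and yields $V_{n_i,r}(\mu\circ f_i^{-1}) \le c_i^{r}\, V_{n_i,r}(\mu)$. Substituting gives
\begin{equation*}
V_{n,r}(\mu) \le \sum_{i=1}^{N} p_i\, c_i^{r}\, V_{n_i,r}(\mu).
\end{equation*}

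A minor technical point to verify before invoking Lemma \ref{le3.14} is that each $\mu_i$ has finite $r$th moment, i.e.\ $\int\|x\|^r\, d\mu_i(x)<\infty$. This holds because $\mu$ is supported on the attractor $A_{\mathcal{W}}$ of the IFS (a compact set in $\mathbb{R}^d$), so $\mu_i=\mu\circ f_i^{-1}$ is also compactly supported, and the remark preceding Lemma \ref{le2.9} guarantees finite $r$th moments for compactly supported measures.

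Finally, since the choice of $(n_1,\dots,n_N)$ with $\sum n_i \le n$ was arbitrary, I take the minimum over all such tuples and obtain
\begin{equation*}
V_{n,r}(\mu) \le \min\Bigl\{\sum_{i=1}^{N} p_i\, c_i^{r}\, V_{n_i,r}(\mu) : n_i\in\mathbb{N},\ \sum_{i=1}^{N} n_i \le n\Bigr\},
\end{equation*}
which is precisely the claimed inequality. There is no real obstacle here: the proof is a direct concatenation of Lemma \ref{le3.14} (splitting the quantization error across a convex combination) and Lemma \ref{le3.13} (pushing the error through a bi-Lipschitz map), tied together by the self-similarity of $\mu$.
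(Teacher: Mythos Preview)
Your proposal is correct and follows essentially the same approach as the paper: use the invariance relation $\mu=\sum_i p_i\,\mu\circ f_i^{-1}$, apply Lemma~\ref{le3.14} to this convex combination, and then apply Lemma~\ref{le3.13} to bound each $V_{n_i,r}(\mu\circ f_i^{-1})$ by $c_i^r V_{n_i,r}(\mu)$. Your verification of the finite $r$th moment hypothesis via compact support is also exactly the justification the paper gives.
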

	\begin{proof}
		Since $\mu$ is the invariant measure corresponding to the WIFS $\mathcal{W}$,  $\mu=\sum\limits_{i=1}^{N}p_i\mu\circ {f_i}^{-1}.$ Using Lemma \ref{le3.13}, for each $n\in\mathbb{N}$ we get 
		$$V_{n,r}(\mu\circ {f_i}^{-1} )\leq c_i^r V_{n,r}(\mu)~~~\forall~~i=1,2,\cdots,N.$$
		Since $ \int \|x\|^r d\mu(x)<\infty$,   $ \int \|x\|^r d(\mu\circ {f_i}^{-1})(x)<\infty$. Thus, in the light of Lemma \ref{le3.14} and by the above inequality, we get 
		$$V_{n,r}(\mu)\leq \min \bigg\{\sum_{i=1}^{N}p_i{c_i}^rV_{n_i,r}(\mu):~~n_i\in \mathbb{N}, \sum\limits_{i=1}^{N}n_i\leq n\bigg\}.$$
		Thus, the proof is complete.
	\end{proof}
	\begin{corollary}\label{co3.19}
		Let $\mathcal{W}=\{\mathbb{R}^{d};~f_1,f_2,\cdots,f_N;~~p_1,p_2,\cdots,p_N\}$ be a WIFS such that each $f_i$ satisfies
		$s_i\|x-y\|\leq\|f_i(x)-f_i(y)\|\leq c_i\|x-y\|$, where $x,y\in \mathbb{R}^d$ and $0<s_i\leq c_i<1$ and each $p_i>0$. Let $\mu$ be the invariant Borel probability measure corresponding to the WIFS $\mathcal{W}$.
		Let $\Gamma\subset \{1,2,\cdots,N\}^*$ be a  finite maximal antichain. Then for any $n\in \mathbb{N}$ with $n\geq |\Gamma|$, $\sigma\in \Gamma$ and $r\in(0,\infty)$ 
		$$V_{n,r}(\mu)\leq \min \bigg\{\sum_{\sigma\in \Gamma}p_\sigma c_{\sigma}^rV_{n_\sigma,r}(\mu):~~n_\sigma\in \mathbb{N}, \sum\limits_{\sigma\in \Gamma}n_\sigma\leq n\bigg\}.$$
	\end{corollary}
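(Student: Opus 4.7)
The plan is to lift the single-level estimate of Proposition \ref{le3.18} to the multi-level estimate indexed by the antichain $\Gamma$, using Lemma \ref{le3.15} as the bridge between the two. Concretely, Lemma \ref{le3.15} provides the key identity $\mu=\sum_{\sigma\in\Gamma}p_\sigma\,\mu\circ f_\sigma^{-1}$ with $\sum_{\sigma\in\Gamma}p_\sigma=1$, which plays exactly the role that $\mu=\sum_{i=1}^{N}p_i\,\mu\circ f_i^{-1}$ played in the proof of Proposition \ref{le3.18}.

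First, I would observe that for any $\sigma=\sigma_1\sigma_2\cdots\sigma_m\in\{1,2,\cdots,N\}^*$ the composite map $f_\sigma=f_{\sigma_1}\circ f_{\sigma_2}\circ\cdots\circ f_{\sigma_m}$ is again bi-Lipschitz, with upper Lipschitz constant $c_\sigma=c_{\sigma_1}c_{\sigma_2}\cdots c_{\sigma_m}\in(0,1)$ and lower constant $s_\sigma=s_{\sigma_1}s_{\sigma_2}\cdots s_{\sigma_m}\in(0,1)$, so that the hypotheses of Lemma \ref{le3.13} are met by each $f_\sigma$. Since $\mu$ is supported on the attractor, which is compact, the integrability condition $\int\|x\|^r\,d\mu(x)<\infty$ holds, and it also holds for each pushforward $\mu\circ f_\sigma^{-1}$.

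Next, I would fix any admissible collection $(n_\sigma)_{\sigma\in\Gamma}$ of positive integers with $\sum_{\sigma\in\Gamma}n_\sigma\leq n$. Applying Lemma \ref{le3.14} to the decomposition from Lemma \ref{le3.15} yields
$$V_{n,r}(\mu)\leq\sum_{\sigma\in\Gamma}p_\sigma V_{n_\sigma,r}\bigl(\mu\circ f_\sigma^{-1}\bigr),$$
and then applying Lemma \ref{le3.13} to the bi-Lipschitz map $f_\sigma$ (the proof of that lemma uses only the upper Lipschitz bound, which extends cleanly to the composite) gives $V_{n_\sigma,r}(\mu\circ f_\sigma^{-1})\leq c_\sigma^{r}V_{n_\sigma,r}(\mu)$. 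Chaining these two inequalities and then taking the minimum over all admissible collections $(n_\sigma)_{\sigma\in\Gamma}$ produces the stated bound.

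There is no substantive obstacle here; the only point that requires a brief comment is the hypothesis $n\geq|\Gamma|$, which is precisely what ensures that the collection of $(n_\sigma)_{\sigma\in\Gamma}$ with each $n_\sigma\in\mathbb{N}$ and $\sum_{\sigma\in\Gamma}n_\sigma\leq n$ is non-empty, so that the minimum on the right-hand side is well defined. The proof is essentially a re-run of Proposition \ref{le3.18} with $\{1,2,\cdots,N\}$ replaced by the maximal antichain $\Gamma$, and Lemma \ref{le3.15} is what makes this replacement legitimate.
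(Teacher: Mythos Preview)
Your proposal is correct and matches the paper's approach: the paper invokes Lemma \ref{le3.15} to obtain $\mu=\sum_{\sigma\in\Gamma}p_\sigma\,\mu\circ f_\sigma^{-1}$ and then declares that the remainder follows from the proof of Proposition \ref{le3.18}, which is exactly the re-run you spelled out via Lemmas \ref{le3.13} and \ref{le3.14}. Your additional remarks on the bi-Lipschitz constants of $f_\sigma$ and on why $n\geq|\Gamma|$ is needed only make explicit what the paper leaves implicit.
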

	\begin{proof}
		Since $\Gamma$ is a finite maximal antichain, by Lemma \ref{le3.15} measure $\mu$  satisfies 
		$$\mu=\sum\limits_{\sigma\in \Gamma}p_\sigma\mu\circ {f_\sigma}^{-1}.$$
		The rest of the proof of this corollary follows from the proof of Proposition \ref{le3.18}. 
	\end{proof}
	In the upcoming theorem, we give an upper bound of the quantization dimension for the invariant Borel probability measure supported on the attractor of a bi-Lipschitz WIFS without any separation condition on the IFS.
	\begin{theorem}\label{themlower}
		Let  $\mathcal{W}=\{\mathbb{R}^{d};~f_1,f_2,\cdots,f_N;~~p_1,p_2,\cdots,p_N\}$ be a WIFS such that each $f_i$ satisfies
		$s_i\|x-y\|\leq\|f_i(x)-f_i(y)\|\leq c_i\|x-y\|$, where $x,y\in \mathbb{R}^d$ and $0<s_i\leq c_i<1$ and each $p_i>0$. Let $\mu$ be the invariant Borel probability measure corresponding to the WIFS $\mathcal{W}$.
		Let $r\in (0,\infty)$ and $l_r\in (0,\infty)$ be the unique number such that
		$\sum\limits_{i=1}^{N}(p_i{c_i}^r)^{\frac{l_r}{r+l_r}}=1.$ Then
		$$\limsup_{n\to \infty}n\cdot e_{n,r}^{l_r}(\mu)<\infty,$$
		where $e_{n,r}(\mu)={V_{n,r}(\mu)}^{\frac{1}{r}}.$ Moreover $\overline {D_r}(\mu)\leq l_r,$ where $\overline {D_r}(\mu)$ denotes the upper quantization dimension of order $r$ of $\mu.$
	\end{theorem}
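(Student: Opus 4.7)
The plan is to apply Corollary \ref{co3.19} with a carefully chosen finite maximal antichain $\Gamma_\epsilon \subset \{1,\ldots,N\}^*$ to obtain a polynomial decay estimate $V_{n,r}(\mu) \leq C\, n^{-r/l_r}$; raising this to the $1/r$-th power will yield $n\cdot e_{n,r}^{l_r}(\mu) \leq C^{l_r/r}$, and the dimension bound $\overline{D_r}(\mu) \leq l_r$ will then follow by contraposition from Proposition \ref{prop2.12}(1).

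To set up the recursion, I would let $t := l_r/(r+l_r) \in (0,1)$ (so that $(1-t)/t = r/l_r$) and introduce the auxiliary probability vector $q_i := (p_i c_i^r)^t$. The defining equation of $l_r$ gives $\sum_{i=1}^N q_i = 1$, and extended multiplicatively $q_\sigma = (p_\sigma c_\sigma^r)^t$ for $\sigma \in \{1,\ldots,N\}^*$. For $\epsilon \in (0, q_{\min})$ with $q_{\min} := \min_i q_i$, the same combinatorial argument as Lemma \ref{le3.16} applied to $(q_i)$ shows that $\Gamma_\epsilon := \{\sigma : q_{\sigma^-} \geq \epsilon > q_\sigma\}$ is a finite maximal antichain, and the Bernoulli-measure content of Lemma \ref{le3.15} (valid for any probability vector on $\{1,\ldots,N\}$) yields $\sum_{\sigma \in \Gamma_\epsilon} q_\sigma = 1$. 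From the decomposition $q_\sigma = q_{\sigma^-}\, q_{\sigma_{|\sigma|}} \geq q_{\min}\,\epsilon$ on $\Gamma_\epsilon$ I conclude $|\Gamma_\epsilon| \leq 1/(q_{\min}\,\epsilon)$.

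For $n$ large, I would choose $\epsilon(n) := 1/(n\, q_{\min})$, ensuring $|\Gamma_\epsilon| \leq n$, and apply Corollary \ref{co3.19} with $n_\sigma = 1$ for every $\sigma \in \Gamma_\epsilon$ to get
\[
V_{n,r}(\mu) \leq V_{1,r}(\mu) \sum_{\sigma \in \Gamma_\epsilon} p_\sigma c_\sigma^r = V_{1,r}(\mu) \sum_{\sigma \in \Gamma_\epsilon} q_\sigma^{1/t}.
\]
Using $q_\sigma < \epsilon$ together with $(1-t)/t > 0$, I would split $q_\sigma^{1/t} = q_\sigma\, q_\sigma^{(1-t)/t} \leq \epsilon^{(1-t)/t}\, q_\sigma$, so that $\sum_{\sigma \in \Gamma_\epsilon} q_\sigma^{1/t} \leq \epsilon^{(1-t)/t} = \epsilon^{r/l_r}$, which inserted above gives $V_{n,r}(\mu) \leq C\, n^{-r/l_r}$ with $C := V_{1,r}(\mu)\, q_{\min}^{-r/l_r}$; note $V_{1,r}(\mu) < \infty$ because the invariant measure $\mu$ is supported on the compact attractor.

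Thus $e_{n,r}(\mu) \leq C^{1/r} n^{-1/l_r}$, so $n \cdot e_{n,r}^{l_r}(\mu) \leq C^{l_r/r}$ for all sufficiently large $n$, which is the $\limsup$ claim. The bound $\overline{D_r}(\mu) \leq l_r$ then follows at once, because if $\overline{D_r}(\mu) > l_r$ were true, Proposition \ref{prop2.12}(1) with $t_1 = l_r$ would force $\limsup_n n \cdot e_{n,r}^{l_r} = \infty$, contradicting what we just proved. I expect the main obstacle to be bookkeeping: verifying that the antichain results (Lemmas \ref{le3.15} and \ref{le3.16}) transfer from the WIFS probabilities $(p_i)$ to the auxiliary vector $(q_i)$, which is not a priori attached to the dynamics of $\mathcal{W}$; however, since both facts depend only on the Bernoulli product structure of the code space and the multiplicativity $q_\sigma = \prod_k q_{\sigma_k}$, the transfer is routine.
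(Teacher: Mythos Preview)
Your proposal is correct and follows essentially the same route as the paper: the paper also introduces the auxiliary probability vector $\xi_i=(p_ic_i^r)^{l_r/(r+l_r)}$, applies Lemmas~\ref{le3.15}--\ref{le3.16} to the antichain $\Gamma_\epsilon=\{\sigma:\xi_{\sigma^-}\ge\epsilon>\xi_\sigma\}$, bounds $|\Gamma_\epsilon|\le (\epsilon\,\xi_{\min})^{-1}$, invokes Corollary~\ref{co3.19}, and splits $p_\sigma c_\sigma^r=\xi_\sigma\cdot \xi_\sigma^{r/l_r}$ exactly as you do. The only cosmetic difference is that the paper carries a free integer $m_0$ (taking $n_\sigma=m_0$ and $\epsilon=\xi_{\min}^{-1}m_0/n$) rather than fixing $m_0=1$ as you do; this does not affect the argument or the conclusion.
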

	\begin{proof}
		Let $\xi_i=(p_i{c_i}^r)^{\frac{l_r}{r+l_r}}$. Then each $\xi_i>0$ and $\sum\limits_{i=1}^{N}\xi_i=1$. Therefore $(\xi_1,\xi_2,\cdots,\xi_N)$ is a probability vector. Let $\xi_{min}=\min\{\xi_1,\xi_2,\cdots,\xi_N\}$. Then we have $\xi_{min}>0$. Let $m_0 \in \mathbb{N}$ be fixed. Choose $n\in \mathbb{N}$ with the property $\frac{m_0}{n}<\xi_{min}^2$, this property holds for for all but finitely many values of $n\in \mathbb{N}.$ Set $\epsilon=\xi_{min}^{-1}\frac{m_0}{n}.$
		We define a set 
		$$\Gamma_\epsilon=\big\{\sigma\in \{1,2,\cdots,N\}^*;~~ \xi_{\sigma^-}\geq \epsilon >\xi_{\sigma}\big\}.$$
		Lemma \ref{le3.16} yields that $\Gamma_\epsilon$ is a finite maximal antichain. Therefore, in the light of Lemma \ref{le3.15}, we obtain 
		$$ 1=\sum_{\sigma \in \Gamma_{\epsilon}}\xi_{\sigma}
		=\sum_{\sigma \in \Gamma_{\epsilon}}\xi_{\sigma^{-}}\cdot \xi_{\sigma_{|\sigma|}}
		\geq \sum_{\sigma \in \Gamma_{\epsilon}}\epsilon\cdot \xi_{\sigma_{|\sigma|}}
		\geq \sum_{\sigma \in \Gamma_{\epsilon}}\epsilon\cdot \xi_{min}
		=\epsilon\cdot \xi_{min}\cdot |\Gamma_\epsilon|.$$
		
		Therefore, by the above, we get
		$ |\Gamma_{\epsilon}|\leq (\epsilon~\cdot \xi_{min})^{-1}=\frac{n}{m_0}.$ This can be also written as $\sum\limits_{\sigma\in \Gamma_{\epsilon}}m_0\leq n.$ So, by using Corollary \ref{co3.19}, we get the following inequality
		\begin{align*}
			V_{n,r}(\mu)&\leq \sum\limits_{\sigma\in \Gamma_{\epsilon}}p_\sigma c_{\sigma}^{r} V_{m_0,r}(\mu)\\
			&=\sum\limits_{\sigma\in \Gamma_{\epsilon}}(p_\sigma c_{\sigma}^{r})^{\frac{l_r}{r+l_r}}(p_\sigma c_{\sigma}^{r})^{\frac{r}{r+l_r}} V_{m_0,r}(\mu)\\
			&=\sum\limits_{\sigma\in \Gamma_{\epsilon}}\xi_\sigma (p_\sigma c_{\sigma}^{r})^{\frac{r}{r+l_r}} V_{m_0,r}(\mu).
		\end{align*}
		Since $\epsilon>\xi_\sigma=(p_\sigma{c_\sigma}^r)^{\frac{l_r}{r+l_r}}$ for $\sigma \in \Gamma_\epsilon,$ Therefore, by the above inequality, we conclude that
		\begin{align*}
			V_{n,r}(\mu)&\leq \sum_{\sigma\in \Gamma_\epsilon}\xi_{\sigma}\bigg({\epsilon^{\frac{r+l_r}{l_r}}}\bigg)^{\frac{r}{r+l_r}} V_{m_0,r}(\mu)\\
			&=\sum_{\sigma\in \Gamma_\epsilon}\xi_{\sigma}\cdot{\epsilon}^{\frac{r}{l_r}} \cdot V_{m_0,r}(\mu)\\
			&={\epsilon}^{\frac{r}{l_r}} \cdot V_{m_0,r}(\mu)\sum_{\sigma\in \Gamma_\epsilon}\xi_{\sigma}.
		\end{align*}
		Since $\epsilon={\xi_{\min}}^{-1}\frac{m_0}{n}$ and by Lemma \ref{le3.15}, we get
		\begin{align*}
			V_{n,r}(\mu)&\leq \xi_{min}^{\frac{-r}{l_r}}\bigg({\frac{m_0}{n}}\bigg)^{\frac{r}{l_r}}V_{m_0,r}(\mu)\\
			n^{\frac{r}{l_r}}\cdot V_{n,r}(\mu)&\leq \xi_{min}^{\frac{-r}{l_r}}\cdot{m_0}^{\frac{r}{l_r}}\cdot V_{m_0,r}(\mu)\\
			n \cdot e_{n,r}^{l_r}(\mu)&\leq \xi_{min}^{-1}\cdot m_0 \cdot e_{m_0,r}^{l_r}(\mu).
		\end{align*}
		The above inequality holds for all but finitely many values of n. Therefore, we get 
		$$\limsup_{n\to\infty} n\cdot e_{n,r}^{l_r}(\mu)\leq \xi_{min}^{-1}\cdot m_0 \cdot e_{m_0,r}^{l_r}(\mu)<\infty.$$
		By using Proposition \ref{prop2.12}, we get $\overline{D}_r(\mu)\leq l_r.$ 
		Thus, the proof of the theorem is done.
	\end{proof}
	Next, for obtaining a lower bound of the quantization  dimension of invariant Borel probability measure corresponding to a WIFS, we give some lemmas and propositions. 
	\begin{lemma}\label{le3.21}
		Let $\mathcal{W}=\{\mathbb{R}^{d};~f_1,f_2,\cdots,f_N;~~p_1,p_2,\cdots,p_N\}$ be a WIFS such that each $f_i$ satisfies
		$s_i\|x-y\|\leq\|f_i(x)-f_i(y)\|\leq c_i\|x-y\|$, where $x,y\in \mathbb{R}^d$ and $0<s_i\leq c_i<1$.  Let $\mu$ be the invariant Borel probability measure and $A_{\mathcal{W}}$ be the invariant attractor corresponding to the WIFS $\mathcal{W}$. Then for every $\epsilon>0$
		$$\inf\{\mu(B(x,\epsilon)): x\in A_{\mathcal{W}}\}>0,$$
		where $B(x,\epsilon)$ is the open ball of radius $\epsilon $ and center $x$.
	\end{lemma}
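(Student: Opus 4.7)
The plan is to exhibit, for each $x \in A_{\mathcal{W}}$, a cylinder of the form $f_{\sigma|_n}(A_{\mathcal{W}})$ contained in $B(x,\epsilon)$ whose $\mu$-mass is bounded below by a positive constant depending only on $\epsilon$. Let $D := \operatorname{diam}(A_{\mathcal{W}})$, which is finite since $A_{\mathcal{W}}$ is compact. If $D < \epsilon$ then $A_{\mathcal{W}} \subset B(x,\epsilon)$ and $\mu(B(x,\epsilon)) = 1$ for every $x$, and the case $N = 1$ is similarly trivial, so I may assume $D \geq \epsilon$ and $N \geq 2$.

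Fix $x \in A_{\mathcal{W}}$ and choose an address $\sigma = \sigma_1\sigma_2\cdots$ with $\phi_{\mathcal{W}}(\sigma) = x$. Writing $c_{\tau} := c_{\tau_1}\cdots c_{\tau_k}$ and $p_{\tau} := p_{\tau_1}\cdots p_{\tau_k}$ for a word $\tau = \tau_1\cdots \tau_k$, the upper Lipschitz bound yields $\operatorname{diam}(f_{\sigma|_n}(A_{\mathcal{W}})) \leq c_{\sigma|_n} D \to 0$. Let $n$ be the least integer with $c_{\sigma|_n} D < \epsilon$; then $x \in f_{\sigma|_n}(A_{\mathcal{W}}) \subset B(x,\epsilon)$. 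Since $\{1,\dots,N\}^{n}$ is a finite maximal antichain, Lemma~\ref{le3.15} gives $\mu = \sum_{|\tau|=n} p_\tau\, \mu \circ f_{\tau}^{-1}$, and hence
\[
\mu\bigl(f_{\sigma|_n}(A_{\mathcal{W}})\bigr) \geq p_{\sigma|_n}\,\mu(A_{\mathcal{W}}) = p_{\sigma|_n}.
\]

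It remains to bound $p_{\sigma|_n}$ from below by a constant depending only on $\epsilon$. Minimality of $n$ forces $c_{\sigma|_{n-1}} \geq \epsilon / D$, whence $c_{\sigma|_n} = c_{\sigma|_{n-1}} c_{\sigma_n} \geq c_{\min}\epsilon/D$, where $c_{\min} := \min_i c_i > 0$. Set $\alpha := \max_{1 \leq i \leq N} (\log p_i)/(\log c_i)$, which is a finite positive number because each $p_i, c_i \in (0,1)$ (using $N \geq 2$). Then $p_i \geq c_i^{\alpha}$ for every $i$, and consequently $p_{\sigma|_n} \geq c_{\sigma|_n}^{\alpha} \geq (c_{\min}\epsilon/D)^{\alpha}$. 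Combining with the previous inequality gives the uniform bound $\mu(B(x,\epsilon)) \geq (c_{\min}\epsilon/D)^{\alpha}$, independent of $x$. The main technical point is the comparison $p_i \geq c_i^{\alpha}$: this is the only step where the probability weights feed back into the geometry, and it requires each $p_i < 1$. Apart from this, the argument uses only Lipschitz contractivity and the iterated invariance of $\mu$, so no separation condition is needed, consistent with the statement.
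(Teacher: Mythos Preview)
Your proof is correct and follows the same core strategy as the paper: for each $x\in A_{\mathcal W}$, locate a cylinder $f_\tau(A_{\mathcal W})\subset B(x,\epsilon)$ with $c_\tau\,\operatorname{diam}(A_{\mathcal W})<\epsilon$, and then use the invariance relation for $\mu$ to obtain $\mu(f_\tau(A_{\mathcal W}))\ge p_\tau$. The only real difference is in how uniformity in $x$ is secured. The paper packages all the relevant cylinders into a single finite maximal antichain $\Gamma_{\epsilon_0}$ (defined via auxiliary probabilities $q_i=c_i^m$ with $\sum_i c_i^m=1$, so that $q_\sigma<\epsilon_0$ is exactly $c_\sigma<\epsilon/\operatorname{diam}(A_{\mathcal W})$) and then simply takes $p=\min_{\sigma\in\Gamma_{\epsilon_0}}p_\sigma>0$. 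You instead let the stopping level $n$ vary with the address of $x$ and recover uniformity through the explicit comparison $p_i\ge c_i^{\alpha}$, arriving at the quantitative bound $(c_{\min}\epsilon/D)^{\alpha}$. Your route avoids introducing the auxiliary exponent $m$ and yields a more explicit estimate; the paper's version is marginally shorter because finiteness of the antichain does all the work at once. Both arguments are complete and, as you note, neither uses any separation hypothesis.
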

	\begin{proof}
		It is  well-known that there is a unique $m\in \mathbb{R}$ such that $\sum\limits_{i=1}^{N}{c_i}^{m}=1$. Set $q_i={c_i}^m$. Then $(q_1,q_2,\cdots,q_N)$ is a probability vector. Let $\epsilon_0=\big(\frac{\epsilon}{\text{diam}A_\mathcal{W}}\big)^{m}.$ Then $\Gamma_{\epsilon_0}=\{\sigma\in\{1,2,\cdots,N\}^*:~  q_{{\sigma}^-}\geq \epsilon_0 > q_\sigma\}$ is a finite maximal antichain. Let $x\in A_\mathcal{W}$. Since $A_{\mathcal{W}}=\cup_{\sigma\in \Gamma_{\epsilon_0}}f_\sigma(A_{\mathcal{W}})$, there is $\sigma_0\in \Gamma_{\epsilon_0}$ such that $x\in f_{\sigma_0}(A_\mathcal{W}).$ Let $x_1,x_2\in A_\mathcal{W}.$ Then
		$$\|f_{\sigma_0}(x_1)-f_{\sigma_0}(x_2)\| \leq c_{\sigma_0}\|x_1-x_2\|.$$
		From the above inequality, we can conclude that  $\text{diam}(f_{\sigma_0}(A_{\mathcal{W}}))\leq c_{\sigma_0}~\text{diam}(A_{\mathcal{W}})$. Thus \\ $ \text{diam}(f_{\sigma_0}(A_{\mathcal{W}}))\leq \epsilon $, which further yields that $f_{\sigma_0}(A_{\mathcal{W}})\subseteq B(x,\epsilon)$. Since $\mu$ satisfies $\mu=\sum\limits_{\sigma\in \Gamma_{\epsilon_0}}p_\sigma\mu\circ {f_\sigma}^{-1}$,  we have
		\begin{align*}
			\mu(f_{\sigma_0}(A_{\mathcal{W}}))&=\sum\limits_{\sigma\in \Gamma_{\epsilon_0}}p_\sigma\mu\circ {f_\sigma}^{-1}(f_{\sigma_0}(A_{\mathcal{W}}))\\
			&\geq p_{\sigma_0}\mu\circ {f_{\sigma_0}}^{-1}(f_{\sigma_0}(A_{\mathcal{W}}))\\
			&=p_{\sigma_0}.
		\end{align*}
		Let $p=\min\{p_\sigma;~~\sigma\in \Gamma_{\epsilon_0}\}>0$. Then by the above inequality $$\mu(B(x,\epsilon))\geq \mu(f_{\sigma_0}(A_{\mathcal{W}}))\geq p>0.$$
		This holds for arbitrary $x\in A_\mathcal{W}.$ Therefore, we get our assertion. 
	\end{proof}
	\begin{lemma}\label{lem3.22}
		Let $\mathcal{W}$ as in the above lemma satisfy strong open set condition and  $\mu $ be the invariant measure corresponding to $\mathcal{W}.$ Then for each $m\in \mathbb{N}$, there is a WIFS $\mathcal{L}_m=\{\mathbb{R}^d;~~g_i~; ~p_i~~\forall~~i\in \{1,2,\cdots,N\}^m\}$  such that $S_i\|x-y\|\leq\|g_i(x)-g_i(y)\|\leq C_i\|x-y\|$, where $x,y\in \mathbb{R}^d$ and $0<S_i\leq C_i<1$ and the  WIFS $\mathcal{L}_m$ satisfies SSC. Furthermore if $\mu_m^*$ is the invariant Borel probability measure corresponding to the IFS $\mathcal{L}_m$, then there exists $n_0\in \mathbb{N}$ such that for each $n\geq n_0$ there exists $n_i\in \mathbb{N}$ such that $\sum\limits_{i\in \{1,2,\cdots,N\}^m}n_i\leq n$ and
		$$V_{n,r}(\mu_m^*)\geq  \sum_{i\in \{1,2,\cdots,N\}^m}p_i{S_i}^rV_{n_i,r}(\mu_m^*),$$
		for any  $r\in (0,\infty)$.
	\end{lemma}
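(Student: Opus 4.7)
The plan is to perform the standard SOSC-to-SSC reduction to produce $\mathcal{L}_m$ (as in the proof of Theorem \ref{bidi}), and then to establish the quantization-error inequality by distributing an $n$-optimal set among the pairwise disjoint SSC pieces once $n$ is large enough that the mesh of such a set is smaller than the gap between pieces.

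First, since $\mathcal{W}$ satisfies SOSC, choose an open set $U$ with $\bigcup_{i=1}^{N} f_i(U) \subset U$, $f_i(U) \cap f_j(U) = \emptyset$ for $i \neq j$, and $U \cap A_{\mathcal{W}} \neq \emptyset$. Pick $\omega \in \{1,2,\dots,N\}^*$ so that $f_\omega(A_{\mathcal{W}}) \subset U$; such an $\omega$ exists because iterating the dynamics sufficiently many times maps a neighborhood of a point of $U \cap A_{\mathcal{W}}$ inside $U$. For each $m \in \mathbb{N}$, define
\[
\mathcal{L}_m = \{\mathbb{R}^d;\, g_i;\, p_i : i \in \{1,2,\dots,N\}^m\}, \qquad g_i := f_{i\omega}, \qquad p_i := p_{i_1}\cdots p_{i_m}.
\]
Since each $g_i$ is a finite composition of bi-Lipschitz contractions, it is bi-Lipschitz with constants $S_i = s_{i_1}\cdots s_{i_m}\, s_{\omega}$ and $C_i = c_{i_1}\cdots c_{i_m}\, c_{\omega}$. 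Arguing as in the proof of Theorem \ref{bidi}, $g_i(A_{\mathcal{W}}) \subset f_{i_1}(U)$ for each $i \in \{1,\dots,N\}^m$, so the family $\{g_i(A_{\mathcal{W}})\}$ is pairwise disjoint; since the attractor $A^*_m$ of $\mathcal{L}_m$ is contained in $A_{\mathcal{W}}$, SSC for $\mathcal{L}_m$ follows.

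For the quantization inequality, set $\delta := \min_{i \neq j} d(g_i(A^*_m), g_j(A^*_m)) > 0$, positive by SSC and compactness. Lemma \ref{le2.9} gives $V_{n,r}(\mu_m^*) \to 0$, while a verbatim copy of Lemma \ref{le3.21} applied to $\mathcal{L}_m$ yields $\inf_{x \in A^*_m} \mu_m^*(B(x, \epsilon)) > 0$ for each fixed $\epsilon > 0$. Combining with Lemma \ref{le2.11}, we conclude $\|A_n\|_\infty \to 0$ along any sequence of $n$-optimal sets $A_n$ of $\mu_m^*$. Fix $n_0$ so that $\|A_n\|_\infty < \delta/3$ for all $n \geq n_0$. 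For such $n$, partition $A_n = \bigsqcup_i A_n^i$ by assigning each $a \in A_n$ whose Voronoi cell meets $A^*_m$ to the unique piece $g_i(A^*_m)$ closest to it (unique thanks to the separation $\delta$ and a triangle inequality). Setting $n_i := |A_n^i|$, we have $\sum_i n_i \leq n$ and $n_i \geq 1$, and for every $x \in g_i(A^*_m)$ the nearest element of $A_n$ lies in $A_n^i$. Using the invariance $\mu_m^* = \sum_i p_i\, \mu_m^* \circ g_i^{-1}$,
\[
V_{n,r}(\mu_m^*) = \sum_i p_i \int \min_{a \in A_n}\|g_i(y) - a\|^r\, d\mu_m^*(y) \geq \sum_i p_i \int \min_{a \in A_n^i}\|g_i(y) - a\|^r\, d\mu_m^*(y) \geq \sum_i p_i\, V_{n_i,r}(\mu_m^* \circ g_i^{-1}).
\]

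To close the argument, we transfer $V_{n_i,r}(\mu_m^* \circ g_i^{-1})$ back to $\mu_m^*$. The inverse $g_i^{-1} : g_i(\mathbb{R}^d) \to \mathbb{R}^d$ is Lipschitz with constant $1/S_i$, and by Kirszbraun's theorem it admits a $1/S_i$-Lipschitz extension $\tilde g_i : \mathbb{R}^d \to \mathbb{R}^d$. Since $\tilde g_i = g_i^{-1}$ on the support of $\mu_m^* \circ g_i^{-1}$, we have $(\mu_m^* \circ g_i^{-1}) \circ \tilde g_i^{-1} = \mu_m^*$, and Lemma \ref{le3.13} applied to $\tilde g_i$ yields $V_{n_i,r}(\mu_m^*) \leq S_i^{-r}\, V_{n_i,r}(\mu_m^* \circ g_i^{-1})$, which rearranges to the required bound. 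The two main obstacles are (i) ensuring that the $A_n^i$ form an honest disjoint distribution with $\sum_i n_i \leq n$, which rests on the uniform bound $\|A_n\|_\infty < \delta/3$, and (ii) transferring the quantization error back through $g_i$ despite the fact that the contraction $g_i$ has proper image, a difficulty resolved via the Kirszbraun extension.
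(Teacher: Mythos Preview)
Your proof is correct and follows essentially the same route as the paper: the same SOSC-to-SSC reduction via a word $\omega$ with $f_\omega(A_{\mathcal{W}})\subset U$, the same Voronoi distribution of an $n$-optimal set once $\|A_n\|_\infty$ is small relative to the gap $\delta$, and the same invariance-based splitting of $V_{n,r}(\mu_m^*)$.

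The one substantive difference is your final step. You introduce a Kirszbraun extension $\tilde g_i$ of $g_i^{-1}$ to push $V_{n_i,r}(\mu_m^*\circ g_i^{-1})$ back to $V_{n_i,r}(\mu_m^*)$, citing the obstacle that $g_i$ ``has proper image.'' In fact this obstacle does not exist: a bi-Lipschitz self-map of $\mathbb{R}^d$ is automatically a bijection (injective by the lower bound; the image is open by invariance of domain and closed by completeness). The paper therefore simply writes each $a\in A_{n_i}$ as $g_i(b)$ with $b=g_i^{-1}(a)$ and uses the lower bi-Lipschitz bound directly:
\[
\min_{a\in A_{n_i}}\|g_i(x)-a\|^r \;\ge\; S_i^{\,r}\min_{b\in g_i^{-1}(A_{n_i})}\|x-b\|^r \;\ge\; S_i^{\,r}\, V_{n_i,r}(\mu_m^*).
\]
Your Kirszbraun detour is not wrong, but note that Lemma~\ref{le3.13} as stated requires a bi-Lipschitz map with constants in $(0,1)$, while $\tilde g_i$ is merely $S_i^{-1}$-Lipschitz; you are really invoking the \emph{proof} of that lemma (which uses only the upper Lipschitz bound), and you should say so explicitly.
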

	\begin{proof}
		Since $\mathcal{W}$ satisfies the strong open set condition, there exists an open set $U$ such that 
		$$\cup_{i=1}^Nf_i(U) \subset U,~~U\cap A_\mathcal{W}\ne\emptyset,
		f_i(U) \cap f_j(U)= \emptyset ~~ \text{for all}~ i \ne j.$$
		Since $U\cap A_\mathcal{W}\ne\emptyset,$ there exists $\sigma\in \{1,2,\cdots,N\}^*$ such that $f_\sigma(A_\mathcal{W})\subseteq U$. Set $g_i=f_i\circ f_\sigma$ for $ i\in \{1,2,\cdots,N\}^m$. For $x,y\in \mathbb{R}^d$, we have 
		$$ s_is_\sigma\|x-y\|\leq\|g_i(x)-g_i(y)\|\leq c_ic_\sigma\|x-y\|.$$ Hence 
		$$ S_i\|x-y\|\leq\|g_i(x)-g_i(y)\|\leq C_i\|x-y\|,$$
		where $S_i= s_is_\sigma\in (0,1)$ and $C_i=c_ic_\sigma\in (0,1).$ Therefore $\mathcal{L}_m=\{\mathbb{R}^d;~~g_i~~~ \forall~~i\in \{1,2,\cdots,N\}^m\}$ is an IFS.  Since  $\sum\limits_{i\in \{1,2,\cdots,N\}^m}p_{i\sigma}=p_\sigma\ne 1$, we have  $\sum\limits_{i\in \{1,2,\cdots,N\}^m}\frac{p_{i\sigma}}{p_\sigma}=\sum\limits_{i\in \{1,2,\cdots,N\}^m}p_i= 1$. 
		Therefore $\mathcal{L}_m=\{\mathbb{R}^d;~~g_i~; ~p_i~~\forall~~i\in \{1,2,\cdots,N\}^m\}$ is a WIFS.
		Let $A_{{m}}^*$ be the invariant attractor and let $\mu_m^*$ be the invariant Borel probability measure corresponding to WIFS $\mathcal{L}_m$.  Analysing the code space of both IFS $\mathcal{W}$ and $\mathcal{L}_m,$ one can  easily show that $A_{m}^*\subseteq A_\mathcal{W}$. Since $f_i(U) \cap f_j(U)= \emptyset ~~ \text{for all}~ i \ne j\in \{1,2,\cdots,N\}^m$ and $f_\sigma(A_\mathcal{W})\subseteq U$, we get $g_{i}(A_{m}^*) \cap g_{j}(A_{m}^*)= \emptyset ~~ \text{for all}~ i \ne j\in \{1,2,\cdots,N\}^m$. Thus the WIFS  $\mathcal{L}_m=\{\mathbb{R}^d;~~g_i~; ~p_i~~\forall~~i\in \{1,2,\cdots,N\}^m\}$ satisfies strong separation condition. 
		Set $\delta_0=\min\limits_{i\ne j\in \{1,2,\cdots,N\}^m}\bigg\{\min\limits_{a\in g_i(A_{m}^*)}\min\limits_{b\in g_j(A_{m}^*)}\|a-b\|\bigg\}$. Since the WIFS $\mathcal{L}_m$ satisfies strong separation condition, we have $\delta_0>0$. Lemma \ref{le3.21} yields that
		$$\delta= \inf\bigg\{\bigg(\frac{\delta_0}{8}\bigg)^r\mu_m^*\bigg(B\bigg(x,\frac{\delta_0}{8}\bigg)\bigg):~~x\in A_{m}^*\bigg\}>0.$$
		By using Lemma \ref{le2.9}, we have $V_{n,r}(\mu_m^*)\to 0$ as $n\to \infty$. Then there is an $n_0\in \mathbb{N}$ such that $V_{n,r}(\mu_m^*)<\delta $ for all $n\geq n_0.$
		Let $A_n$ be an $n$-optimal set for measure $\mu_m^*$ of order $r$. In the light of Lemma \ref{le2.11}, for all $n\geq n_0$, we have 
		$$\bigg(\frac{\|A_n\|_{\infty}}{2}\bigg)^r\min\limits_{x\in A_{m}^*}\mu_m^*\bigg(B\bigg(x,\frac{\|A_n\|_{\infty}}{2}\bigg)\bigg)<\delta.$$
		It can be easily shown that for $r\in (0,\infty)$, function $\phi: (0,\infty)\to \mathbb{R}$ defined by $\phi(t)=t^r\min\limits_{x\in A_{m}^*}\mu_m^*(B(x,t))$ is an increasing function. This further yields that
		$$\frac{\|A_n\|_{\infty}}{2}<\frac{\delta_0}{8}$$ and hence 
		$$\|A_n\|_{\infty}<\frac{\delta_0}{4}$$
		for all $n\geq n_0$. We define $ A_{n_i}=\{a\in A_n; ~~ W(a|A_n)\cap g_i(A_{m}^*)\ne\emptyset\}$ for $i\in \{1,2,\cdots,N\}^m$ and let $n_i$ be the cardinality of $A_{n_i}$. By the definition of $\|A_n\|_\infty$ and $\delta_0$, we conclude that for all $n\geq n_0$, $A_{n_i}\cap A_{n_j}=\emptyset$ for $i\ne j\in\{1,2,\cdots,N\}^m $ and $\sum\limits_{i\in \{1,2,\cdots,N\}^m }n_i\leq n.$ Now for $n\geq n_0$, we have
		\begin{align*}
			V_{n,r}(\mu_m^*)&=\int\min_{a\in A_n}\|x-a\|^r d\mu_m^*(x)\\
			&=\sum_{i\in \{1,2,\cdots,N\}^m}p_i\int\min_{a\in A_n}\|g_i(x)-a\|^r d\mu_m^*(x)\\
			&=\sum_{i\in \{1,2,\cdots,N\}^m}p_i\int\min_{a\in A_{n_i}}\|g_i(x)-a\|^r d\mu_m^*(x)\\
			&\geq \sum_{i\in \{1,2,\cdots,N\}^m}p_i{S_i}^r\int\min_{b\in {g_i}^{-1}(A_{n_i})}\|x-b\|^r d\mu_m^*(x)\\
			&\geq \sum_{i\in \{1,2,\cdots,N\}^m}p_i{S_i}^r V_{n_i,r}(\mu_m^*).
		\end{align*}
		This completes the result.
	\end{proof} 
	\begin{proposition}\label{new917}
		Let $\mathcal{W}$ and $\mathcal{L}_m$ be two WIFSs as defined in the above lemma. For $r \in (0,\infty)$, let $k_{m,r}\in (0,\infty)$ be the unique number such that $\sum\limits_{i\in \{1,2,\cdots,N\}^m}({p_i{S_i}^r})^\frac{k_{m,r}}{r+k_{m,r}}=1.$ Then $$\liminf_{n\to \infty}n\cdot e_{n,r}^{l_0}(\mu_m^*)>0$$ for any $l_0\in (0,k_{m,r})$. Moreover $k_{m,r}\leq \underline{D}_r(\mu_m^*)$, where $\underline{D}_r(\mu_m^*)$ denotes the lower quantization dimension of measure $\mu_m^*$ of order $r$.
	\end{proposition}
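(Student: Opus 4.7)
The plan is to establish the stronger claim that there exists a constant $M>0$ with
\[
V_{n,r}(\mu_m^*) \;\geq\; M\, n^{-r/l_0} \qquad \text{for every } n\in\mathbb N,
\]
and then read off the statement about $\underline{D}_r(\mu_m^*)$. Fix $t_0 = l_0/(r+l_0)$. Since $l_0 < k_{m,r}$, the defining relation for $k_{m,r}$ together with the strict monotonicity of $t\mapsto \sum_{I\in\{1,\dots,N\}^m}(p_I S_I^r)^t$ implies that
\[
A \;:=\; \sum_{I\in\{1,\dots,N\}^m}(p_I S_I^r)^{t_0} \;>\; 1,
\]
and this strict inequality is the entire engine of the argument.

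The analytic core is the Hölder-type estimate: for all positive integers $n_I$ with $\sum_I n_I \leq n$,
\[
\sum_{I} p_I\, S_I^r\, n_I^{-r/l_0} \;\geq\; A^{1/t_0}\, n^{-r/l_0}.
\]
I would obtain this by factoring $(p_I S_I^r)^{t_0} = \bigl[(p_I S_I^r)\,n_I^{-r/l_0}\bigr]^{t_0}\cdot n_I^{\,1-t_0}$, which exploits the identity $t_0\,r/l_0 = 1 - t_0$, summing over $I$, and applying Hölder's inequality with conjugate exponents $1/t_0$ and $1/(1-t_0)$ together with $\sum_I n_I \leq n$.

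With that estimate in hand, the main claim follows by strong induction on $n$. Because $\mathcal{L}_m$ satisfies SSC with $N^m \geq 2$ maps its attractor $A_m^*$ is infinite, so $V_{n,r}(\mu_m^*) > 0$ for every $n$; this lets me choose $M>0$ small enough that the claim holds for all $1\leq n < n_0$, where $n_0$ comes from Lemma \ref{lem3.22}. For the inductive step at $n\geq n_0$, Lemma \ref{lem3.22} supplies integers $n_I\geq 1$ with $\sum_I n_I\leq n$ and $V_n\geq \sum_I p_I S_I^r V_{n_I}$; since each $n_I \leq n-(N^m-1) < n$, the inductive hypothesis gives $V_{n_I}\geq M\,n_I^{-r/l_0}$, and combining with the Hölder estimate yields
\[
V_n \;\geq\; M\, A^{1/t_0}\, n^{-r/l_0} \;\geq\; M\, n^{-r/l_0},
\]
where the factor $A^{1/t_0}>1$ is the reserve that keeps the induction on.

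Finally, the claim gives $n\cdot e_{n,r}^{l_0}(\mu_m^*)\geq M^{l_0/r}>0$ for every $n$, hence $\liminf_{n\to\infty} n\cdot e_{n,r}^{l_0}(\mu_m^*)>0$, and the contrapositive of Proposition \ref{prop2.12}(2) forces $l_0\leq \underline{D}_r(\mu_m^*)$. Letting $l_0\nearrow k_{m,r}$ completes the proof. The main obstacle is arranging the Hölder manipulation so that the exponents of $n_I$ pair cleanly with the constraint $\sum_I n_I\leq n$; once the identity $t_0\,r/l_0 = 1-t_0$ is spotted, the rest of the argument is careful but routine bookkeeping.
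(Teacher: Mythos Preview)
Your proposal is correct and follows essentially the same route as the paper: both argue by strong induction that $n^{r/l_0}V_{n,r}(\mu_m^*)$ is bounded below, using the sub-self-similarity inequality from Lemma~\ref{lem3.22} together with a H\"older step (you phrase it as ordinary H\"older via the identity $t_0\,r/l_0 = 1-t_0$, the paper as H\"older for negative exponents, but these are the same inequality). The only cosmetic difference is that you explicitly justify $n_I<n$ from $N^m\ge 2$ and note that $A_m^*$ is infinite to ensure $V_{n,r}>0$, points the paper leaves implicit.
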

	\begin{proof}
		Using the fact that $ t\longmapsto \sum\limits_{i\in\{1,2,\cdots,N\}^m}(p_i{S_i}^r)^t$ is a strictly decreasing function  and \\ $\sum\limits_{i\in \{1,2,\cdots,N\}^m}(p_i{S_i}^r)^{\frac{k_{m,r}}{r+k_{m,r}}}=1,$ then for $0<l_0<k_{m,r}$, we have 
		$$\sum\limits_{i\in\{1,2,\cdots,N\}^m }(p_i{S_i}^r)^{\frac{l_0}{r+l_0}}>\sum\limits_{i\in \{1,2,\cdots,N\}^m}(p_i{S_i}^r)^{\frac{k_{m,r}}{r+k_{m,r}}}=1.$$
		Let $n_0$ be as in the above lemma. Let $C_*=\min\{n^{\frac{r}{l_0}}\cdot V_{n,r}(\mu_m^*); ~~n<n_0\}.$ It is easy to show that $V_{n,r}(\mu_m^*)>0$, from which we deduce that $C_*>0.$ Our aim is to show that $n^{\frac{r}{l_0}}\cdot V_{n,r}(\mu_m^*)\geq C_*$ for all $n\in\mathbb{N}$. Clearly the inequality hold for $n<n_0$ by the definition of $C_*$. For $n\geq n_0$, we prove it by induction on $n\in \mathbb{N}.$ Let $n\geq n_0$ and $\eta^{\frac{r}{l_0}}\cdot V_{\eta,r}(\mu_m^*)\geq C_*$ hold for all $\eta<n.$ Lemma \ref{lem3.22} yields that there are numbers $n_i\in \mathbb{N}$ such that $\sum\limits_{i\in \{1,2,\cdots,N\}^m}n_i\leq n$ and
		\begin{align*}
			n^{\frac{r}{l_0}} V_{n,r}(\mu_m^*)&\geq  n^{\frac{r}{l_0}}  \sum_{i\in \{1,2,\cdots,N\}^m}p_i{S_i}^rV_{n_i,r}(\mu_m^*)\\
			&= n^{\frac{r}{l_0}} \sum_{i\in \{1,2,\cdots,N\}^m}p_i{S_i}^r {n_i}^{\frac{-r}{l_0}} {n_i}^{\frac{r}{l_0}} V_{n_i,r}(\mu_m^*)\\
			&\geq C_* n^{\frac{r}{l_0}} \sum_{i\in \{1,2,\cdots,N\}^m}p_i{S_i}^r {n_i}^{\frac{-r}{l_0}}\\
			&= C_* \sum_{i\in \{1,2,\cdots,N\}^m}p_i{S_i}^r {\bigg(\frac{n_i}{n}}\bigg)^{\frac{-r}{l_0}}.
		\end{align*}
		Using H\"older's inequality for negative exponents, we get
		$$n^{\frac{r}{l_0}} V_{n,r}(\mu_m^*)\geq C_*\bigg(\sum\limits_{i\in \{1,2,\cdots,N\}^m}(p_i{S_i}^r)^{\frac{l_0}{r+l_0}}\bigg)^{1+\frac{r}{l_0}}\bigg(\sum\limits_{i\in \{1,2,\cdots,N\}^m}{\bigg(\frac{n_i}{n}}\bigg)^{\frac{-r}{l_0}\cdot \frac{-l_0}{r}}\bigg)^{\frac{-r}{l_0}}.$$
		Since $\sum\limits_{i\in\{1,2,\cdots,N\}^m}(p_i{S_i}^r)^{\frac{l_0}{r+l_0}}>1 $ and $\sum\limits_{i\in \{1,2,\cdots,N\}^m}\frac{n_i}{n}\leq 1$, we have 
		$$n^{\frac{r}{l_0}} V_{n,r}(\mu_m^*)\geq C_*.$$
		Therefore, by induction on $n\in \mathbb{N}$, $n^{\frac{r}{l_0}} V_{n,r}(\mu_m^*)\geq C_*$ hold for all $n\in \mathbb{N}.$ Thus, we deduce that 
		$$\liminf_{n\to \infty}n\cdot e_{n,r}^{l_0}(\mu_m^*)\geq {C_*}^{\frac{l_0}{r}}>0.$$
		Proposition \ref{prop2.12} implies that $k_{m,r}\leq \underline{D}_r(\mu_m^*).$
		The proof is complete.
	\end{proof}
	In the following theorem, we determine a lower bound of the quantization of the invariant Borel probability measure supported on the attractor of a bi-Lipschitz WIFS
	under the SOSC.
	\begin{theorem}\label{themupper}
		Let $\mathcal{W}$ be the WIFS  defined as in Lemma \ref{le3.21} and assume that it satisfies the SOSC. For $r \in (0,\infty)$, we have $\underline{D}_r(\mu) \ge k_r,$ where $\sum\limits_{i=1}^{N}({p_i{s_i}^r})^\frac{k_r}{r+k_r}=1.$
	\end{theorem}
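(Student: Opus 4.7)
The plan is to adapt the Graf--Luschgy style argument of Proposition \ref{new917} directly to $\mu$, rather than $\mu_m^*$, using the sub-IFS $\mathcal{L}_m$ of Lemma \ref{lem3.22} only to supply the geometric separation needed for the Voronoi step, and then to let $m \to \infty$. Fix $m$ and the auxiliary word $\sigma$ used in Lemma \ref{lem3.22}. Iterating $\mu = \sum_{k=1}^N p_k\,\mu\circ f_k^{-1}$ to length $m+|\sigma|$ and retaining only the $N^m$ summands indexed by words of the form $i\sigma$, $i\in\{1,\ldots,N\}^m$, gives
\[
\mu \;\ge\; p_\sigma \sum_{i \in \{1,\ldots,N\}^m} p_i\, \mu \circ g_i^{-1}, \qquad g_i = f_i\circ f_\sigma.
\]
Because $f_\sigma(A_\mathcal{W})\subseteq U$ and the cylinders $f_i(U)$ (for $i \in \{1,\ldots,N\}^m$) are pairwise disjoint by iterated OSC, the compact sets $g_i(A_\mathcal{W})$ are pairwise separated by some $\delta_0=\delta_0(m)>0$. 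Lemma \ref{le3.21} applied to $\mu$, together with Lemmas \ref{le2.9} and \ref{le2.11}, forces $\|A_n\|_\infty\to 0$ along $n$-optimal sets for $\mu$, so $\|A_n\|_\infty<\delta_0/4$ for all $n$ large enough.

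The Voronoi partition argument of Lemma \ref{lem3.22} can then be re-run with $A_m^*$ replaced by $A_\mathcal{W}$ and $\mu_m^*$ by $\mu$: for all such $n$ there exist $n_i\in\mathbb{N}$ with $\sum_i n_i\le n$ and
\[
V_{n,r}(\mu) \;\ge\; p_\sigma \sum_{i\in\{1,\ldots,N\}^m} p_i\, S_i^r\, V_{n_i,r}(\mu), \qquad S_i=s_is_\sigma.
\]
Feeding this recursion into the induction of Proposition \ref{new917} (the extra factor $p_\sigma$ is harmless), H\"older's inequality with negative exponent yields $\liminf_n n\cdot e_{n,r}^{l_m}(\mu)>0$ for every $l_m>0$ satisfying
\[
p_\sigma \Bigl(\sum_{i\in\{1,\ldots,N\}^m}(p_iS_i^r)^{l_m/(r+l_m)}\Bigr)^{(r+l_m)/l_m} > 1,
\]
so Proposition \ref{prop2.12} gives $\underline{D}_r(\mu)\ge l_m^*$, where $l_m^*$ is the critical exponent at which equality holds in the display above.

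Substituting $S_i=s_is_\sigma$ and using the multiplicativity $\sum_{i\in\{1,\ldots,N\}^m}(p_is_i^r)^t = \Phi(t)^m$ with $\Phi(t)=\sum_{j=1}^N(p_js_j^r)^t$, the defining equation for $l_m^*$ rewrites as $\Phi(t_m^*)^m = (p_\sigma s_\sigma^r)^{-t_m^*}$ with $t_m^*=l_m^*/(r+l_m^*)$. Taking $m$-th roots, $\Phi(t_m^*)\to 1$ as $m\to\infty$, and hence by the strict monotonicity of $\Phi$ we have $t_m^*\to k_r/(r+k_r)$, i.e. $l_m^*\to k_r$. Combined with $\underline{D}_r(\mu)\ge l_m^*$ for every $m$, this yields $\underline{D}_r(\mu)\ge k_r$. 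I expect the main technical obstacle to be the Voronoi step: establishing $\min_{a\in A_{n_i}}\|g_i(y)-a\|\ge S_i\min_{b\in g_i^{-1}(A_{n_i})}\|y-b\|$ requires handling those $a\in A_{n_i}$ that lie outside the range of $g_i$, which is customarily done by projecting each such $a$ onto its nearest point of $g_i(A_\mathcal{W})$ and absorbing the resulting $O(\|A_n\|_\infty)$ error into the inductive constant $C_*$ supplied by Proposition \ref{new917}.
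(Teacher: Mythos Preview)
Your approach is correct and genuinely different from the paper's. The paper never runs the recursion against $\mu$ itself: it applies Proposition~\ref{new917} to the auxiliary invariant measure $\mu_m^*$ of the sub-WIFS $\mathcal{L}_m$, obtaining $k_{m,r}\le \underline{D}_r(\mu_m^*)$, and then argues separately that $\underline{D}_r(\mu_m^*)\le \underline{D}_r(\mu)$ by squeezing through the lower box dimension of $A_m^*$ (choosing the seed word $\tau$ long enough so that $\underline{\dim}_B(A_m^*)\le \underline{D}_r(\mu)$, and invoking $\underline{D}_r(\mu_m^*)\le \underline{\dim}_B(A_m^*)$ from \cite{GL1}); only then does it pass to the limit $m\to\infty$ by a contradiction estimate. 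Your route bypasses $\mu_m^*$ entirely: the sub-measure inequality $\mu\ge p_\sigma\sum_i p_i\,\mu\circ g_i^{-1}$ lets the Voronoi/H\"older induction of Proposition~\ref{new917} run directly on $V_{n,r}(\mu)$, producing $\underline{D}_r(\mu)\ge l_m^*$ with $l_m^*\to k_r$. This is more direct and avoids the box-dimension detour, at the cost of carrying the constant $p_\sigma$ through the induction; the paper's route has the advantage of isolating the recursion in a clean SSC setting but needs the extra comparison step.

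One remark on your flagged technical obstacle: the worry is unfounded. Each $g_i=f_i\circ f_\sigma$ is a bi-Lipschitz map $\mathbb{R}^d\to\mathbb{R}^d$; by invariance of domain it is an open map, and the lower Lipschitz bound makes its image closed, so $g_i$ is a bijection of $\mathbb{R}^d$ and $g_i^{-1}$ is globally defined with $\|g_i(y)-a\|\ge S_i\|y-g_i^{-1}(a)\|$ for all $a\in\mathbb{R}^d$. Thus the Voronoi step goes through exactly as in Lemma~\ref{lem3.22}, with no projection or error absorption needed. (Your proposed fix---absorbing an additive $O(\|A_n\|_\infty)$ error into $C_*$---would not have closed the induction anyway, since the error is of the same order as the integrand; had the surjectivity genuinely failed, the clean remedy would be the inequality $2\|g_i(y)-a\|\ge \|g_i(y)-g_i(b_a)\|\ge S_i\|y-b_a\|$ with $b_a$ the minimiser of $\|g_i(\cdot)-a\|$ over $A_\mathcal{W}$, yielding a harmless multiplicative factor $2^{-r}$ alongside $p_\sigma$.)
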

	\begin{proof}
		Since the IFS $\mathcal{W}$ satisfies the SOSC, there exists an open set $U$  of $\mathbb{R}^d$ such that    \[\cup_{i=1}^N f_i(U) \subset U,~~U\cap A_\mathcal{W}\ne\emptyset,
		f_i(U) \cap f_j(U)= \emptyset ~~ \forall~ i \ne j,~~1\leq i,j\leq N.\]
		Since $U\cap A_{\mathcal{W}}\ne \emptyset$, there exists a $\sigma \in \{1,2,\cdots,N\}^*$ such that $f_\sigma(A_\mathcal{W})\subset U$. We denote  $ f_\sigma(A_\mathcal{W})$ by $ (A_\mathcal{W})_\sigma$ for any $\sigma\in \{1,2,\cdots,N\}^*$. Now, by using the condition $f_i(U) \cap f_j(U)= \emptyset ~~ \forall~ i \ne j,~~1\leq i,j\leq N$, it is clear that for each $m\in \mathbb{N}$, the sets $\{(A_\mathcal{W})_{i\sigma}: i \in \{1,2,\cdots,N\}^{m} \}$ are pairwise disjoint. We define a WIFS $\mathcal{L}_m=\{\mathbb{R}^d;f_{i\sigma};~~p_i~~\forall~~i \in \{1,2,\cdots,N\}^m\}$ as in Lemma \ref{lem3.22}.  Therefore, by the Proposition \ref{new917}, we obtain that $ k_{m,r} \le \underline{D}_r(\mu_m^*)$, where $k_{m,r}$ is given by  $ \sum_{ i \in \{1,2,\cdots,N\}^m}(p_i s_{i\tau}^r)^{\frac{ k_{m,r}}{r+k_{m,r}}} =1.$ Let $A_m^*$ be the attractor of the IFS $\mathcal{L}_m$. Note that (Cf. \cite[Theorem $11.6$ ]{GL1}) $0< \dim_H(\mu) \le \underline{D}_r(\mu).$ Hence, $ \underline{D}_r(\mu)> 0.$ Now, we claim that $\underline{D}_r(\mu_m^*) \le \underline{D}_r(\mu) $.
		If  $\underline{\dim}_B(A_m^*)\le \underline{D}_r(\mu)$ is not true, then we may choose another word $\tau =\sigma \omega\in \{1,2,\cdots,N\}^*$ for some $\omega \in \{1,2,\cdots,N\}^*$ such that the attractor $A_m^*$ of the  WIFS $\mathcal{L}_m=\{\mathbb{R}^d:f_{i\tau};~~p_i~~\forall~~i \in \{1,2,\cdots,N\}^m\}$ satisfies $\underline{\dim}_B(A_m^*)\le \underline{D}_r(\mu)$. 
		Now, using \cite[Proposition $11.9$ ]{GL1}, that is, $\underline{D}_r(\mu_m^*) \le \underline{\dim}_B(A_m^*)$ and the condition $\underline{\dim}_B(A_m^*)\le \underline{D}_r(\mu)$, we obtain $\underline{D}_r(\mu_m^*) \le \underline{D}_r(\mu) $. Since  $ k_{m,r} \le \underline{D}_r(\mu_m^*)$, we have $k_{m,r}\leq \underline{D}_r(\mu).$ Now, we will show that $k_r\leq \underline{D}_r(\mu).$
		Suppose for contradiction that $ \underline{D}_r(\mu)< k_r.$ Let $ t_{max}=\max\{p_1s_1^r, p_2s_2^r, \dots,p_{N}s_N^r\}.$ Using $\sum_{ i \in \{1,2,\cdots,N\}^m} (p_is_i^r)^{\frac{k_r}{r+k_r}}=1$ and $\frac{\underline{D}_r(\mu)}{r+\underline{D}_r(\mu)} -\frac{k_r}{r+k_r} < 0$, we have
		\begin{equation*}
			\begin{aligned}
				s_{\tau}^{\frac{- r\cdot k_{m,r}}{r+k_{m,r}}}  &= \sum_{ i \in \{1,2,\cdots,N\}^m} (p_is_i^r)^{\frac{ k_{m,r}}{r+k_{m,r}}}\\ & \ge \sum_{ i \in \{1,2,\cdots,N\}^m} (p_is_i^r)^{\frac{ \underline{D}_r(\mu)}{r+\underline{D}_r(\mu)}} \\&  = \sum_{ i \in \{1,2,\cdots,N\}^m} (p_is_i^r)^{\frac{\underline{D}_r(\mu)}{r+\underline{D}_r(\mu)}}(p_is_i^r)^{\frac{- k_r}{r+k_r}}(p_is_i^r)^{\frac{k_r}{r+k_r}}\\&\ge  \sum_{i \in \{1,2,\cdots,N\}^m} (p_is_i)^{\frac{k_r}{r+k_r}} t_{max}^{m\Big(\frac{\underline{D}_r(\mu)}{r+\underline{D}_r(\mu)} -\frac{k_r}{r+k_r}\Big)}\\& \geq t_{max}^{m\Big(\frac{\underline{D}_r(\mu)}{r+\underline{D}_r(\mu)} -\frac{k_r}{r+k_r}\Big)} 
			\end{aligned}
		\end{equation*}
		This implies that $$s_{\tau}^{\frac{-r k_{r}}{r+k_{r}}} \geq t_{max}^{m\Big(\frac{\underline{D}_r(\mu)}{r+\underline{D}_r(\mu)} -\frac{k_r}{r+k_r}\Big)}. $$ 
		We have a contradiction for large values of $m\in \mathbb{N} $. Therefore, we get $ \underline{D}_r(\mu) \ge  k_r,$ proving the assertion.
	\end{proof}
	
	In the next theorem, we combine above results of the quantization dimension.
	\begin{theorem}\label{thmbilip}
		Let $\mathcal{W}=\{\mathbb{R}^{d};~f_1,f_2,\cdots,f_N;~~p_1,p_2,\cdots,p_N\}$ be a WIFS such that each $f_i$ satisfies
		\begin{align}\label{eqbi}
			s_i\|x-y\|\leq\|f_i(x)-f_i(y)\|\leq c_i\|x-y\|,
		\end{align}where $x,y\in \mathbb{R}^d$ and $0<s_i\leq c_i<1$. Let $\mu$ be the invariant Borel probability measure corresponding to the WIFS $\mathcal{W}$  and assume that $\mathcal{W}$ satisfies SOSC. Then 
		$$k_r\leq \underline{D}_r(\mu)\leq \overline{D}_r(\mu)\leq l_r,$$ where $k_r$ and $l_r$ are given by $\sum\limits_{i=1}^{N}({p_i{s_i}^r})^\frac{k_r}{r+k_r}=1$ and $\sum\limits_{i=1}^{N}({p_i{c_i}^r})^\frac{l_r}{r+l_r}=1$, respectively.
	\end{theorem}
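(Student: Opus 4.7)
The statement is a packaging of two results already established earlier in the section, together with a trivial middle inequality. My plan is to simply assemble them. First, the inequality $\underline{D}_r(\mu)\leq \overline{D}_r(\mu)$ requires no work at all: by definition, $\underline{D}_r(\mu)$ is a liminf and $\overline{D}_r(\mu)$ is a limsup of the same sequence $\frac{\log n}{-\log e_{n,r}(\mu)}$, so this is automatic. The real content lies in the two outer bounds.

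For the upper bound $\overline{D}_r(\mu)\leq l_r$, I would invoke Theorem \ref{themlower} directly. Its hypotheses are exactly the bi-Lipschitz inequality \eqref{eqbi} together with positivity of the probabilities, and in particular no separation condition is needed. That theorem delivers $\limsup_{n\to \infty} n\cdot e_{n,r}^{l_r}(\mu)<\infty$, and then Proposition \ref{prop2.12}(1) (contrapositive) converts this decay rate into the dimension bound $\overline{D}_r(\mu)\leq l_r$, with $l_r$ characterized by $\sum_{i=1}^{N}(p_i c_i^{r})^{l_r/(r+l_r)}=1$.

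For the lower bound $k_r\leq \underline{D}_r(\mu)$, I would invoke Theorem \ref{themupper}. This is exactly where the SOSC assumption enters: it allows one to pass, via Lemma \ref{lem3.22}, to a sub-WIFS $\mathcal{L}_m$ satisfying the strong separation condition, whose invariant measure $\mu_m^*$ has lower quantization dimension at least $k_{m,r}$ (Proposition \ref{new917}). The final step of Theorem \ref{themupper} is a H\"older/Perron-type comparison argument that lets $k_{m,r}$ approach $k_r$ as $m\to \infty$, yielding $k_r\leq \underline{D}_r(\mu)$ with $k_r$ determined by $\sum_{i=1}^{N}(p_i s_i^{r})^{k_r/(r+k_r)}=1$.

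Combining these three pieces gives the desired chain $k_r\leq \underline{D}_r(\mu)\leq \overline{D}_r(\mu)\leq l_r$. Since everything substantive has already been proved, there is no genuine obstacle here; the only minor point to check is that the hypotheses of Theorem \ref{themlower} and Theorem \ref{themupper} literally match those of the present statement, which they do: bi-Lipschitz contractions with constants $0<s_i\leq c_i<1$, positive probabilities, and (for the lower bound only) the SOSC. The proof therefore reduces to one sentence citing each of the two earlier theorems.
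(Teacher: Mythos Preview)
Your proposal is correct and matches the paper's own proof exactly: the paper simply writes ``By combining Theorems \ref{themlower} and \ref{themupper}, we get our required result,'' which is precisely the assembly you describe. The additional commentary you give about the trivial middle inequality and the role of SOSC is accurate but not needed for the formal proof.
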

	\begin{proof}
		By combining Theorems \ref{themlower} and \ref{themupper}, we get our required result.
	\end{proof}
	\begin{remark}
		In \cite{GL2}, Graf and Luschgy gave a formula of the quantization dimension of the invariant self-similar probability measure generated by a self-similar IFS under the open set condition. Here, we give bounds for the quantization dimension of invariant probability measure generated by bi-Lipschitz IFS under the strong open set condition. If we choose $s_i=c_i$ in \eqref{eqbi}, then our result will give the formula of Graf and Luschgy  \cite{GL2}. Thus, our result generalizes the result of  Graf and Luschgy  \cite{GL2} in more general setting.
	\end{remark}
	\begin{remark}
		In \cite{R2}, Roychowdhury determined the quantization dimension of the invariant probability measure supported on the limit set generated by a bi-Lipschitz  IFS with the  strong open set condition and the bi-Lipschitz constants satisfying $\overline{s}_\sigma\leq K \underline{s}_\sigma~~\forall~~ \sigma\in I^*.$ The condition taken by the author is true for similarity mapping but not for the general class of bi-Lipschitz mappings. For example, if we take $\overline{s}_i=\frac{1}{2}, \underline{s}_i=\frac{1}{3}$ for all $1\leq i\leq N,$ then we  cannot find any such $K\in \mathbb{R}$. However, our result Theorem \ref{thmbilip} gives the quantization dimensions of invariant probability measures corresponding to a general class of bi-Lipschitz IFSs.
	\end{remark}
	In the next proposition, we estimate the quantization dimension of the invariant Borel probability measures supported on the graph of the fractal transformation.  
	\begin{proposition}
		Let $\mathcal{F}=\{\mathbb{R}^d; f_1,f_2,\cdots ,f_N\}$ and  $\mathcal{G}=\{\mathbb{R}^d; g_1,g_2,\cdots,g_N\}$ be two IFSs such that $f_i$ and $g_i$ are bi-Lipschitz mappings as follows
		$$c_i \|x-x'\|\leq \|f_i(x)-f_i(x')\|\leq c_i' \|x-x'\|,$$
		$$r_i \|y-y'\|\leq \|g_i(y)-g_i(y')\|\leq r_i' \|y-y'\|,$$
		where $c_i,c_i',r_i,r_i'\in \mathbb{R}$, $0<c_i\leq c_i'<1$ and $0<r_i\leq r_i'<1.$ Assume that $\mathcal{F}$ satisfies SOSC and $\mathcal{C}_\mathcal{F}\prec \mathcal{C}_\mathcal{G}.$ Let $\mathcal{H}=\{\mathbb{R}^d\times \mathbb{R}^d; h_1,h_2,\cdots,h_N\}$ be the IFS with probability vector $(p_1,p_2,\cdots,p_N),$ where $h_i(x,y)=(f_i(x),g_i(y)).$ Let $\mu$ be the invariant Borel probability measure corresponding to the IFS $\mathcal{H}$ such that $\mu(T_\mathcal{FG})=1$. Then 
		$$s_r\leq \underline{D}_r(\mu)\leq \overline{D}_r(\mu)\leq t_r,$$
		where $s_r$ and $ t_r$ are uniquely given by $\sum\limits_{i=1}^{N}(p_i\min\{c_i,r_i\}^r)^\frac{s_r}{r+s_r}=1$ and  $\sum\limits_{i=1}^{N}(p_i\max\{c_i',r_i'\}^r)^\frac{t_r}{r+t_r}=1$, respectively.
	\end{proposition}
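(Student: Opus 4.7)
The plan is to reduce this proposition directly to Theorem \ref{thmbilip} applied to the product WIFS $\mathcal{H}$. The main ingredients to assemble are: (i) two-sided Lipschitz bounds for the maps $h_i$ on $\mathbb{R}^d \times \mathbb{R}^d$ (with the Euclidean norm), (ii) the SOSC for $\mathcal{H}$, and (iii) the identification of $G(T_{\mathcal{FG}})$ with $A_{\mathcal{H}}$ so that $\mu$ is in fact the invariant Borel probability measure supported on the attractor of $\mathcal{H}$.

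First I would verify the bi-Lipschitz estimate for $h_i(x,y)=(f_i(x),g_i(y))$. Using
\[
\|h_i(x,y)-h_i(x',y')\|^2 \;=\; \|f_i(x)-f_i(x')\|^2+\|g_i(y)-g_i(y')\|^2,
\]
the hypotheses on $f_i,g_i$ yield
\[
\min\{c_i,r_i\}^2\bigl(\|x-x'\|^2+\|y-y'\|^2\bigr) \;\le\; \|h_i(x,y)-h_i(x',y')\|^2 \;\le\; \max\{c_i',r_i'\}^2\bigl(\|x-x'\|^2+\|y-y'\|^2\bigr),
\]
so each $h_i$ is bi-Lipschitz on $\mathbb{R}^{2d}$ with lower constant $\min\{c_i,r_i\}$ and upper constant $\max\{c_i',r_i'\}$, both in $(0,1)$. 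Next, since $\mathcal{F}$ satisfies SOSC, part (3) of Lemma \ref{Cond} immediately gives that $\mathcal{H}$ satisfies SOSC as well.

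With these pieces in hand, I would invoke Theorem \ref{thmbilip} on the WIFS $\{\mathbb{R}^{2d};h_1,\dots,h_N;p_1,\dots,p_N\}$: it produces the lower bound $k_r$ determined by $\sum_i(p_i\min\{c_i,r_i\}^r)^{k_r/(r+k_r)}=1$ and the upper bound $l_r$ determined by $\sum_i(p_i\max\{c_i',r_i'\}^r)^{l_r/(r+l_r)}=1$, which are exactly the numbers called $s_r$ and $t_r$ in the statement. Finally, Theorem \ref{thmbilip} applies to the invariant measure of $\mathcal{H}$ viewed as a measure on $\mathbb{R}^{2d}$, which is supported on $A_{\mathcal{H}}$; the hypotheses $\mathcal{F}\in\text{SOSC}$ and $\mathcal{C}_{\mathcal{F}}\prec\mathcal{C}_{\mathcal{G}}$ combined with \cite[Theorem 6.8]{And} (used in the same way as in Proposition \ref{bigr}) identify $A_{\mathcal{H}}$ with $G(T_{\mathcal{FG}})$, which is consistent with the assumption $\mu(G(T_{\mathcal{FG}}))=1$ and shows that $\mu$ coincides with the invariant measure of $\mathcal{H}$ (the invariant measure supported on $A_{\mathcal{H}}$ is unique).

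The only delicate point is the verification that the two-sided bound really produces the constants $\min\{c_i,r_i\}$ and $\max\{c_i',r_i'\}$ under the Euclidean norm; this is genuinely the bound one gets (rather than, say, some geometric mean), and it is what allows the product IFS to fit seamlessly into the framework of Theorem \ref{thmbilip}. Apart from this, the argument is essentially bookkeeping: the substantive content has already been done in Lemma \ref{Cond}, Theorem \ref{thmbilip}, and the Vince-type identification of the graph of $T_{\mathcal{FG}}$ with $A_{\mathcal{H}}$.
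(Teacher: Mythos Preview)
Your proposal is correct and follows essentially the same route as the paper: identify $G(T_{\mathcal{FG}})$ with $A_{\mathcal{H}}$ via \cite[Theorem 6.8]{And}, transfer SOSC to $\mathcal{H}$ by Lemma \ref{Cond}(3), establish the bi-Lipschitz constants $\min\{c_i,r_i\}$ and $\max\{c_i',r_i'\}$ for the $h_i$ (the paper cites Lemma \ref{sibi}(2) for this, you compute it directly with the Euclidean norm, which is arguably cleaner since Theorem \ref{thmbilip} is stated in $\|\cdot\|$), and then apply Theorem \ref{thmbilip}.
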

	\begin{proof}
		Since $\mathcal{F}$ satisfies SOSC and $\mathcal{C}_\mathcal{F}\prec \mathcal{C}_\mathcal{G}$, by using \cite[Theorem 6.8]{And}, the graph of the fractal transformation $T_\mathcal{FG}$ is same as the attractor of the IFS $\mathcal{H}.$ Using Theorem \ref{thmbilip}, part $(3)$ of Lemma \ref{Cond} and part $(2)$ of Lemma \ref{sibi}, we get our required result.
	\end{proof}


	\bibliographystyle{amsplain}

\end{document}